\newcommand{\dbar}{\ensuremath{\overline\partial}}
\shorttitle}
\@nx\MakeUppercase{\the\toks@}}
\authors}
\newcommand{\sumprime}{\if@display\sideset{}{'}\sum%
	\else\sum'\fi}
\begin{document}

	\begin{sloppypar}	
		
		\numberwithin{equation}{section}
		
		% define theorem environments
		\newtheorem{theorem}{Theorem}[section]
		\newtheorem{proposition}[theorem]{Proposition}
		\newtheorem{conjecture}[theorem]{Conjecture}
		\def\theconjecture{\unskip}
		\newtheorem{corollary}[theorem]{Corollary}
		\newtheorem{lemma}[theorem]{Lemma}
		\newtheorem{observation}[theorem]{Observation}
		\newtheorem{definition}{Definition}
		\numberwithin{definition}{section} %\def\thedefinition{\unskip}
		\newtheorem{remark}{Remark}
		\def\theremark{\unskip}
		\newtheorem{kl}{Key Lemma}
		\def\thekl{\unskip}
		\newtheorem{question}{Question}
		\def\thequestion{\unskip}
		\newtheorem{example}{Example}
		\def\theexample{\unskip}
		\newtheorem{problem}{Problem}
		
		\bigskip
		
		\address{DEPARTMENT OF MATHEMATICAL SCIENCES, NORWEGIAN UNIVERSITY OF SCIENCE AND TECHNOLOGY, NO-7491 TRONDHEIM, NORWAY}
		\email{yanhe@posteo.de}
		\email{johannes.testorf@ntnu.no}
		\email{xu.wang@ntnu.no}
		
		\title{Ross-Witt Nystr\"om correspondence and Ohsawa-Takegoshi extension}
		
		\author{Yan He, Johannes Testorf and Xu Wang}
		\date{\today}
		
		\thanks{It is a pleasure to thank Bo Berndtsson for
			his suggestions for a better presentation of our results and David Witt Nystr\"om for his comment on a possible simplification of our main theorem which leads us to the $\mathbb C^*$-degeneration approach. We also thank Tam\'as Darvas for his valuable comments on our canonical growth condition lower bound, and Mingchen Xia for finding several inaccuracies in an early version of this paper.}
		
		\subjclass[2020]{14C30, 14C20}

		\begin{abstract} We obtain a general Ohsawa-Takegoshi extension theorem by using the Ross-Witt Nystr\"om correspondence picture and Berndtsson's theorem in \cite{Bern20}. In the test configuration ($\mathbb C^*$-degeneration) case, our approach gives a quick proof of the Ohsawa-Takegoshi extension theorem without taking limit or using singular weight, which is very different from the Ohsawa-Chen-Blocki-Guan-Zhou approach and the Berndtsson-Lempert approach. Another advantage of our approach is that it fits better to the sharp estimate for the weighted Bergman kernel on  compact manifold. Applications include a sharp lower bound of the Bergman kernel for compact Riemann surfaces and a non-vanishing theorem in terms of (weighted) Okounkov bodies.
		\end{abstract}

		\maketitle

		\tableofcontents

		\section{Introduction}
		
		In \cite{OT87} Ohsawa and Takegoshi proved the following result.

		\begin{theorem}\label{th:OT} Let $\phi$ be a plurisubharmonic (psh) function on a bounded pseudoconvex domain $\Omega\subset\mathbb C^n$. Then for any holomorphic function $f$ defined on $\Omega\cap H$, where $H$ is a hyperplane, there exists a holomorphic function $F$ in $\Omega$ such that $F=f$ on $\Omega\cap H$ and 
			$$
			\int_{\Omega} |F|^2 e^{-\phi} \leq C \int_{\Omega\cap H} |f|^2 e^{-\phi}, 
			$$
			where $C$ depends only on the diameter of $\Omega$ and we omit the Lebesgue measure in the integrals.
		\end{theorem}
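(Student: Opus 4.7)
The plan is to transfer the extension problem into a positivity statement for a direct image bundle via the $\mathbb C^*$-degeneration attached to the hyperplane $H$, and then read the Ohsawa-Takegoshi estimate off Berndtsson's curvature theorem from \cite{Bern20}.

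After translating I may assume $H = \{z_n = 0\}$ and that $\mathrm{diam}(\Omega)$ is bounded. For $\tau \in \mathbb C^*$ I would define $\Omega_\tau := \{(z', z_n) : (z', \tau z_n) \in \Omega\}$ with weight $\phi_\tau(z', z_n) := \phi(z', \tau z_n)$; this is the $\mathbb C^*$-orbit picture of the test configuration associated to $H$ in the Ross-Witt Nystr\"om correspondence. Assembling the fibers into the total space
$$
\mathcal X := \{(z', z_n, \tau) \in \mathbb C^n \times \mathbb C^* : (z', \tau z_n) \in \Omega\},
$$
I would first verify that $\mathcal X$ is pseudoconvex and that $\Phi(z, \tau) := \phi(z', \tau z_n)$ is psh on it; the diagonal $\mathbb C^*$-action $(z', z_n, \tau) \mapsto (z', \lambda z_n, \lambda^{-1} \tau)$ preserves this data and degenerates the central fiber to $(\Omega \cap H) \times \mathbb C_{z_n}$ with weight $\phi(z', 0)$.

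Applying Berndtsson's theorem to the direct image bundle $E \to \mathbb C$ whose fiber over $\tau \neq 0$ is the weighted Bergman space $A^2(\Omega_\tau, e^{-\phi_\tau})$, every holomorphic section $s$ of $E$ has $-\log \|s_\tau\|^2$ plurisubharmonic in $\tau$. Given $f \in A^2(\Omega \cap H, e^{-\phi})$, I would construct a holomorphic section $s$ whose value at $\tau = 0$ is $f(z')$ tensored with a normalized Gaussian $e^{-|z_n|^2/2}$, and set $F := s_1 \in A^2(\Omega, e^{-\phi})$. The $\mathbb C^*$-equivariance forces $-\log \|s_\tau\|^2$ to be a convex function of $\log|\tau|$ up to an explicit linear correction from the Jacobian of the action, and the desired inequality $\|F\|^2 \leq C \|f\|^2$ emerges as the resulting comparison between $\tau = 1$ and $\tau \to 0$, with $C$ depending only on the Gaussian normalization and the diameter of $\Omega$.

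The principal obstacle is the behavior of $E$ at $\tau = 0$: the fibers degenerate from bounded $n$-dimensional domains to the unbounded product $(\Omega \cap H) \times \mathbb C$, so both the definition of $E_0$ and the assertion that ``values at $0$'' recover restrictions to $H$ require care. This is precisely the bookkeeping that the Ross-Witt Nystr\"om correspondence supplies, pinning down the correct central-fiber weight so that the limiting integral $\int_{\Omega_0} |f(z') g(z_n)|^2 e^{-\phi(z',0) - |z_n|^2}$ collapses to $\int_{\Omega \cap H} |f|^2 e^{-\phi}$. Once this identification is in place, Berndtsson's positivity does the analytic work and the Ohsawa-Takegoshi estimate falls out as a one-variable convexity fact.
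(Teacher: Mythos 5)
Your architecture is the paper's own: this is exactly the $\mathbb C^*$-degeneration of $\Omega$ to the normal bundle of $\Omega\cap H$ (Example 3 and Theorem B / the Second Main Theorem), with Berndtsson positivity of the direct image giving monotonicity of the minimal extension norm in $|\tau|$. Two corrections on the mechanism, though: the positivity input is \cite{Bern06}/\cite{BP08}, not \cite{Bern20} (which is the finite-dimensional monotonicity result used for the First Main Theorem), and it must be run through holomorphic sections of the \emph{dual} bundle (the functionals $\xi_f$, as in \cite{BL16} and in the paper's proof of the Second Main Theorem) or through the Bergman kernel: for a higher-rank positively curved direct image it is $\log\|\xi\|_{*\tau}^2$ that is subharmonic for dual sections, not ``$-\log\|s_\tau\|^2$ psh for holomorphic sections of $E$''; the quantity you actually need to be decreasing in $|\tau|$ is the minimal norm of extensions, which is a supremum over the dual, not the norm of a single section you would still have to construct.

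The genuine gap is at the central fiber. With your family weight $\Phi(z,\tau)=\phi(z',\tau z_n)$, the fiber at $\tau=0$ is $(\Omega\cap H)\times\mathbb C_{z_n}$ with weight $\phi(z',0)$, which does not depend on $z_n$; hence $\int_{\Omega_0}|f(z')g(z_n)|^2e^{-\phi(z',0)}=\infty$ for every nonzero candidate, the weighted Bergman space over $\tau=0$ is $\{0\}$, and the comparison between $\tau=1$ and $\tau\to0$ is vacuous. The Gaussian $e^{-|z_n|^2/2}$ you insert at $\tau=0$ is not produced by your construction, and the Ross--Witt Nystr\"om correspondence cannot supply it: the central-fiber weight is forced to be the limit of the total-space weight. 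The repair --- and the precise point where the diameter enters --- is to modify the total-space data by an $S^1$-invariant term in the fiber direction: e.g.\ take $\Phi(z,\tau)=\phi(z',\tau z_n)+|z_n|^2$ (psh and invariant under $(z_n,\tau)\mapsto(e^{i\theta}z_n,e^{-i\theta}\tau)$), so that $\phi_0=\phi(z',0)+|z_n|^2$ and the $z_n$-integral contributes $\pi$, while at $\tau=1$ one pays only $e^{-|z_n|^2}\geq e^{-D^2}$ on $\Omega$ after translating so that $|z_n|\leq D=\mathrm{diam}\,\Omega$; alternatively intersect the total space with the $S^1$-invariant pseudoconvex set $\{|z_n|<D\}$, making the central fiber $(\Omega\cap H)\times\{|z_n|<D\}$. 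Either variant yields $\int_\Omega|F|^2e^{-\phi}\leq C(D)\int_{\Omega\cap H}|f|^2e^{-\phi}$, and this auxiliary fiber weight is exactly the role played by the toric $\psi$ (and the hypothesis $\int_{\mathbb C^n}e^{-\phi_0}<\infty$) in the paper's Theorem B and Theorem \ref{th:Ohsawa}. Without such a modification your degeneration argument collapses at $\tau=0$.
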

		
		The original proof in \cite{OT87} is to use the twisted H\"ormander $L^2$-estimate with respect to a $\log|z|^2$-type singular weight $G$ (or singular potential of a metric on a line bundle). This singular weight approach is further developed in \cite{Bern96, Bern05, BCP24, Blo13, Blo15, CDM17, CWW15, Dem92, Dem18, DK01, Guan19, GZ15a, GZ15b, Kim10, Kim21, KS23, Lem17, Ma18, MV07, MV19, MV17, Ohs88, Ohs95, Ohs01, Siu98, ZZ19}, to cite just a few. Inspired by \cite{Bern96}, Chen \cite{Chen11} found a simple proof using the original (non-twisted) H\"ormander $L^2$-estimate. The third approach introduced by Berndtsson-Lempert \cite{BL16} is to apply the Berndtsson convexity theorem \cite{Bern06, Bern09} to a family of weights (each $u^\lambda:=\{u^\lambda_t\}_{t>0}$ below is called a psh subgeodesic ray, see Definition \ref{de:sg})  
		\begin{equation}\label{eq:utlambda}
			u^\lambda_t:=\lambda \max\{G+t, 0\} \ \text{(the weight $G\in {\rm PSH}(X, \phi) \leq 0$)},  \ \ t>0,
		\end{equation} 
		and then let 
		$\lambda\to \infty$ to get the convexity theorem for the (non-product) family of sublevel sets $\{G<-t\}$. Thus only the non-singular (note that $u_t\geq 0$) weight version of H\"ormander $L^2$-estimate is used (to get the Berndtsson convexity) there. Another advantage of this new approach is that the sharp $L^2$-estimate of the extension can be obtained for free, which in particular gives a simple proof of the Suita conjecture \cite{Su72} on the sharp Robin constant bound for the Bergman kernel (see \cite{Blo13, GZ15a} for the original proof). The Berndtsson-Lempert approach has been further developed recently (see the references in \cite{Ngu23}). Among them, the idea in \cite{Al24, NW24, Wang23} is to use a single subgeodesic ray $u_t^\lambda$ ($\lambda>0$ is fixed!).
		Thus the original product version of the Berndtsson convexity theorem \cite{Bern09, Bern06} is enough to get the Ohsawa-Takegoshi extension. Intuitively one may expect a better estimate of the extension using geodesic rays (since the Berndtsson curvature formula in \cite{Bern09} contains a geodesic curvature term), but unfortunately the subgeodesic ray $u_t^\lambda$ is almost never a geodesic ray. 
		
		In this paper, inspired by \cite{Bern20, DZ22}, we shall generalize the Berndtsson-Lempert approach to all sub-geodesic rays (in particular, all geodesic rays), see the \textbf{First Main Theorem} in the next section. The idea is as follows: first we use a general sub-geodesic ray $u_t$, see Definition \ref{de:sg}, in ${\rm PSH}(X,\phi)$ to define a family of norms 
		$$
		||F||^2_t:=\int_{X} i^{n^2} F\wedge \bar F \, e^{-\phi-u_t}, \ \  t\geq 0, \ \text{with}\ ||F||^2_0:=\int_{X} i^{n^2} F\wedge \bar F \, e^{-\phi},
		$$ 
		for $F\in H^0(X, \mathcal O(K_X+L))$. By Theorem 1.1 in \cite{Bern20} (see the proof of Proposition \ref{pr:BL-alpha} below), for each subspace
		\begin{equation}\label{eq:salpha-intro}
			S_\alpha:=\left\lbrace F\in H^0(X, \mathcal O(K_X+L)): \int_{0}^\infty ||F||_t^2 \,e^{t\alpha} \,dt<\infty  \right\rbrace, 
		\end{equation}
		the associated quotient norm $e^{\alpha t}||[F]||^2_t $ for $[F]\in H^0(X, \mathcal O(K_X+L))/S_\alpha$ is increasing in $t$. Thanks to Lemma 4.3 in \cite{DX22}, we further find that
		\begin{equation}\label{eq:s'alpha1-intro}
			S_\alpha = H^0(X, \mathcal O(K_X+L)\otimes \mathcal I(v_\alpha)), 
		\end{equation}	
		where each singular weight $v_\alpha$ is defined by the following partial Legendre transform:
		\begin{equation}\label{eq:partial-legendre-intro}
			v_\alpha:=\inf_{t>0}\{u_t-t\alpha\}.
		\end{equation}	
		Thus in case $\mathcal I(v_\alpha)\subset \mathcal I_Y$, the above increasing property gives an extension $F_\alpha$ of $F|_Y$ with 
		\begin{equation}\label{eq:estimate-intro}
			||F_\alpha||_0^2 \leq  \liminf_{t\to \infty}e^{\alpha t}||F||^2_t.
		\end{equation}	
		The family of singular weights $v_\alpha$ is called an (analytic) test curve in \cite{RW14}. By the Ross-Witt Nystr\"om correspondence, the partial Legendre transform \eqref{eq:partial-legendre-intro} gives a bijection between the space of subgeodesic rays and the space of test curves. Thus one may start from an arbitrary test curve $v_\alpha$ (which is very easy to construct) instead and then use the Legendre transform $\sup_{\alpha} \{v_\alpha+t\alpha\}$ to define the corresponding subgeodesic ray. For example, in case $v_\alpha:=\alpha G$ for $0<\alpha\leq \lambda$ is a linear test curve, we recover the subgeodesic ray $u_t^\lambda$ in \eqref{eq:utlambda} since
		$$
		\lambda \max\{G+t, 0\} = \sup_{0<\alpha\leq \lambda} \{\alpha G+\alpha t\}.
		$$ 
		Thus the Berndtsson-Lempert method reduces to the linear test curve case of our Ross-Witt Nystr\"om correspondence approach. We believe that this non-linear generalization is  interesting since in most cases the better estimate comes from a non-linear one. One example is the following optimal version of Theorem A in \cite{Wang23}. 
		
		\medskip
		
		\noindent
		\textbf{Theorem A.} \emph{Let $(L, e^{-\phi})$ be a positive line bundle over a compact Riemann surface $X$. Denote by $ {\rm Ric}\,\omega:=i\dbar\partial \log \omega$ the Ricci form of the K\"ahler form $\omega:= i\partial\dbar \phi$. Assume that 
			$$ {\rm Ric}\,\omega\leq \omega,  \ \ \ L_0 \geq 2\pi,
			$$
			where $L_0$ denotes the infimum of the length of closed geodesics in $X$, 
			then 
			\begin{equation}\label{eq:TheoremA}
				\sup_{f\in H^0(X, K_X+L)}  \frac{i\,f \wedge \overline{f}\, e^{-\phi}}{\int_X   i\,f \wedge \overline{f}\, e^{-\phi}} \geq \frac{\omega}{4\pi}, \ \ \text{in particular}\ \dim H^0(X, K_X+L) \geq \frac{\deg L}{2}.
		\end{equation}}
		
		\medskip
		\noindent
		\textbf{Remark.} \emph{Note that the second inequality in \eqref{eq:TheoremA} is an equality in case $\phi=2\log(1+|z|^2)$, $L=-K_X$ and $X=\mathbb P^1$. Thus our estimate is sharp. Theorem A is very different from the Suita conjecture, which is mainly for the unweighted Bergman kernel on open Riemann surfaces. As far as we know, the general weighted case and the compact case of the Suita conjecture (even the statement!) are widely open, partial results include \cite{Ho19, Kik23, Ina22}).}

		\medskip
		
		The Ross-Witt Nystr\"om correspondence \cite{RW14} is originally used to generalize the space of test configurations (or $\mathbb C^*$-degenerations). Thus it is natural to ask whether there is a direct $\mathbb C^*$-degeneration approach to the Ohsawa-Takegoshi extension theorem. One attempt is given in \cite{NW22}, where a $\mathbb C^*$-degeneration type geodesic ray is used to solve a question of Ohsawa \cite{Ohs17} on  the Berndtsson-Lempert proof of the following theorem in \cite{Ohs17, YLZ22}.

		\begin{theorem}\label{th:Ohsawa} Let $\phi$ be a psh function on  a pseudoconvex domain  $\Omega\subset\mathbb C^n$. Assume that $0\in \Omega$. Then for every toric psh function $\psi$ (toric means that $\psi$ depends only on $|z_1|, \cdots, |z_n|$) on $\mathbb C^n$, there exists a holomorphic function $f$ on $\Omega$ with 	\begin{equation}\label{eq:Ohsawa}
				f(0)=1, \ \  \int_{\Omega}|f|^2 e^{-\phi-\psi} \leq  \int_{\mathbb C^n} e^{-\phi(0)-\psi} .
			\end{equation}						
		\end{theorem}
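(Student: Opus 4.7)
The plan is to apply the First Main Theorem (stated in the next section) to the trivial line bundle over $X=\Omega$ equipped with weight $\phi+\psi$, with $Y=\{0\}$ and the $\mathbb{C}^*$-degeneration subgeodesic ray arising from the toric structure of $\psi$, namely
\begin{equation*}
u_t(z):=\psi(e^{t/2}z)-\psi(z),\qquad t\geq 0.
\end{equation*}
Because $\psi$ is toric, $\psi(e^{\tau/2}z)$ equals $\psi(e^{\mathrm{Re}\,\tau/2}z)$ and is the composition of the plurisubharmonic $\psi$ with the holomorphic map $(\tau,z)\mapsto e^{\tau/2}z$, hence jointly psh in $(\tau,z)$. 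Consequently $u_t+\phi+\psi=\phi(z)+\psi(e^{t/2}z)$ is a valid subgeodesic ray in $\mathrm{PSH}(\Omega,\phi+\psi)$ with $u_0=0$.

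Take $F=dz_1\wedge\cdots\wedge dz_n$, so that extending $F|_{\{0\}}$ means producing $f\in\mathcal{O}(\Omega)$ with $f(0)=1$. The associated test curve $v_\alpha(z):=\inf_{t>0}\{u_t(z)-t\alpha\}$ satisfies, at $\alpha=n$ and $t=-2\log|z|$ (valid for $|z|<1$),
\begin{equation*}
v_n(z)\leq \psi(z/|z|)-\psi(z)+2n\log|z|\leq 2n\log|z|+C
\end{equation*}
near the origin, where $C$ bounds $\psi$ on the unit sphere (by upper semicontinuity) minus $\psi(0)$ (the toric psh $\psi$ attains its minimum at $0$). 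Hence $e^{-v_n}\gtrsim|z|^{-2n}$ is not locally integrable at $0\in\mathbb{C}^n$, giving $1\notin\mathcal{I}(v_n)_0$, i.e.\ $\mathcal{I}(v_n)\subset\mathcal{I}_{\{0\}}$. The First Main Theorem then produces an extension $F_n=f_n\,dz_1\wedge\cdots\wedge dz_n$ with $f_n(0)=1$ and
\begin{align*}
c_n\int_\Omega |f_n|^2 e^{-\phi-\psi}=\|F_n\|_0^2
&\leq\liminf_{t\to\infty}e^{nt}\|F\|_t^2\\
&=\liminf_{t\to\infty}c_n\int_{e^{t/2}\Omega}e^{-\phi(e^{-t/2}w)-\psi(w)}\,dV(w)
\end{align*}
by the substitution $w=e^{t/2}z$.

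The main obstacle is the one-sided bound
\begin{equation*}
\liminf_{t\to\infty}\int_{e^{t/2}\Omega}e^{-\phi(e^{-t/2}w)-\psi(w)}\,dV(w)\leq\int_{\mathbb{C}^n}e^{-\phi(0)-\psi(w)}\,dV(w),
\end{equation*}
since naive Fatou gives the reverse inequality. I would first reduce to $\phi$ smooth via the standard decreasing psh approximation on pseudoconvex $\Omega$ (combined with an exhaustion by relatively compact pseudoconvex subdomains, which also licenses the First Main Theorem in this open setting). Continuity of $\phi$ at $0$ then gives pointwise convergence $\phi(e^{-t/2}w)\to\phi(0)$, while a lower bound $\phi\geq c$ on a ball $B_R(0)\subset\Omega$ supplies, for each fixed $w$ and $t$ large enough (so that $e^{-t/2}w\in B_R$), the integrable dominator $e^{-c-\psi(w)}$, so reverse Fatou yields the bound. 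Passing back from the smooth case to general psh $\phi$ through the approximation completes the proof.
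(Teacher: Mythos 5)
The core of your plan — the subgeodesic $u_t(z)=\psi(e^{t/2}z)-\psi(z)$, the multiplier-ideal computation showing $e^{-v_n}$ is non-integrable at $0$ (hence $\mathcal I(v_n)\subset\mathcal I_{\{0\}}$), and the rescaled limit computation with the reduction to bounded $\Omega$ and smooth $\phi$ — is sound and is essentially the computation the paper carries out in its proof of Theorem \ref{th:C-deg}. The genuine gap is the step you pass over in one clause: invoking the First Main Theorem "in this open setting". The First Main Theorem is stated and proved only for a positive line bundle over a \emph{compact} manifold, because its proof rests on Berndtsson's filtration theorem \cite[Theorem 1.1]{Bern20}, which requires $H^0(X,K_X+L)$ to be finite dimensional. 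Exhausting $\Omega$ by relatively compact pseudoconvex subdomains does not repair this: the relevant spaces of $L^2$ holomorphic functions on those subdomains are still infinite dimensional, so the monotonicity of the quotient norms (Proposition \ref{pr:BL-alpha}) is not available. The paper is explicit about this obstruction: its non-compact substitute, Theorem \ref{th:noncompact}, applies only to quasi-linear test curves (Definition \ref{de:quasilinear}), and the test curve $v_\alpha=\inf_{t>0}\{\psi(e^{t/2}\cdot)-\psi-t\alpha\}$ attached to a general toric $\psi$ is not quasi-linear (already $\psi=\log(|z_1|^2+|z_2|^4)$ fails, since $\lambda_v v_\beta-\beta v_{\lambda_v}$ is unbounded near $\{z_2=0\}$); the authors state that the non-quasi-linear case on pseudoconvex domains is open. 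So as written your argument rests on an unproved extension of the First Main Theorem. Two smaller points: for $\psi$ of super-logarithmic growth your ray has $\lambda_u=\infty$, so $v\notin{\rm TC}_\phi$ without truncating at some $\lambda>n$; and your dominating function $e^{-c-\psi(w)}$ is only valid for $t\geq t(w)$, so reverse Fatou needs the uniform bound $\phi\geq c$ on all of $\Omega$, which is exactly what the bounded-$\Omega$, smooth-$\phi$ reduction provides — state it that way.

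The fix is to bypass the First Main Theorem altogether, as the paper does. Since you only need the Bergman kernel at the single point $0$, Berndtsson's convexity theorem \cite[Theorem 1.1]{Bern06} on the pseudoconvex domain $\Omega$ (no finite-dimensionality needed) applied to the weights $\varphi_t=\phi+\psi(e^{t/2}\cdot)$ shows $\log K_t(0)$ is convex in $t$; your limit computation shows $-nt+\log K_t(0)$ is bounded above as $t\to\infty$, hence decreasing, and evaluating at $t\to\infty$ with the candidate $F=1$ gives exactly \eqref{eq:Ohsawa} — this is the paper's proof of Theorem \ref{th:C-deg}, which contains Theorem \ref{th:Ohsawa}. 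The paper's preferred route is even shorter: Theorem B applied to $\phi_{\mathbb C}(\xi,s)=\phi(s\xi)+\psi(\xi)$ on the pseudoconvex total space $\Omega_{\mathbb C}=\{(\xi,s):s\xi\in\Omega\}$, where $S^1$-invariance plus subharmonicity of $\log K_s(0)$ in $s$ compares the fibers $s=1$ and $s=0$ directly, with no limits and no singular weights.
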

		
		\noindent
		\textbf{Remark.} \emph{It is known that the toric assumption on $\psi$ can not be removed, see \cite{Guan20, YLZ22} for counterexamples in the $n=1$ case.}   
		
		\medskip
		
		In order to find a general $\mathbb C^*$-degeneration approach, we first translate (and generalize) the above theorem to the following version.

		\medskip
		\noindent
		\textbf{Theorem B.} \emph{Let $\Omega$ be a pseudoconvex domain in $\mathbb C^n$ with $0\in \Omega$. Let $\phi_{\mathbb C}$ be a psh function on 
			\begin{equation}\label{eq:B1}
				\Omega_{\mathbb C}:=\{(\xi, s)\in \mathbb C^n\times\mathbb C: s\xi\in \Omega\}.
			\end{equation} 
			Assume that 
			\begin{equation}\label{eq:B2}
				\phi_{\mathbb C}(e^{i\theta}\xi, e^{-i\theta} s)=\phi_{\mathbb C}(\xi, s), \ \forall \ (\xi, s)\in\Omega_{\mathbb C}, \ \theta\in\mathbb R.
			\end{equation}
			Put $\phi_s(\xi):=\phi_{\mathbb C}(\xi,s)$. Then there exists a holomorphic function $f$ on $\Omega$ with 	\begin{equation}\label{eq:B3}
				f(0)=1, \ \  \int_{\Omega}|f|^2 e^{-\phi_1} \leq  \int_{\mathbb C^n} e^{-\phi_0}.
		\end{equation}}

		\begin{proof} If $\int_{\mathbb C^n} e^{-\phi_0}=\infty$ then it suffices to take $f=1$. Thus one may assume that$\int_{\mathbb C^n} e^{-\phi_0}<\infty$. The main observation is that $\Omega_{\mathbb C}$ is pseudoconvex (if $E$ is a psh exhaustion function on $\Omega$ then $E(s\xi)+|s|^2+|\xi|^2$ defines a psh exhaustion function on $\Omega_{\mathbb C}$). Denote by $K_s(0)$ the Bergman kernel at $0\in \Omega_s:=\Omega_{\mathbb C} \cap \{\mathbb C^n\times s\}$ with respect to the weight $\phi_s$. Then Theorem 1.1 in \cite{Bern06} implies that $\log K_s(0)$ is subharmonic in $s\in\mathbb C$. By \eqref{eq:B2}, we know that $\log K_s(0)$ depends only on $|s|$ (thus must be increasing with respect to $|s|$ by the submean inequality) and 
			$
			K_0(0)= \frac1{\int_{\mathbb C^n} e^{-\phi_0}},
			$
			from which \eqref{eq:B3} follows.	
		\end{proof}
		
		Note that Theorem B reduces to Theorem \ref{th:Ohsawa} in case $\phi_{\mathbb C}(\xi,s)=\phi(s\xi)+\psi(\xi)$. However the proof above is much shorter comparing with \cite{NW22}. The main difference is that we  apply the non-product ($\Omega_{\mathbb C}$ is not a product domain in case $\Omega\neq \mathbb C^n$) version of Berndtsson's theorem directly to $\Omega_{\mathbb C}$, which contains the zero fiber. In this way we can directly compare the zero fiber $\Omega_0$ and $\Omega_1=\Omega$ (without taking limit or using singular weight!).  We call Theorem B a $\mathbb C^*$-degeneration version of the Ohsawa-Takegoshi extension theorem since the natural mapping
		\begin{equation}\label{eq:B-mu}
			\mu: \Omega_{\mathbb C} \to \Omega \times \mathbb C 
		\end{equation}
		defined by $\mu(\xi,s):=(s\xi, s)$ is an admissible $\mathbb C^*$-degeneration of $\Omega$ to $0\in \Omega$ (see Definition \ref{de:cstar-deg-Y}). Our \textbf{Second Main Theorem} in the next section is a generalization of Theorem B to an arbitrary admissible $\mathbb C^*$-degeneration with almost Stein total space. Its proof is similar to the above proof of Theorem B (apply \cite[section 2]{BP08} instead of \cite{Bern06}). Thanks to the recent iterated toric degeneration construction \cite{DRWXZ,Mu20} of the Chebyshev transform in \cite{WN14}, our second main theorem can be used to prove the following result.

		\medskip
		\noindent
		\textbf{Theorem C.} \emph{Let $L$ be a positive line bundle over a compact complex manifold $X$. Fix $x\in X$, assume that there exists a complete flag of submanifolds
			$$
			X=Y_0 \supset Y_1\supset \cdots Y_{n-1}\supset Y_n=\{x\}
			$$
			such that the associated Okounkov body of $L$ (see \eqref{eq:Okoun1}) contains  $(1,\cdots, 1)$ as an interior point. Then $K_X+L$ has a global holomorphic section non-vanishing at $x$.}

			\medskip
		
		\noindent
		\textbf{Remark.} \emph{On the other hand, very ampleness of $L$ also implies that we can choose a flag such that the Okounkov body is a special simplex, see \cite[Proposition 2.6]{WN19}. One may also look at the infinitesimal flags and similarly prove the following result.}

		\medskip
		\noindent
		\textbf{Theorem D.} \emph{Let $L$ be a positive line bundle over a compact complex manifold $X$. Fix $x\in X$, assume that there exists a  weighted infinitesimal Okounkov body of $L$ at $x$ (see Definition \ref{de:betaorder}) containing $(1,\cdots, 1)$ as an interior point, then $K_X+L$ has a global holomorphic section non-vanishing at $x$.}
		
		\medskip
		
		If we apply (a jet version of) Theorem D to the generic classical (unweighted) infinitesimal Okounkov bodies then the recent result \cite[Theorem 1.2]{FL25} of Fulger and Lozovanu on infinitesimal successive minima gives:
		
		\medskip
		\noindent
		\textbf{Theorem E.} \emph{Let $L$ be a positive line bundle over an $n$-dimensional compact complex manifold $X$. Fix $x\in X$, $1\leq j\leq n$, let (see \cite{FL25}) 
			\begin{equation}\label{eq:Seshadri-jth}
				\epsilon_j(L;x):=\min\{t\geq 0: {\rm dim} (B_+(\mu^*L-tE) \cap E) \geq j-1\}, \ \dim\emptyset:=-\infty,
			\end{equation} 
		be the $j$-th infinitesimal successive minima of $L$ at $x$, where $\mu$ denotes the blow-up of $x$ with exceptional divisor $E$ and $B_+$ denotes the augmented base locus (or non-K\"ahler locus by \cite[Theorem 6.3]{Tos18}). If
			\begin{equation}\label{eq:ThmD}
				\frac{k+1}{\epsilon_1(L;x)}+ \cdots + \frac1{\epsilon_n(L;x)}<1
			\end{equation}
			then global holomorphic sections of $K_X+L$ generate all $k$-jets at $x$.} 
		
		\medskip
		
		\noindent
		\textbf{Remark.} \emph{From \eqref{eq:Seshadri-jth}, we know that 
			$$
			\epsilon_1(L;x) \leq \cdots \leq \epsilon_n(L;x)
			$$
			and $\epsilon_1(L;x)$ is equal to the classical Seshadri constant of $L$ at $x$. Hence the above theorem is better than the classical Seshadri type non-vanishing result \cite[Proposition 6.8 (a)]{Dem92}. The proof of Theorem D also gives a lower bound of the Bergman kernel in terms of the canonical growth condition \cite{WN18} and (the Legendre transform of)  the Chebyshev tranform \cite{WN14}, see Section \ref{se:toric} for the details and related estimates for pseudoconvex domains.}
		
		%\medskip
		%The idea in \cite{WN19} also suggests the following K\"ahler embedding version of Theorem C, D. First, let us introduce the following 
		 %\medskip
		 %\noindent
		 %\textbf{Third Main Theorem.} \emph{NO! K\"ahler embedding only gives Seshadri type critetion!}

		\medskip

		%We shall use the compact case of our second main theorem and the Chebyshev transform construction in \cite{WN14} to study questions related to the Fujita conjecture on effective very ampleness of adjoint bundles. Our first observation is the following corollary of  our second main theorem.  

		\bigskip
		
		\noindent
		\textbf{List of notations.}
		
		\noindent
		
		\medskip
		
		\begin{itemize}
			
			\item[(0)] $\mathbb Z$ (resp. $\mathbb N$): set of (resp. non-negative) integers;
			
			\item[(1)] psh: plurisubharmonic;
			
			\item[(2)] usc: upper semi-continuous; 
			
			\item[(3)] $(L,e^{-\phi})$ positive (reps. pseudoeffective): $\phi$ is smooth and $dd^c\phi:=\frac{i\partial\dbar
			\phi}{2
			\pi}>0$ (resp. $\phi$ is psh);
			
			\item[(4)] $\rho\in {\rm PSH}(X, \phi)$: $\rho+\phi$ is psh;
			
			\item[(5)] ${\rm SR}_\phi$: special subgeodesic ray space in ${\rm PSH}(X, \phi)$ --- Definition \ref{de:srphi};
			
			\item[(6)] ${\rm TC}_\phi$: special test curve space in ${\rm PSH}(X, \phi)$ --- Definition \ref{de:btc};
			
			\item[(7)] ${\rm C}_\phi$: space of  maximal test curves in ${\rm PSH}(X, \phi)$ --- Definition \ref{de:maxtc};
			
			\item[(8)] ${\rm R}_\phi$: space of geodesic rays in ${\rm PSH}(X, \phi)$ --- Definition \ref{de:max};
			
			\item[(9)] $\lambda_u$: critical value of $u\in {\rm SR}_\phi$ --- Definition \ref{de:srphi};
			
			\item[(10)] $\lambda_v$: critical value of $v\in {\rm TC}_\phi$ --- Definition \ref{de:btc};
			
			\item[(11)] $K_X$: canonical line bundle of $X$;
			
			\item[(12)] $H^0(X, K_X+L)$ or $H^0(X, \mathcal O(K_X+L))$: space of holomorphic sections of $K_X+L$; 
			
			\item[(13)] $\mathcal I(\psi)$: sheaf of germs of holomorphic functions $f$ with locally integrable $|f|^2e^{-\psi}$;
			
			\item[(14)] $\nu_x(\sigma)$: Lelong number of $\sigma$ at $x$ --- \eqref{eq:lelongnumber};
			
			\item[(15)] $NY$: normal bundle of a submanifold $Y$ ---\eqref{eq:NY-def};			
			
			\item[(16)] $\phi_{Y}$: $NY$-limit of the metric potential $\phi$ --- \eqref{de:wk};
			
			\item[(17)] Upper semi-continuous regularizations of the supremum  and the limit are written as $$\sup^{\star}\, \{\cdots\}, \ \ \lim ^{\star}\, \{\cdots\}.$$
			
		\end{itemize}

		\section{Notations and main results}

		\subsection{Subgeodesic ray and test curve}

		Let $(L, e^{-\phi})$ be a pseudoeffective line bundle over a complex manifold $X$. Denote by ${\rm PSH}(X,\phi)$ the space of functions $g$ on $X$ such that $g+\phi$ is psh. Fix $G\leq 0$ in ${\rm PSH}(X,\phi)$. As we mentioned in the introduction, the key constuction in the Berndtsson-Lempert approach is the following function
		$$
		u_t:=\max\{G+t, 0\}, \ \ \ t\in \mathbb R_{>0}:=(0, \infty).
		$$
		Put 
		$$
		u(z,\tau):=u_{{\rm Re}\,\tau}(z), \ \ \ \ z\in X, \  \tau\in \mathbb H:=\{\tau\in\mathbb C: {\rm Re}\,\tau>0\}.
		$$
		We have 
		$$
		\lim_{t\to 0} u_t(z)=0, \ u(z,\tau)+ \phi(z) \ \text{is psh in $(z,\tau)\in X\times \mathbb H$.}
		$$
		Such $u$ is called a subgeodesic ray.

		\begin{definition}\label{de:sg} Let $(L, e^{-\phi})$ be a pseudoeffective line bundle over a complex manifold $X$. A map 
			\begin{align*}
				u: \mathbb R_{>0} & \to {\rm PSH}(X,\phi) \\
				t & \mapsto u_t
			\end{align*}
			is called a subgeodesic ray if $\lim_{t\to 0} u_t(z)=0$ and $\phi(z)+ u_{{\rm Re}\, \tau}(z)$ is psh in $(z, \tau)\in X\times \mathbb H$.
		\end{definition} 
		
		The psh assumption implies that $u_t$ is convex in $t$, thus
		$$
		t\mapsto \frac{u_t(z)}{t}
		$$
		is increasing in $t\in \mathbb R_{>0}$. We shall introduce the following notation.

		\begin{definition}\label{de:srphi} We shall denote by ${\rm SR}_\phi$ the space of subgeodesic rays $u$ with
			\begin{equation}\label{eq:critical}
				\text{critical value}\, \lambda_u:=\sup_{z\in X}\, \lim_{t\to\infty} \frac{u_t(z)}{t}<\infty \ \  \text{and}\ \inf_{z\in X} \,\lim_{t\to 0} \frac{u_t(z)}{t} =0. 
			\end{equation}
		\end{definition}

		\noindent
		\textbf{Remark.} \emph{Let $u$ be a subgeodesic ray. Thanks to Kiselman's minimum principle \cite{Ki78}, 
			\begin{equation}\label{eq:checku}
				\check{u}_\alpha:=\inf_{t>0} \{u_t-t\alpha\} \in {\rm PSH}(X,\phi).
			\end{equation}
			One may also verify that $\lim_{\alpha\to-\infty} \check{u}_\alpha(z)=0$ and $\check{u}_\alpha$ is concave, decreasing and usc in $\alpha$. Assume further that $u\in {\rm SR}_\phi$, then we also have 
			$$
			\text{$\inf\{\alpha\in\mathbb R: \check{u}_\alpha\equiv -\infty\} =\lambda_u<\infty$ and $\sup\{\alpha\in\mathbb R: \check{u}_\alpha\equiv 0\}= 0$,}
			$$
			which suggests us to introduce the following notation.}

		\begin{definition}\label{de:btc} Let $(L, e^{-\phi})$ be a pseudoeffective  line bundle over a complex manifold $X$. A map 
			\begin{align*}
				v: \mathbb R & \to {\rm PSH}(X,\phi) \\
				\alpha & \mapsto v_\alpha
			\end{align*}
			is called a test curve if $\lim_{\alpha\to-\infty} v_\alpha(z)=0$ and for every fixed $z$, $\alpha\mapsto v_\alpha(z)$ is concave, decreasing and usc. We shall denote by  ${\rm TC}_\phi$ the space of test curves $v$ with
			\begin{equation}\label{eq:btc}
				\text{critical value}\, \lambda_v:= \inf\{\alpha\in\mathbb R: v_\alpha\equiv -\infty\}<\infty  \ \  \text{and}\ \sup\{\alpha\in\mathbb R: v_\alpha\equiv 0\}= 0.
			\end{equation}
			
		\end{definition}

		\noindent
		\textbf{Remark.} \emph{Put
			\begin{equation}
				\hat v_t:=\sup_{\alpha\in \mathbb R}\, \{v_\alpha+\alpha t\}.
			\end{equation}
			Thanks to Proposition 3.6 in \cite{DZ22} (see also \cite{Tes24}), we have  $\hat v \in {\rm SR}_\phi$ for any $v\in {\rm TC}_\phi$ and $\check{u}\in {\rm TC}_\phi$ for any $u\in {\rm SR}_\phi$. Thus the partial Legendre transforms
			\begin{equation}\label{eq:plt}
				\hat v_t:=\sup_{\alpha\in \mathbb R}\, \{v_\alpha+\alpha t\}, \ \ \check{u}_\alpha:=\inf_{t>0} \{u_t-t\alpha\} ,
			\end{equation}
			give a bijection between ${\rm TC}_\phi$ and ${\rm SR}_\phi$. This is the (preliminary part) of the Ross-Witt Nystr\"om correspondence in \cite{RW14}.}

		\subsection{First Main Theorem and proof of Theorem A} With the notations in the previous section, we can state the following Ross-Witt Nystr\"om correspondence version of the Ohsawa-Takegoshi theorem (more precisely, only the  estimate part, since the given $F$ is automatically an extension of its restriction to the non-integrable locus of $e^{-v_\alpha}$).

		\medskip
		\noindent
		\textbf{First Main Theorem.} \emph{Let $(L, e^{-\phi})$ be a positive line bundle over an $n$-dimentional compact complex manifold $X$. Fix $v\in {\rm TC}_\phi$ and $0<\alpha<\lambda_v$ (see \eqref{eq:btc}. Then for every $L$-valued holomorphic $n$-form $F$ on $X$, there exists another  $L$-valued holomorphic $n$-form $ F_\alpha$ on $X$ with
			\begin{equation}\label{eq:D1}
				\int_X i^{n^2} (F_\alpha- F)\wedge \overline{(F_\alpha -F)} \, e^{-\phi-v_\alpha} <\infty
			\end{equation}
			satisfying the following estimate with respect to $\hat v_t$ defined in \eqref{eq:plt}
			\begin{equation}\label{eq:D2}
				\int_X i^{n^2} F_\alpha\wedge \overline{F_\alpha} \, e^{-\phi} \leq \liminf_{t\to \infty} e^{\alpha t} \int_X i^{n^2} F\wedge \bar F \, e^{-\phi-\hat v_t}.
		\end{equation}}
		
		Before the proof, let us see how to use it to prove Theorem A.
		
		\begin{proof}[Proof of Theorem A]  	Put
			$$
			E(z):=
			\begin{cases}
				\tan^2\left(\frac{d(z,x)}{2} \right) & d(z,x)<\pi; \\
				\infty & d(z,x)\geq \pi,
			\end{cases}
			$$
			where $d$ denotes the distance function on $(X, \omega=i\partial\dbar\phi)$. Let us define $v_\alpha$ such that $v_\alpha:=0$ for $\alpha\leq 0$ and $v_\alpha:=-\infty$ for $\alpha >2$. For $0< \alpha\leq 2$, we shall put
			$$
			v_\alpha(z):=
			\begin{cases}
				\alpha \log E(z) +2\log \frac2{1+E(z)} -\alpha \log\alpha -(2-\alpha)\log(2-\alpha) & E(z) < \frac\alpha{2-\alpha}; \\
				0 & E(z)\geq \frac\alpha{2-\alpha},
			\end{cases}
			$$
			where $0\log0:=0$. This special choice of $v_\alpha$ comes from
			\begin{equation}\label{eq:AD0}
				v_\alpha=\inf_{t>0} \left\lbrace 2\log\frac{1+e^t E}{1+E}-t\alpha\right\rbrace.
			\end{equation}
			Similar to Corollary 2.3 in \cite{Wang23} ($v_2$ equals to $2\psi$ there),  one may use the Hessian comparision theorem to prove that $v_\alpha \in {\rm PSH}(X,\phi)$. Thus $v\in {\rm TC}_\phi$ with $\lambda_v=2$. Note also that $e^{-v_\alpha}$ is not integrable near $x$ for $\alpha\geq 1$, thus the Nadel vanishing theorems implies that there exists a holomorphic section $F$ of $K_X+L$ on $X$ with $F(x)\neq 0$.
			Apply the above theorem ($\alpha=1$ case) to this $F$ and the test curve $v$ in \eqref{eq:AD0}, we get a holomorphic section $F_1$ of $K_X+L$ on $X$ with $F_1(x)=F(x)$ (by \eqref{eq:D1} since $e^{-v_1}$ is not integrable near $x$) and 
			$$
			\int_X i F_1\wedge \overline{F_1} \, e^{-\phi} \leq \liminf_{t\to\infty} e^t\int_X i F\wedge \overline{F} \, e^{-\phi-\hat v_t}.
			$$ 
			By \eqref{eq:AD0}, we have $\hat v_t = 2\log\frac{1+e^t E}{1+E}$, which gives
			$$
			\int_X i F_1\wedge \overline{F_1} \, e^{-\phi} \leq \liminf_{t\to\infty} e^t \int_X i F\wedge \overline{F} \, e^{-\phi} \frac{(1+E)^2}{(1+e^tE)^2}.
			$$
			Choose local coordinate $z$ around $x$ with $z(x)=0$ and $d(z,x)=|z|+ \,\text{higher order terms}$, we know that 
			$E(z)=|z|^2/4$ up to higher order terms. By a change of coordinate $\xi=e^{t/2}z$, we have
			$$
			\liminf_{t\to\infty} e^t \int_X i F\wedge \overline{F} \, e^{-\phi} \frac{(1+E)^2}{(1+e^tE)^2}= \frac{i F(x)\wedge \overline{F(x)}e^{-\phi(x)}}{\omega(x)} \int_{\mathbb C} \frac{1}{(1+|\xi|^2/4)^2},
			$$
			from which Theorem A follows (note that $\int_{\mathbb C} \frac{1}{(1+|\xi|^2/4)^2}=4\pi$).
		\end{proof} 
		
		\subsection{Admissible degeneration to the submanifold} (We suggest the readers to look at the examples below first). Let $M$ be a complex manifold. Recall that a holomorphic mapping 
		$$
		\mathbb C^*\times M \to M
		$$
		defined by $(a,m) \to a\cdot m$ is called a $\mathbb C^*$-action if it satisfies that $1\cdot m=m$ and $a\cdot (b\cdot m)=(ab)\cdot m$, for every $a,b\in\mathbb C^*$ and $m\in M$. In case $M=N\times\mathbb C$ is a product, we shall always use the trivial $\mathbb C^*$-action 
		$
		a\cdot (n,s):=(n,a^{-1}s).
		$
		
		\begin{definition}\label{de:cstar-eq} Let $M, N$ be two complex manifolds with $\mathbb C^*$-action. A holomorphic mapping $p:M\to N$ is called $\mathbb C^*$-equivariant if $p$ commutes with the $\mathbb C^*$-actions, i.e. $p(a\cdot m)=a\cdot p(m)$ for every $a\in\mathbb C^*$ and $m\in M$.	In case $M$ is a  submanifold of $N$ and the inclusion mapping $p$ is $\mathbb C^*$-equivariant we say that $M$ is a $\mathbb C^*$-invariant submanifold of $N$.
		\end{definition}

		\begin{definition}\label{de:cstar-deg-Y}  Let $Y$ (resp. $Y\times \mathbb C$) be a (resp. $\mathbb C^*$-invariant) closed complex submanifold of a complex manifold $X$  (resp. $X_{\mathbb C}$ with $\mathbb C^*$-action). We call  a $\mathbb C^*$-equivariant mapping
			\begin{equation}\label{eq:cstar-deg-Y}
				\mu: X_{\mathbb C} \to X\times\mathbb C
			\end{equation}
			a $\mathbb C^*$-degeneration of $X$ to $Y$ if 
			$\mu$ is biholomorphic over $X\times \mathbb C^*$, 
			\begin{equation}\label{eq:cstar-deg-Y-0}
				p:=p_2\circ \mu : X_{\mathbb C} \to \mathbb C  \  \ \ \  \text{($p_2: X\times\mathbb C \to \mathbb C$ denotes the natural projection)}
			\end{equation}
			is a submersion (means both $p$ and $dp$ are surjective), and
			\begin{equation}\label{eq:cstar-deg-Y-1}
				\mu_0(X_0) =Y, \ \ \ \mu|_{Y\times \mathbb C} ={\rm Id},
			\end{equation}
			where $X_s:=p^{-1}(s)$, $\mu_s:=p_1\circ (\mu|_{X_s})$ ($p_1: X\times\mathbb C \to X$ denotes the natural projection). $\mu$ is said to be admissible if it further satisfies
			\begin{equation}\label{eq:cstar-deg-Y-2}
				K_{X_0}\simeq \mu_0^*(K_{X_0}|_{Y}), \ \ K_X|_Y \times \mathbb C\simeq K_{X_{\mathbb C}/\mathbb C}|_{Y\times\mathbb C}.
			\end{equation}								
		\end{definition}

		\noindent
		\textbf{Remark.} \emph{Since $\mu_1:X_1\to X$ is biholomorphic, one may replace $X$ by $X_1$ and assume that $X_1=X$ and $\mu_1={\rm Id}$ (we will assume this throughout this paper). From \eqref{eq:cstar-deg-Y-1}, we know that $\mu_0$ is a contraction (means $\mu_0(y) =y$ for $y\in Y$) of $X_0$ to $Y$. Thus $\mu$ induces a deformation of the identity mapping on $X$ to a $Y$-contraction, that is the reason why it is called a degeneration of $X$ to $Y$. The  admissible condition will be used in the proof of our second main theorem.}

		\medskip

		\noindent
		\textbf{Example 1 (Theorem B).} \emph{The first example is the admissible degeneration of $\Omega$ to $0$ in Theorem B defined by \eqref{eq:B-mu} with the $\mathbb C^*$-action on $\Omega_{\mathbb C}$ defined by
			\begin{equation}\label{eq:B-action}
				a\cdot (\xi,s):=(a\xi, a^{-1}s), \ \ a\in\mathbb C^*, \ \ (\xi,s)\in\Omega_{\mathbb C}.
			\end{equation}
			The admissible condition is trivial there since any related bundles are trivial in this case. The manifold version of \eqref{eq:B-mu} is also called the deformation to the tangent space.}
		
		\medskip

		\noindent
		\textbf{Example 2 (Deformation to the tangent space).}  Let $X$ be an $n$-dimensional complex manifold. Fix $x\in X$ and a holomorphic coordinate chart $z$ around a neighborhood $U$ of $x$ with $z(x)=0$ and $z(U)=\mathbb B$ is the unit ball in $\mathbb C^n$. Then one may define $X_{\mathbb C}$ by gluing $X\times\mathbb C^*$ and 
		$$
		\mathbb B_{\mathbb C}:=\{(\xi, s)\in \mathbb C^n\times\mathbb C: s\xi\in \mathbb B\}
		$$
		via the change of variables
		$$
		(\xi,s)\mapsto (z,s):=(s\xi, s).
		$$
		More precisely, $X_{\mathbb C}$ is defined as the quotient of the disjoint union:
		\begin{equation}\label{eq:glue}
			X_{\mathbb C}:=\left((X\times\mathbb C^*) \sqcup  \mathbb B_{\mathbb C}\right)/\sim, \ \ \ \ \ \ (z,s)\sim (\xi,s) \Leftrightarrow z=s\xi.
		\end{equation}
		Similar to $\eqref{eq:B-mu}$, the mapping $\mu$  is now defined by $\mu:X_{\mathbb C} \to X\times \mathbb C$ with
		\begin{equation}\label{eq:mu}
			\mu(z,s):=(z,s) , \  \text{for $(z,s)\in X\times\mathbb C^*$ and} \ \mu(\xi,s):=(s\xi,s), \  \text{for $(\xi, s)\in \mathbb B_{\mathbb C}$.}
		\end{equation}
		Recall that we put $p:=p_2\circ\mu$. 
		
		\begin{definition}\label{de:dtt} We call $p:X_{\mathbb C} \to X\times \mathbb C \to \mathbb C$ the deformation of $X$ to the tangent space $T_xX$.	
		\end{definition}
		
		Recall that $X_s:=p^{-1}(s)$ denotes the fiber at $s$. It is obvious that $X_1=X$ since $\mu$ is the identity mapping around $X_1$. One may also observe that $X_0\simeq T_xX$ via the differential at zero of $z_{\xi}:s\mapsto s\xi$:
		\begin{equation}\label{eq:tangent}
			\xi\mapsto (z_{\xi})_*\left(\frac{\partial}{\partial s}\right)=\xi_1 \frac{\partial}{\partial z_1}+\cdots+ \xi_n \frac{\partial}{\partial z_n}.
		\end{equation}
		That is the reason why $p$ is called the deformation of $X$ to $T_xX$. Similar to the $\mathbb C^*$-action \eqref{eq:B-action} for the domain case, we also have a natural $\mathbb C^*$-action on $X_{\mathbb C}$, which is simply the extension of the trivial $\mathbb C^*$-action $a\cdot(z,s):=(z,a^{-1}s)$ on $X\times\mathbb C^*$ such that
		\begin{equation}\label{eq:nontrivialCstar}
			a\cdot (\xi,s)=(a\xi,a^{-1}s), \ \ (\xi,s)\in \mathbb B_\mathbb C.
		\end{equation}
		The isomorphisms in the admissible condition \eqref{eq:cstar-deg-Y-2} come from 
		\begin{equation}\label{eq:adm-1}
			dz_1\wedge\cdots\wedge dz_n\mapsto d\xi_1\wedge\cdots\wedge d\xi_n=\frac{d(s\xi_1)\wedge\cdots\wedge d(s\xi_n)}{s^n},
		\end{equation}
		($\frac{d(s\xi_1)\wedge\cdots\wedge d(s\xi_n)}{s^n}$ extends to the zero fiber by $\lim_{s\to 0} \frac{d(s\xi_1)\wedge\cdots\wedge d(s\xi_n)}{s^n} =d\xi_1\wedge\cdots\wedge d\xi_n$).

		\medskip

		\noindent
		\textbf{Example 3 (Deformation to the normal bundle).}  Let $Y$ be a codimension $m$ ($m\geq 1$) closed complex submanifold of $X$. Fix an open covering 
		$$
		Y\subset U:=\bigcup U_l
		$$ 
		with holomorphic coordinate chart $z^l$ such that
		$$
		z^l(U_l)=\mathbb B_{m} \times \mathbb B_{n-m}, \ \ z^l(U_l\cap Y)=\{0\}\times \mathbb B_{n-m} ,
		$$
		where $\mathbb B_k$ denotes the unit ball in $\mathbb C^k$. Similar to Example 2, one may obtain a new complex manifold  $X_{\mathbb C}$  by gluing $X\times \mathbb C^*$ and each
		$$
		\mathcal B_l:=\{(\xi^l, w^l, s)\in \mathbb C^m \times \mathbb B_{n-m} \times  \mathbb C: s\xi^l \in \mathbb B_m\}
		$$
		via the change of variables
		\begin{equation}\label{eq:COV}
			\mu:	(\xi^l,w^l, s) \mapsto (z^l,s):=(s\xi^l, w^l, s).
		\end{equation}
		The natural $\mathbb C^*$-action on $X_{\mathbb C}$ is now given by
		\begin{equation}\label{eq:Cstaraction-m}
			a\cdot (\xi^l, w^l,s):=(a\xi^l, w^l, a^{-1}s), \ \ \forall \ a\in \mathbb C^*.
		\end{equation}
		
		\begin{definition}\label{de:dtn} We call $p=p_2\circ \mu: X_{\mathbb C} \to X\times \mathbb C 
			\to \mathbb C$ the deformation of $X$ to the normal bundle $NY$, where $NY$ is defined as the holomorphic quotient bundle over $Y$ 
			\begin{equation}\label{eq:NY-def}
				NY:=(TX)|_Y / TY
			\end{equation}	
		\end{definition}
		
		Similar to the isomorphism \eqref{eq:tangent}, the zero fiber $X_0$ is isomorphic to $NY$ via
		\begin{equation}\label{eq:NY-def-1}
			\xi^l \mapsto \xi^l_1 \frac{\partial}{\partial z^l_1}+ \cdots + \xi^l_m \frac{\partial}{\partial z^l_m},
		\end{equation}	
		where we think of $\{\partial/\partial z^l_j\}_{1\leq j\leq m}$ as a local frame on $U_l\cap Y$ for $NY$. Similar to \eqref{eq:adm-1}, the admissible condition also holds in this case. 
		
		\medskip

		\noindent
		\textbf{Example 4 (Deformation to the weighted normal bundle).} One may also introduce a weight $\beta_j\in\mathbb Z_{>0}$ for each variable $z_j$, then the associated weighted construction is called deformation to the weighted normal bundle (cone) in \cite[section 2.4]{Abra25}, which has found a nice application \cite{ASQTW25} in the Hironaka theorem very recently. Let us take a closer look for the domain case. Different from \eqref{eq:B1}, we put
		\begin{equation}\label{eq:B1-beta}
			\Omega_{\mathbb C}:=\{(\xi, s)\in \mathbb C^n\times\mathbb C: (s^{\beta_1}\xi_1,\cdots, s^{\beta_n} \xi_n)\in \Omega\},
		\end{equation} 
		with $\mathbb C^*$-action
		\begin{equation}\label{eq:B-action-beta}
			a\cdot (\xi,s):=(a^{\beta_1}\xi_1,\cdots, a^{\beta_n}\xi_n, a^{-1}s), \ \ a\in\mathbb C^*, \ \ (\xi,s)\in\Omega_{\mathbb C}.
		\end{equation}
		Then a similar proof generalizes Theorem B  to: 
		
		\begin{theorem}\label{th:B-beta}
			Let $\Omega$ be a pseudoconvex domain in $\mathbb C^n$ with $0\in \Omega$. Let $\phi_{\mathbb C}$ be a psh function on $\Omega_{\mathbb C}$ defined by \eqref{eq:B1-beta}.
			Assume that  $\phi_{\mathbb C}$ is $S^1$-invariant with respect to the $\mathbb C^*$-action \eqref{eq:B-action-beta}.
			Put $\phi_s(\xi):=\phi_{\mathbb C}(\xi,s)$. Then there exists a holomorphic function $f$ on $\Omega$ with 	\begin{equation}\label{eq:B-beta}
				f(0)=1, \ \  \int_{\Omega}|f|^2 e^{-\phi_1} \leq  \int_{\mathbb C^n} e^{-\phi_0}.
			\end{equation}
		\end{theorem}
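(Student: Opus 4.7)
The plan is to mimic the proof of Theorem B essentially verbatim; the only modifications are bookkeeping for the weights $\beta_j$. Three ingredients are needed: pseudoconvexity of the total space $\Omega_{\mathbb C}$, Berndtsson's subharmonicity of the fiberwise Bergman kernel, and radial monotonicity extracted from the $S^1$-invariance of $\phi_{\mathbb C}$.

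First I would discard the trivial case $\int_{\mathbb C^n} e^{-\phi_0}=\infty$ (then $f\equiv 1$ works), and then verify pseudoconvexity of $\Omega_{\mathbb C}$ defined by \eqref{eq:B1-beta}: if $E$ is a psh exhaustion of $\Omega$, then
$$
E(s^{\beta_1}\xi_1,\ldots,s^{\beta_n}\xi_n)+|s|^2+|\xi|^2
$$
should serve as a psh exhaustion of $\Omega_{\mathbb C}$, the first term being psh as the pullback of $E$ by a holomorphic map, and the other terms providing properness in both the $s$- and $\xi$-directions.

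Next, set $\Omega_s:=\{\xi\in\mathbb C^n : (s^{\beta_1}\xi_1,\ldots,s^{\beta_n}\xi_n)\in\Omega\}$. Since $0\in\Omega$, we have $0\in\Omega_s$ for every $s$, and in particular $\Omega_0=\mathbb C^n$. Let $K_s(0)$ denote the Bergman kernel at $0\in\Omega_s$ with weight $\phi_s$. Applying \cite[Theorem 1.1]{Bern06} to the pseudoconvex pair $(\Omega_{\mathbb C},\phi_{\mathbb C})$ yields that $\log K_s(0)$ is subharmonic in $s\in\mathbb C$. The key new input is now the weighted $S^1$-invariance: for $|a|=1$, the map $\xi\mapsto (a^{\beta_1}\xi_1,\ldots,a^{\beta_n}\xi_n)$ is a biholomorphism $\Omega_s\to\Omega_{a^{-1}s}$ fixing the origin and, by \eqref{eq:B-action-beta} and the invariance of $\phi_{\mathbb C}$, pulling $\phi_{a^{-1}s}$ back to $\phi_s$, so $K_s(0)=K_{a^{-1}s}(0)$. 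A radial subharmonic function is monotone increasing in $|s|$, hence $K_1(0)\geq K_0(0)$. Combining with the trial-function bound $K_0(0)\geq 1/\int_{\mathbb C^n}e^{-\phi_0}$ and the extremal characterization $K_1(0)=1/\inf_{f(0)=1}\int_\Omega |f|^2 e^{-\phi_1}$ gives the desired $f$ satisfying \eqref{eq:B-beta}.

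The hard part I anticipate is purely technical: Berndtsson's theorem is cleanest for smoothly bounded pseudoconvex domains and smooth psh weights, whereas here $\phi_{\mathbb C}$ is merely psh and the zero fiber $\Omega_0=\mathbb C^n$ has different topology from the generic fiber. One deals with this by the standard exhaustion of $\Omega_{\mathbb C}$ by smoothly bounded pseudoconvex subdomains, a standard regularization of $\phi_{\mathbb C}$, establishing $K_1(0)\geq K_0(0)$ in the regularized setting, and only then passing to the limit before plugging in the trial function $1$ on $\Omega_0$. Since this envelope argument is already implicit in the proof of Theorem B, no genuinely new obstacle arises in the weighted case.
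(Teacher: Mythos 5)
Your proposal is correct and follows essentially the same route as the paper, which proves Theorem \ref{th:B-beta} by running the proof of Theorem B with the weighted exhaustion $E(s^{\beta_1}\xi_1,\ldots,s^{\beta_n}\xi_n)+|s|^2+|\xi|^2$, Berndtsson's subharmonicity of $\log K_s(0)$, and the weighted $S^1$-invariance giving radial dependence and hence monotonicity in $|s|$. Your extra remarks on regularization and the unimodular change of variables $\xi\mapsto(a^{\beta_1}\xi_1,\ldots,a^{\beta_n}\xi_n)$ are fine and consistent with what the paper leaves implicit.
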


		\subsection{Second Main Theorem and a simpler proof of Theorem A} Our second main theorem is a generalization of Theorem B to a general admissible degeneration $\mu: X_{\mathbb C} \to X\times\mathbb C$. To fix the notation, for a holomorphic line bundle $L$ over $X$, we shall introduce
		\begin{equation}\label{eq:LC-smt}
			L_{\mathbb C}:=\mu^*(L\times\mathbb C), \ \ L_s:=L_{\mathbb C}|_{X_s}.
		\end{equation}
		In particular, \eqref{eq:cstar-deg-Y-2} implies that
		\begin{equation}\label{eq:cstar-deg-Y-3}
			K_{X_0}+L_0\simeq \mu_0^*((K_{X_0}+L_0)|_{Y}), \ \ (K_X+L)|_Y \times \mathbb C\simeq (K_{X_{\mathbb C}/\mathbb C}+L_{\mathbb C})|_{Y\times\mathbb C}.
		\end{equation}
		The restriction of the second isomorphism to the zero fiber gives
		\begin{equation}\label{eq:cstar-deg-Y-4}
			(K_X+L)|_Y \simeq (K_{X_0}+L_{0})|_{Y}.
		\end{equation}
		Thus one may think of $F_Y\in H^0(Y, (K_X+L)|_Y)$ as a holomorphic section of $(K_{X_0}+L_{0})|_{Y}$ and then by \eqref{eq:cstar-deg-Y-3}, 
		\begin{equation}\label{eq:SMT}
			\text{$\mu_0^*(F_Y) \in H^0(X_0, K_{X_0}+L_0) \ \ \ $ (sometimes, we still write it as $F_Y$)} 
		\end{equation}
		will automatically be an extension of $F_Y$ to $X_0$. This means that the extension problem is trivial at the zero fiber! Similar to the proof of Theorem B, Berndtsson's positivity of the direct image bundle \cite{Bern09} implies that the minimal norm of the extension is a decreasing function of $|s|$. Thus if the trivial extension $\mu_0^*(F_Y)$ on $X_0$ has a finite norm $A$, then $F_Y$ will have an extension to $X_1=X$ with norm $\leq A$. The precise statement is the following:
		
		\medskip
		\noindent
		\textbf{Second Main Theorem.} \emph{Let $L$ be a holomorphic line bundle over a complex manifold $X$. Let $\mu: X_{\mathbb C} \to X\times\mathbb C$ be an 
			admissible degeneration of $X$ to its closed complex submanifold $Y$. Assume that $X_{\mathbb C}$ is Stein  (or almost Stein in the sense that $X_{\mathbb C}\setminus Z_{\mathbb C}$ is Stein for some smooth $\mathbb C^*$-invariant divisor $Z_{\mathbb C}$ which intersects $Y\times\mathbb C$ properly) and $(L_{\mathbb C}, e^{-\phi_{\mathbb C}})$ is $S^1$-pseudoeffective in the sense that the local metric potential $\phi_{\mathbb C}$ is psh and $S^1$-invariant with respect to the $\mathbb C^*$-action. Fix $F_Y\in H^0(Y, (K_X+L)|_Y)$. If
			\begin{equation}
				||F_Y||^2:=\int_{X_0} i^{n^2} F_Y \wedge \overline{F_Y} \, e^{-\phi_0} <\infty,  \ \ \text{(by \eqref{eq:SMT} the integral is well defined)},
			\end{equation}
			where $\phi_s:=\phi_{\mathbb C}|_{X_s}$, then there exists a holomorphic section $F_X$ of $K_{X}+L$ on $X$ with 
			\begin{equation}\label{eq:C'-estmate}
				F_X|_Y=F_Y, \ \ \int_{X} i^{n^2} F_X \wedge \overline{F_X} \, e^{-\phi_1} \leq	||F_Y||^2. 
		\end{equation}}
		
		\begin{proof} Since $L^2$-holomorphic section extends over divisors, one may assume that $X_{\mathbb C}$ is Stein. Fix a volume form $dV$ on $Y$, every smooth section 
			$f\in C_c^{\infty}(Y, -(K_X+L)|_Y)$ with compact support defines a linear functional
			$$
			\xi_f: g\mapsto \int_Y f\otimes g\, dV, \ \ \ g\in  C^{\infty}(Y, (K_X+L)|_Y),
			$$
			satisfying that for every $g\in H^0(X_{\mathbb C}, K_{X_{\mathbb C}/\mathbb C}+L_{\mathbb C})$
			$$
			s\mapsto \xi_f(g_s), \ \ g_s:= g|_{Y\times\{s\}}\in H^0(Y, (K_X+L)|_Y) \ \text{(by \eqref{eq:cstar-deg-Y-3})}
			$$
			is holomorphic. Thus $\xi_f$ defines a holomorphic section of the dual of the direct image bundle (see \cite[Definition 2.14]{Wang17}) and Berndtsson--P\u aun's positivity of the directly image bundle implies that (one may follow \cite[section 2]{BP08} to add the details)
			$$
			s\mapsto \log(||\xi_f||^2_s)
			$$
			is subharmonic on $\mathbb C$, where
			$$
			||\xi_f||^2_s:=\sup_{F\in H^0(X_s, K_{X_s}+L_s)} \frac{|\xi_f(F|_{Y})|^2}{||F||_s^2}, \ \ ||F||_s^2:=\int_{X_s} i^{n^2} F \wedge \overline{F} \, e^{-\phi_s}.
			$$
			Since $\phi_{\mathbb C}$ is $S^1$-invariant, we know that $||\xi_f||^2_s$ is a convex increasing function of $\log|s|$. Denote by $||F_Y||_s$ the minimal $||\cdot||_s$-norm for the space of holomorphic extensions of $F_Y$ to $X_s$, then \cite[proof of Theorem 3.1]{BL16} implies that
			$$
			||F_Y||_s^2=\sup_{f\in C_c^{\infty}(Y, -(K_X+L)|_Y)} \frac{|\xi_f(F_Y)|^2}{||\xi_f||_s^2}.
			$$
			In particular, $||F_Y||_s^2$ is decreasing in $|s|$ and our theorem follows.
		\end{proof}

		\begin{proof}[A simpler proof of Theorem A] With the notions in the first proof we introduce
			\begin{equation}\label{eq:s-no-zero}
				\phi_{\mathbb C^*}(z,s):=2\log \frac{E(z)+|s|^2}{|s|^2(1+E(z))}, \ \ z\in X, s\in \mathbb C^*.
			\end{equation}
			Since
			\begin{equation}\label{eq:s-to-zero}
				\lim_{s\to 0} \phi_{\mathbb C^*}(s\xi,s) = 2\log\left(1+\frac{|\xi|^2}{4}\right)
			\end{equation}
			is bounded, we know that $\phi_{\mathbb C^*}$ extends to a psh metric potential, say $\phi_{\mathbb C}$, on the total space $X_{\mathbb C}$ of the deformation of $X$ to $T_xX$ (see Definition \ref{de:dtt}) with $\phi_0$ being equal to the limit in  \eqref{eq:s-to-zero}. Thus Theorem A follows directly from our second main theorem.
		\end{proof}
		
		One may also consider generalizations of our second main theorem to $\mathbb C$-degenerations. One example is the following generalization (the $\mathbb C^*$-degeneration method does not apply if the weights $\beta_j$ below are not rationally related) of Theorem \ref{th:Ohsawa}.

		\begin{theorem}\label{th:C-deg} Let $\phi$ be a psh function on  a pseudoconvex domain  $\Omega\subset\mathbb C^n$. Assume that $0\in \Omega$. Then for every psh function $\psi$ on $\mathbb C^n$ with 
			\begin{equation}\label{eq:C-deg}
				\psi(e^{i\beta_1 \theta}z_1, 
				\cdots, e^{i\beta_n \theta}z_n)=\psi(z_1, \cdots, z_n), \ \forall \ \theta\in\mathbb R, \ z\in \mathbb C^n,
			\end{equation}
			where $\beta_1,\cdots, \beta_n$ are positive constants, there exists a holomorphic function $f$ on $\Omega$ with 		
			\begin{equation}\label{eq:C-deg-1}
				f(0)=1, \ \  \int_{\Omega}|f|^2 e^{-\phi-\psi} \leq  \int_{\mathbb C^n} e^{-\phi(0)-\psi} \, \ (\text{the Lebesgue measure is omitted}).
			\end{equation}
		\end{theorem}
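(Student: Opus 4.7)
The plan is to adapt the $\mathbb{C}^*$-degeneration proof of Theorem B (and its positive-integer weighted extension, Theorem~\ref{th:B-beta}) to a $\mathbb{C}$-degeneration indexed by the additive parameter $\tau\in\mathbb{C}$, since $s^{\beta_j}$ is multi-valued when $\beta_j\notin\mathbb{Z}$. First I would define
$$
\Omega_{\mathbb{C}}:=\{(\xi,\tau)\in\mathbb{C}^n\times\mathbb{C}: (e^{\beta_1\tau}\xi_1,\ldots,e^{\beta_n\tau}\xi_n)\in\Omega\},
$$
verify pseudoconvexity via the psh exhaustion $E(e^{\beta_1\tau}\xi_1,\ldots,e^{\beta_n\tau}\xi_n)+|\xi|^2+|\tau|^2$ where $E$ is a psh exhaustion of $\Omega$, and equip $\Omega_{\mathbb{C}}$ with the psh weight
$$
\Phi(\xi,\tau):=\phi(e^{\beta_1\tau}\xi_1,\ldots,e^{\beta_n\tau}\xi_n)+\psi(\xi).
$$

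Theorem~1.1 of \cite{Bern06} then gives that $\tau\mapsto\log K_\tau(0)$ is subharmonic on $\mathbb{C}$, where $K_\tau(0)$ denotes the Bergman kernel at $\xi=0$ of the fiber $\Omega_\tau:=\Omega_{\mathbb{C}}\cap(\mathbb{C}^n\times\{\tau\})$ with respect to $\Phi(\cdot,\tau)$. The $\mathbb{R}$-invariance \eqref{eq:C-deg} of $\psi$ implies that $(\xi,\tau)\mapsto(e^{-i\beta_1\theta}\xi_1,\ldots,e^{-i\beta_n\theta}\xi_n,\tau+i\theta)$ is a biholomorphism of $\Omega_{\mathbb{C}}$ fixing $\xi=0$ and preserving $\Phi$, so $\log K_\tau(0)$ depends only on $t={\rm Re}\,\tau$ and is therefore convex in $t$. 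Passing to $s:=e^{\tau}\in\mathbb{C}^*$, the function $\log K_s(0)$ becomes a radial subharmonic function on $\mathbb{C}^*$ that at $|s|=1$ coincides with the Bergman kernel of $\Omega$ at $0$ with weight $\phi+\psi$.

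The central step I would carry out is to show that as $|s|\to 0$ (equivalently $t\to -\infty$), the fibers $\Omega_\tau$ exhaust $\mathbb{C}^n$ and $\Phi_\tau\to\phi(0)+\psi$, with $K_s(0)\to 1/\int_{\mathbb{C}^n}e^{-\phi(0)-\psi}$. The value of this limiting Bergman kernel is pinned down by the $\mathbb{R}$-invariance of $\phi(0)+\psi$ together with the strict positivity of each $\beta_j$: every monomial $\xi^\alpha$ with $\alpha\neq 0$ has positive $\mathbb{R}$-character $\sum_j \beta_j\alpha_j>0$ and is therefore $L^2$-orthogonal to the constant $1$ against the invariant measure $e^{-\phi(0)-\psi}d\lambda$, so $f\equiv 1$ is the Bergman extremal at $\xi=0$ on $\mathbb{C}^n$. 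Once boundedness of $\log K_s(0)$ near $s=0$ is in hand, the Riemann removable singularity theorem extends it subharmonically across $s=0$, and the maximum principle for radial subharmonic functions yields $\log K_1(0)\geq \log K_0(0)=-\log\int_{\mathbb{C}^n}e^{-\phi(0)-\psi}$. The Bergman extremal $f$ at $\xi=0$ for $(\Omega,\phi+\psi)$ then satisfies $f(0)=1$ and $\int_\Omega|f|^2 e^{-\phi-\psi}=1/K_1(0)\leq\int_{\mathbb{C}^n}e^{-\phi(0)-\psi}$, which is \eqref{eq:C-deg-1}.

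The main obstacle is precisely this limit computation when $\phi$ is not continuous at $0$ (a psh function with $\phi(0)>-\infty$ can still have $\phi\to-\infty$ along a sequence clustering at $0$). Upper semi-continuity yields $\phi\leq\phi(0)+\epsilon$ on a neighborhood of $0$, which, combined with the sub-mean inequality for holomorphic functions, gives a uniform upper bound on $K_\tau(0)$ for very negative $t$ and hence the boundedness needed for the removable singularity step. To identify the limit itself I would approximate $\phi$ from above by a decreasing sequence of smooth psh functions $\phi_k\searrow\phi$, apply the preceding argument to each $\phi_k$ (where dominated convergence makes the limit direct, using lower bounds on $\phi_k$ near $0$ and integrability of $e^{-\psi}$ at infinity), and pass to the limit $k\to\infty$ via monotone convergence of $\int e^{-\phi_k(0)-\psi}$ together with continuity of the weighted Bergman kernel under decreasing psh weights. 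The remaining steps (subharmonicity, invariance, extension, extraction of the extremal) are routine.
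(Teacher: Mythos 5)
Your proposal is correct, and after the fiberwise change of variables $z_j=e^{\beta_j\tau}\xi_j$ it carries exactly the same analytic content as the paper's proof: the paper works on the fixed domain $\Omega$ with the moving weight $\varphi_t(z)=\phi(z)+\psi(e^{\beta_1t}z_1,\ldots,e^{\beta_nt}z_n)$, shows $L(t)=-2(\beta_1+\cdots+\beta_n)t+\log K_t(0)$ is convex (Berndtsson) and bounded above as $t\to\infty$, hence decreasing, and evaluates the limit with the candidate $F\equiv1$; your $\log K_\tau(0)$ on the moving fibers equals $L(-\mathrm{Re}\,\tau)$, the affine correction being precisely the Jacobian factor $e^{2(\sum\beta_j)\mathrm{Re}\,\tau}$ of the rescaling. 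What your packaging buys is that it stays in the spirit of Theorem B and the Second Main Theorem (non-product Berndtsson on the total space of a degeneration) and explains the origin of the constant $c=2(\beta_1+\cdots+\beta_n)$; what the paper's packaging avoids is the one slippery point in yours: when the $\beta_j$ are not rationally related there is no actual family over $s=e^\tau\in\mathbb C^*$ (the fiber depends on $\tau$, not only on $e^\tau$), which is exactly why the paper says the $\mathbb C^*$-degeneration method does not apply. This does not harm your argument, since by the invariance $\log K_\tau(0)$ depends only on $t=\mathrm{Re}\,\tau$ and a convex function bounded above as $t\to-\infty$ is already nondecreasing, so the detour through a removable singularity at $s=0$ can be skipped. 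Your endgame for non-smooth $\phi$ (decreasing smooth approximation, dominated convergence for each $\phi_k$, monotone convergence of weighted Bergman kernels via a normal-families/Fatou extraction) is sound and plays the role of the paper's upfront ``standard approximation process''; as in the paper, one should also first reduce to $\Omega$ bounded (or exhaust and take a second limit) so the dominating function is integrable, and dispose of the trivial case $\int_{\mathbb C^n}e^{-\phi(0)-\psi}=\infty$ by taking $f\equiv1$.
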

		
		\begin{proof} We follow the proof in \cite{NW22}. By a standard approximation process, one may assume that $\Omega$ is bounded, $\psi$ is smooth with $\int_{\mathbb C^n} e^{-\psi} <\infty$ and $\phi$ is smooth on a neighborhood of the closure of $\Omega$. The main idea is to apply Berndtsson's convexity theorem to the psh subgeodesic 
			\begin{equation}\label{eq:varphit}
				\varphi_t(z):= \phi(z)+\psi(e^{\beta_1 t}z_1, 
				\cdots, e^{\beta_1 t}z_n), \  \ t\in\mathbb R,  \ \ z\in \Omega,
			\end{equation}
			(it is called a psh subgeodesic because $	\varphi(z,\tau):=\phi(z)+\psi(e^{\beta_1 \tau}z_1,\cdots, e^{\beta_1 \tau}z_n)$ is independent of ${\rm Im}\,\tau$ and psh in $(z,\tau)\in\Omega\times\mathbb C$), which defines the norms
			\begin{equation}\label{eq:norm-t}
				||F||_t^2:=\int_{\Omega} |F|^2 e^{-\varphi_t}, \ \ F\in \mathcal A_t:=\mathcal O(\Omega)\cap L^2(\Omega, \varphi_t).
			\end{equation}
			Berndtsson's result \cite[Theorem 1.1]{Bern06} implies that $\log K_t(0)$ is convex in $t\in\mathbb R$, where
			\begin{equation}\label{eq:kt}
				K_t(0):=\sup_{F\in \mathcal A_t} \frac{|F(0)|^2}{||F||_t^2}
			\end{equation}
			denotes the Bergman kernel at $0\in \Omega$. Thus for every $c\in\mathbb R$, 
			$$
			L(t):=-ct+\log K_t(0)
			$$
			is convex. The Berndtsson-Lempert approach in \cite{BL16} tells us that one may use convexity of $L$ to prove the monotonicity, which would give the estimate that we need. The idea is to show that for some suitable $c$, $L(t)$ is bounded from above for $t\to\infty$ (with this the convexity of $L$ would imply that $L$ is decreasing). By our assumption at the beginning, $\phi$ is bounded, thus
			$$
			K_t(0) \leq  \sup_{F\in \mathcal A_t} \frac{|F(0)|^2 e^{\sup_{\Omega}\phi}}{\int_{\Omega} |F(z)|^2 e^{-\psi(e^{\beta_1 t}z_1, 
					\cdots, e^{\beta_1 t}z_n)}} \leq \frac{e^{\sup_{\Omega}\phi}}{\int_{B} e^{-\psi(e^{\beta_1 t}z_1, 
					\cdots, e^{\beta_1 t}z_n)}},
			$$
			where $B$ denotes the largest ball in $\Omega$ with center $0$. 
			A change of variables $z_j=e^{-\beta_j t} \xi_j$  gives
			$$
			\int_{B} e^{-\psi(e^{\beta_1 t}z_1, 
				\cdots, e^{\beta_1 t}z_n)} = e^{-2(\beta_1+\cdots+\beta_n)t} \int_{B_t}  e^{-\psi(\xi)}, 
			$$
			where $B_t:=\{\xi\in \mathbb C^n: (e^{-\beta_1 t} \xi_1, \cdots, e^{-\beta_n t} \xi_n)\in B\}$. Note that $B\subset B_t$ for $t\geq 0$, thus if we take $c:=2(\beta_1+\cdots+\beta_n)$ then
			$$
			L(t) \leq  \sup_{\Omega}\phi -\log \int_B e^{-\psi} <\infty, \ \ \forall \ t\geq 0.
			$$
			Hence for $c=2(\beta_1+\cdots+\beta_n)$, $L$ is decreasing and subsequently
			\begin{equation}\label{eq:it}
				-L(t)=2(\beta_1+\cdots+\beta_n)t+\inf\{\log ||F||_t^2: F\in A_t, F(0)=1\}
			\end{equation}
			is increasing. Taking the candidate $F=1$, we have
			\begin{equation}\label{eq:i0}
				-L(0) \leq \lim_{t\to\infty} \left\lbrace 2(\beta_1+\cdots \beta_n)t+ \log \int_{\Omega} e^{-\phi(z)-\psi(e^{\beta_1 t}z_1, 
					\cdots, e^{\beta_1t}z_n)} \right\rbrace = \log \int_{\mathbb C^n} e^{-\phi(0)-\psi},
			\end{equation}
			which gives the estimate \eqref{eq:C-deg-1}. 
		\end{proof}

		\section{Ross-Witt Nystr\"om construction of geodesic rays}
		
		\subsection{Maximal test curve and geodesic ray}

		\begin{definition}\label{de:maxtc}  Let $(L, e^{-\phi})$ be a pseudoeffective line bundle over a complex manifold $X$. We call $v\in {\rm TC}_\phi$ a maximal test curve if $P[v_\alpha]=v_\alpha$ for every $\alpha\in\mathbb R$,
			where  the maximal envelope $P[v_\alpha]$ of $v_\alpha$ is defined by ($P[-\infty]:=-\infty$)
			$$
			P[v_\alpha]:=\sup^\star \left\lbrace \sigma\in {\rm PSH}(X,\phi): \sigma\leq 0, \ \sigma\leq v_\alpha+O(1) \ \text{on} \ X \right\rbrace
			$$
			where $\star$ denotes the usc regularization and $O(1)$ means that there exists a constant $C\geq 0$ such that $\sigma \leq v_\alpha+C$ on $X$.  The space of  maximal test curves is denoted by ${\rm C}_\phi$ .
		\end{definition}
		
		\noindent
		\textbf{Remark.} \emph{The main example is the following (see \cite[section 5]{RW18}) Hele-Shaw type maximal test curve (when $(L, e^{-\phi})$  is positive and $X$ is a compact Riemann surface)
			\begin{equation}\label{eq:valpha-lambda-1}	
				v_\alpha=\sup\{\sigma\in {\rm PSH}(X,\phi):  \sigma\leq 0 \, \text{and $\nu_{x}(\sigma)\geq \alpha$}\},
			\end{equation} 
			where 
			\begin{equation}\label{eq:lelongnumber}	\nu_{x}(\sigma):=\liminf_{z\to x} \frac{\sigma(z)}{\log(|z-x|^2)}
			\end{equation} denotes the Lelong number of $\sigma$ at $x$. In general, Theorem 3.14 in \cite{DDNL25} implies that $P[v_\alpha]$ is maximal (i.e. $P[P[v_\alpha]]=P[v_\alpha]$) for all $v\in {\rm TC}_\phi$ in case $X$ is compact K\"ahler and $i\partial\dbar \phi \geq \omega$ for some K\"ahler form $\omega$ on $X$. Thus it is fairly easy to construct maximal bounded test curves.}

		\medskip

		The corresponding maximal subgeodesic ray is usually called a \emph{geodesic ray}. The following is a definition for (possibly non-compact) $X$ from \cite{Tes24} (see \cite{RW14} for the compact case).
		
		\begin{definition}\label{de:max}  Let $(L, e^{-\phi})$ be a pseudoeffective line bundle over a complex manifold $X$. We call $u\in {\rm SR}_\phi$ a geodesic ray if $P[u]=u$, where the maximal envelope $P[u]$ of $u$ is defined by 
			$$
			P[u]:=\sup^\star\left\lbrace \psi\in {\rm SR}_\phi: \lambda_\psi\leq \lambda_u, \ \psi \leq u +O(1) \ \text{on} \ X\times \mathbb H  \right\rbrace.
			$$
			The space of geodesic rays is denoted by ${\rm R}_\phi$.
		\end{definition}

		The main theorem in the Ross-Witt Nystr\"om correspondence theory is the following result (see \cite[Theorem 1.1]{RW14}, \cite[Theorem 2.9]{Dar17} and \cite[Theorem 1.2]{DDNL18} for the proof of the compact case, the non-compact case can be found in \cite{Tes24}):

		\begin{theorem}\label{th:RW} Let $(L, e^{-\phi})$ be a pseudoeffective line bundle over a complex manifold $X$. Then the partial Legendre transforms $v\mapsto \hat v$ and $u\mapsto \check u$ in \eqref{eq:plt} define two bijections:
			$$
			{\rm TC}_\phi 	\to {\rm SR}_\phi, \ \ {\rm C}_\phi \to {\rm R}_\phi,
			$$
			with 
			$$
			\lambda_v =\lambda_{\hat v}, \ \lambda_u=\lambda_{\check u}, \ \check{\hat v} =v,\ \text{and} \ \hat{\check u}=u.
			$$
			Sometimes we write $\mathcal L(v) =\hat v,\ \mathcal L^{-1}(u) =\check u$ and 
			$
			\mathcal L(v_\alpha) =\hat v_t, \  \mathcal L^{-1}(u_t) =\check u_\alpha.
			$
		\end{theorem}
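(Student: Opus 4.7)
The plan is to establish, in order: (i) both Legendre transforms map into the claimed target spaces, (ii) they are mutually inverse, (iii) critical values correspond, and (iv) the restricted bijection between maximal test curves and geodesic rays holds.

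For (i) the forward direction $v \mapsto \hat v$: for each fixed $\alpha \in \mathbb{R}$, the function $(z,\tau) \mapsto v_\alpha(z) + \phi(z) + \alpha \,\mathrm{Re}\,\tau$ is psh on $X \times \mathbb{H}$ because $v_\alpha + \phi$ is psh in $z$ and the linear term is pluriharmonic in $\tau$. The supremum over $\alpha$ is therefore psh after usc regularization; local upper boundedness uses the normalization $\sup\{\alpha : v_\alpha \equiv 0\} = 0$ together with $\lambda_v < \infty$, which forces $\hat v_t \le \lambda_v t$. The boundary behavior $\hat v_t \to 0$ as $t \to 0^+$ follows from $v_\alpha \le 0$ for $\alpha \le 0$ and $\lim_{\alpha \to -\infty} v_\alpha = 0$. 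For the reverse map $u \mapsto \check u$, Kiselman's minimum principle applied to the psh function $u_{\mathrm{Re}\,\tau}(z) - \alpha \,\mathrm{Re}\,\tau + \phi(z)$ on $X \times \mathbb{H}$ yields $\check u_\alpha + \phi \in {\rm PSH}(X)$; concavity, monotonicity and uppersemicontinuity in $\alpha$ are automatic since $\check u_\alpha$ is an infimum of affine functions in $\alpha$.

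For (ii) the key observation is that, at each fixed $z$, the maps $t \mapsto u_t(z)$ and $\alpha \mapsto v_\alpha(z)$ are respectively convex and concave functions of a single real variable, with the appropriate boundary behavior to make the Fenchel biconjugate equal the original. Hence $\hat{\check u}_t(z) = u_t(z)$ and $\check{\hat v}_\alpha(z) = v_\alpha(z)$ pointwise in $z$; one then has to check that no usc regularization is lost in $z$, which is the reason the double transform preserves the class. Step (iii) is a direct computation: $\check u_\alpha(z)$ drops to $-\infty$ exactly when $\alpha$ exceeds $\lim_{t \to \infty} u_t(z)/t$, so $\lambda_{\check u} = \lambda_u$, and symmetrically $\lim_{t \to \infty} \hat v_t(z)/t = \sup\{\alpha : v_\alpha(z) > -\infty\}$, whose sup over $z$ equals $\lambda_v$.

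The main obstacle I anticipate is step (iv), the maximality correspondence, which requires showing that the envelope operations $P[\cdot]$ on the two sides intertwine with the partial Legendre transform. The subtlety is that the defining conditions of $P[v_\alpha]$ and $P[u]$ use superficially different $+O(1)$ conditions (pointwise-in-$\alpha$ versus uniform-in-$(z,\tau)$). The strategy is to show, for $v \in \mathrm{TC}_\phi$, that $\widehat{P[v]_\bullet} = P[\hat v]$ by checking the two inclusions: the $\le$ direction follows because any subgeodesic ray $\psi \le \hat v + O(1)$ has Legendre transform $\check\psi_\alpha \le v_\alpha + O(1)$, hence $\check\psi_\alpha \le P[v_\alpha]$; the reverse uses that the Legendre transform is order-preserving and the identity $\check{\hat v} = v$ from (ii). For compact $X$ this argument is completed in \cite{RW14,Dar17,DDNL18}; the non-compact adjustments (where the envelope is taken over subgeodesic rays with $\lambda_\psi \le \lambda_u$) are handled in \cite{Tes24}, whose proof we invoke.
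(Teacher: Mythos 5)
Your proposal matches the paper's treatment: the paper does not prove Theorem \ref{th:RW} itself but quotes it from the literature (\cite[Theorem 1.1]{RW14}, \cite[Theorem 2.9]{Dar17}, \cite[Theorem 1.2]{DDNL18} for compact $X$, and \cite{Tes24} for the non-compact case, with the membership statements $\hat v\in{\rm SR}_\phi$, $\check u\in{\rm TC}_\phi$ taken from \cite[Proposition 3.6]{DZ22}), and you likewise reduce the substantive points --- the maximality correspondence ${\rm C}_\phi\to{\rm R}_\phi$ and the non-compact adjustments --- to those same sources. The elementary Legendre-duality steps you sketch are fine, with the one caveat that the fact that $\sup_{\alpha}\{v_\alpha+\alpha t\}$ is already usc (so that no regularization is needed in \eqref{eq:plt} and the correspondence is an exact bijection, not one up to negligible sets) is itself part of what the cited results establish rather than something your sketch settles.
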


		\subsection{Canonical weak K\"ahler deformation to the normal bundle} Let $(L, e^{-\phi})$ be a positive line bundle over an $n$-dimensional compact complex manifold $X$. Let $Y$ be a codimension $m$ ($m\geq 1$) closed complex submanifold $Y$ of $X$. We call 
		\begin{equation}\label{eq:nuY}
			\nu_Y(\sigma):=\inf_{y\in Y} \nu_y(\sigma)
		\end{equation}
		the Lelong  number of $\sigma\in {\rm PSH}(X, \phi)$ along $Y$.  Similar to \eqref{eq:valpha-lambda-1}, we define
		\begin{equation}\label{eq:valphaY}
			v_\alpha=\sup\{\sigma\in {\rm PSH}(X,\phi):  \sigma\leq 0 \, \text{and $\nu_{Y}(\sigma)\geq \alpha$}\}.
		\end{equation}
		The critical value
		\begin{equation}\label{eq:valphaY-1}
			\lambda_v:=\inf\{\alpha\in\mathbb R: v_\alpha\equiv-\infty\} =\sup\{\nu_{Y}(\sigma): \sigma\in{\rm PSH}(X,\phi)\} <\infty
		\end{equation}
		is known as the pseudoeffective threshold of $L$ along $Y$. By comparing with the local model, we know that $\nu_Y({\rm usc}(v_\alpha))\geq \alpha$, which implies that ${\rm usc}(v_\alpha)=v_\alpha$. Moreover, $\sigma\leq v_\alpha+O(1)$ implies that  $\nu_Y(\sigma) \geq \alpha$, thus we have $P[v_\alpha]=v_\alpha$ in Definition \ref{de:maxtc}.
		%\begin{equation}\label{eq:valphaY-2}
		%	P[v_\alpha]=v_\alpha,
		%\end{equation}
		%and $v\in {\rm C}_\phi$. 
		Similar to the definition  \eqref{eq:s-no-zero} of $\phi_{\mathbb C^*}$ from \eqref{eq:AD0}, we define
		\begin{equation}\label{eq:phi-cstar-1}
			\phi_{\mathbb C^*}(z,s):=\phi(z)+ \hat v_{-\log|s^2|}(z), 
		\end{equation}
		for $z\in X$ and $0<|s|<1$  and
		\begin{equation}\label{eq:phi-cstar-2}
			\phi_{\mathbb C^*}(z,s):=\phi(z)
		\end{equation}
		for  $z\in X$ and $|s|\geq 1$. Similar to \eqref{eq:s-to-zero}, $\phi_{\mathbb C^*}$ extends to an $S^1$-invariant psh potential, say $\phi_{\mathbb C}$, for the line bundle $L_{\mathbb C}$ over the total space $X_{\mathbb C}$ (of the deformation of $X$ to $NY$).
		
		\begin{definition}[Following \cite{WN21}]\label{de:wk} We call $(L_{\mathbb C}, e^{-\phi_{\mathbb C}})$ defined above the canonical weak K\"ahler deformation of $(L,e^{-\phi})$ to the normal bundle $NY$ of $Y$ and $\phi_{Y}:=\phi_{\mathbb C}|_{X_0}$ the $NY$-limit of $\phi$ (recall that $X_0=NY$ by \eqref{eq:NY-def-1}). 
		\end{definition}

		\noindent
		\textbf{Remark.} \emph{Put
			\begin{equation}\label{eq:Omega-c}
				\omega_{\mathbb C}=i\partial\dbar \phi_{\mathbb C},
			\end{equation}
			we know that $\omega_{\mathbb C}$ is a $S^1$-invariant closed positive $(1, 1)$-current on $X_{\mathbb C}$,  $\omega_{\mathbb C}|_{X_1}=i\partial\dbar \phi$ and $\omega_{\mathbb C}|_{X_0}=i\partial\dbar \phi_{Y}$. By Theorem B in \cite{WN21}, we have (here we identify $X_0$ with $NY$)
			\begin{equation}\label{eq:V-can1}
				\int_{NY} (i\partial\dbar\phi_{Y})^n= \int_X (i\partial\dbar\phi)^n.
			\end{equation}
			Apply our second main theorem to $(L_{\mathbb C}, e^{-\phi_{\mathbb C}})$, we obtain:}

		\begin{theorem}\label{th:RWThC'} Let $(L, e^{-\phi})$ be a positive line bundle over an $n$-dimensional compact complex manifold $X$. Let $Y$ be a codimension $m$ ($m\geq 1$) closed complex submanifold $Y$ of $X$. Let $F_Y$ be a holomorphic section of $(K_X+L)|_Y$. If
			\begin{equation}\label{eq:RWThC'-int-condition}
				||F_Y||^2:=	\int_{NY} i^{n^2} F_Y \wedge \overline{F_Y} \, e^{-\phi_{Y}} <\infty ,
			\end{equation}
			then there is a holomorphic section $F_X$ of $K_{X}+L$ such that 
			\begin{equation}\label{eq:RWThC'-estmate}
				F_X|_Y=F_Y, \ \ \int_{X} i^{n^2} F_X \wedge \overline{F_X} \, e^{-\phi} \leq  	||F_Y||^2. 
			\end{equation}
		\end{theorem}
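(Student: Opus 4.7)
The plan is to deduce the statement directly from the Second Main Theorem applied to the canonical weak K\"ahler deformation $(L_{\mathbb C}, e^{-\phi_{\mathbb C}})$ of Definition \ref{de:wk}, living over the total space $X_{\mathbb C}$ of the admissible degeneration of $X$ to $Y$ from Example 3. Most hypotheses become immediate under this choice: the degeneration is admissible by Example 3; by \eqref{eq:phi-cstar-2} one has $\phi_1=\phi$, and by Definition \ref{de:wk} one has $\phi_0=\phi_Y$, so the integrability input \eqref{eq:RWThC'-int-condition} translates verbatim into the $||F_Y||^2<\infty$ condition of the Second Main Theorem, with $\mu_0^*(F_Y)$ supplying the trivial extension to $X_0=NY$ via \eqref{eq:cstar-deg-Y-4} and \eqref{eq:SMT}.

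The $S^1$-pseudoeffectivity of $(L_{\mathbb C},e^{-\phi_{\mathbb C}})$ is also built into the construction. On $|s|<1$, the potential $\phi_{\mathbb C}(z,s)=\phi(z)+\hat v_{-\log|s|^2}(z)$ depends on $s$ only through $|s|^2$, hence is $S^1$-invariant, and is psh in $(z,s)$ because $\hat v$ is a subgeodesic ray (Theorem \ref{th:RW}) and $-\log|s|^2$ is pluriharmonic. The pieces \eqref{eq:phi-cstar-1} and \eqref{eq:phi-cstar-2} glue across $|s|=1$ because $\hat v_0=\sup_\alpha v_\alpha=v_0=0$, using that $v_\alpha\equiv 0$ for $\alpha\leq 0$ from \eqref{eq:valphaY}. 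The only nontrivial part of Definition \ref{de:wk} is the extension of $\phi_{\mathbb C^*}$ across the zero fiber $X_0$, which is precisely what is granted by \cite{WN21}.

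The main technical obstacle is verifying the almost Stein hypothesis, since $X$ compact forces $X_{\mathbb C}$ itself not to be Stein. Here I would exploit the ampleness of $L$: by Bertini applied to $L^{\otimes k}$ for $k$ large, pick a smooth divisor $Z\subset X$ meeting $Y$ transversally in a smooth proper subvariety of $Y$, and set $Z_{\mathbb C}:=\mu^{-1}(Z\times\mathbb C)$. This $Z_{\mathbb C}$ is smooth, $\mathbb C^*$-invariant (since $Z\times\mathbb C$ is invariant under $a\cdot(z,s)=(z,a^{-1}s)$), and meets $Y\times\mathbb C$ properly. Over $\mathbb C^*$, its complement is $(X\setminus Z)\times\mathbb C^*$, Stein because the ample divisor $Z$ has affine complement; over $s=0$, the complement restricts to $NY|_{Y\setminus(Z\cap Y)}$, a holomorphic vector bundle over a Stein base and therefore Stein. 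Promoting these fibrewise statements to Steinness of $X_{\mathbb C}\setminus Z_{\mathbb C}$ requires producing an $S^1$-invariant psh exhaustion, which I would assemble from $\log||\sigma||^{-2}$ (with $\sigma$ the canonical section of $\mu^*\mathcal O(Z\times\mathbb C)$ cutting out $Z_{\mathbb C}$, measured in an $S^1$-invariant Hermitian metric), a convex function of $|s|^2$, and a local $|\xi^l|^2$-term in each chart $\mathcal B_l$ near $Y\times\{0\}$, in the spirit of the exhaustion $E(s\xi)+|s|^2+|\xi|^2$ appearing in the proof of Theorem B. Once this exhaustion is in place, the Second Main Theorem delivers $F_X\in H^0(X,K_X+L)$ with $F_X|_Y=F_Y$ and the estimate \eqref{eq:RWThC'-estmate}.
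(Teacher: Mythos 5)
Your proposal is exactly the paper's argument: Theorem \ref{th:RWThC'} is obtained there by applying the Second Main Theorem to the canonical weak K\"ahler deformation $(L_{\mathbb C},e^{-\phi_{\mathbb C}})$ of Definition \ref{de:wk} over the deformation of $X$ to $NY$, with $\phi_1=\phi$, $\phi_0=\phi_Y$ and $\mu_0^*(F_Y)$ as the trivial extension on the zero fiber. Your Bertini-type choice of $Z\in|kL|$ and the ensuing discussion only fill in the almost-Stein hypothesis that the paper leaves implicit, so the route is the same.
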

		
		The estimate \eqref{eq:RWThC'-estmate}  is sharp in certain toric variety case.

		\begin{proposition}\label{pr:toric} Assume that $X$ is a toric manifold and $\phi$ is $(S^1)^n$-invariant. Let $Y=\{x\}$ be a $(\mathbb C^*)^n$-invariant point of $X$. Then \eqref{eq:RWThC'-estmate} is an equality.	
		\end{proposition}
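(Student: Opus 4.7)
The plan is to use the $(\mathbb{C}^*)^n$-symmetry to reduce both sides of \eqref{eq:RWThC'-estmate} to norms of a single weight-space section, and then verify equality by identifying $\phi_Y$ with an explicit toric local model at $x$.

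By $(S^1)^n$-invariance of $\phi$, the space $H^0(X,K_X+L)$ admits an orthogonal weight decomposition $\bigoplus_\alpha H^0_\alpha$ with respect to the $L^2$-pairing weighted by $e^{-\phi}$, where each $H^0_\alpha$ is at most one-dimensional by toric geometry. Since $x$ is $(\mathbb{C}^*)^n$-fixed, the fiber $(K_X+L)_x$ is a one-dimensional representation of a unique weight $\alpha_x$, and only $H^0_{\alpha_x}$ evaluates nontrivially at $x$. Hence the minimal-norm extension of $F_Y$ is $F_X=c\cdot e_{\alpha_x}$ where $e_{\alpha_x}$ spans $H^0_{\alpha_x}$. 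The same weight decomposition on $NY=T_xX$, endowed with the induced $(\mathbb{C}^*)^n$-action, shows that the trivial extension $\mu_0^*(F_Y)$ is the unique weight-$\alpha_x$ section equalling $F_Y$ at the origin, namely a constant multiple of $d\xi_1\wedge\cdots\wedge d\xi_n$ in $(S^1)^n$-equivariant coordinates $\xi$. The proposition therefore reduces to the identity $\|e_{\alpha_x}\|_\phi^2=\|\mu_0^*(e_{\alpha_x})\|_{\phi_Y}^2$.

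For this identity, I pass to toric polar coordinates near $x$, writing $\phi=\varphi(\rho)$ with $\rho_j=\log|z_j|^2$ and $\varphi$ convex, and compute $\phi_Y$ from the canonical weak K\"ahler deformation $\phi_{\mathbb{C}}(z,s)=\phi(z)+\hat v_{-\log|s|^2}(z)$. The coordinate change $z=s\xi$ in the limit $s\to 0$ identifies the $NY$-limit $\phi_Y$ with the toric local model of $(L,\phi)$ at the vertex of the moment polytope corresponding to $x$, under which both weighted norms reduce to the same Laplace-type integral over the toric log-coordinates, proving the equality.

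The principal difficulty is this explicit identification of $\phi_Y$ with the toric local model at the vertex: it requires combining the Ross--Witt Nystr\"om description of $\hat v_t$ in terms of the Hele-Shaw test curve $v_\alpha$ at $x$ with the Legendre duality between the symplectic and K\"ahler potentials in toric geometry. Alternatively, one may use Berndtsson--P\u aun positivity for the one-dimensional weight-$\alpha_x$ direct image $\mathcal{E}_{\alpha_x}\to\mathbb{C}$ on the deformation $X_{\mathbb{C}}$: uniqueness of the weight-$\alpha_x$ extension on every fiber together with the $S^1$-invariance of $\phi_{\mathbb{C}}$ forces the minimal-extension norm $\|F_Y\|_s^2$ to be constant in $|s|$, so the value at $s=1$ equals the limit at $s=0$, giving the desired equality.
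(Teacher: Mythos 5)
Your reduction via the $(S^1)^n$-weight decomposition is fine as far as it goes (it plays the same role as the submean inequality in the paper's argument, which bounds the left side of \eqref{eq:RWThC'-estmate} from below by $|F_x|^2\int_{\mathbb C^n}e^{-\phi}$ without needing to identify the minimizer), but the step you yourself flag as the principal difficulty is a genuine gap, not a technicality. The identity you reduce to is $\int_{\mathbb C^n}e^{-\phi}=\int_{\mathbb C^n}e^{-\phi_Y}$, i.e.\ essentially $\phi_x=\phi$, and your justification --- ``the coordinate change $z=s\xi$ in the limit $s\to 0$ identifies $\phi_Y$ with the toric local model'' --- ignores the envelope term: by construction $\phi_{\mathbb C}(z,s)=\phi(z)+\hat v_{-\log|s|^2}(z)$, where $\hat v_t$ is the Legendre transform of the maximal (Hele--Shaw type) test curve $v_\alpha$ of \eqref{eq:valphaY}, so the central-fiber potential is governed by these envelopes and cannot be read off by rescaling $\phi$ alone. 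Computing $v_\alpha$ explicitly on a toric manifold is exactly the nontrivial content you would have to supply. The paper sidesteps this computation: it invokes the characterization \eqref{eq:phiWN} of $\phi_x$ by homogenizations of global sections (from \cite[section 8]{WN18}), produces the monomial sections $f^u=e^{-\frac k2\phi_P^*(u/k)}z^u$ with sup-norm one, and uses $(\phi_P^*)^*=\phi_P$ to get only the one-sided bound $\phi_x\geq\phi$; this bounds the right side of \eqref{eq:RWThC'-estmate} above by $|F_x|^2\int_{\mathbb C^n}e^{-\phi}$, and equality (together with $\phi_x=\phi$) then follows by sandwiching against the submean lower bound and the inequality already provided by Theorem \ref{th:RWThC'}. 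If you want to salvage your route, prove the inequality $\phi_Y\geq\phi$ by this section-theoretic argument rather than attempting a direct identification of $\phi_Y$.

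Your alternative route is circular as stated. Berndtsson--P\u aun positivity plus $S^1$-invariance in $s$ only gives that $\|F_Y\|_s^2$ depends on $|s|$ and is monotone (decreasing in $|s|$); constancy in $|s|$ is equivalent to the equality you are trying to prove, and neither the uniqueness of the weight-$\alpha_x$ extension on each fiber nor the extra torus symmetry forces it --- on the fibers $X_s$, $s\neq 0$, the weight $\phi+\hat v_{-\log|s|^2}$ genuinely varies with $s$, so the norm of the unique invariant extension has no a priori reason to be constant. Some additional input, such as the bound $\phi_x\geq\phi$ above, is needed to pin down the value at $s=0$.
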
 
		
		\begin{proof} Let $P \subset [0, \infty)^n$ be the associated Delzant polytope of $(X, L)$. We know that $X$ is a compactification of $\mathbb C^n$ and $x$ corresponds to the origin in $\mathbb C^n$. We can further write
			$$
			\phi(z)= \phi_P(\log|z_1^2|, \cdots, \log|z_n^2|),
			$$
			where $\phi_P$ is a convex increasing function on $\mathbb R^n$ and $\phi(0) = \phi_P(-\infty, \cdots, -\infty)$. We also know that holomorphic sections of $kL$ can be written as linear combinations of $z^u:=z_1^{u_1} \cdots z_n^{u_n}$, where $u\in kP\cap \mathbb Z^n$. Put
			$$
			f^u(z):= e^{-\frac k2 \phi_P^* (\frac u k)} z^u, \ \ u\in kP\cap \mathbb Z^n,
			$$
			where
			$$
			\phi_P^*(a):=\sup_{b\in \mathbb R^n} \{a\cdot b-\phi_P(b)\}.
			$$
			Then $f^u$ corresponds to a holomorphic section of $kL$ over $X$ with
			$$
			\sup_{X} |f^u|^2 e^{-k\phi} =e^{-k \phi_P^* (\frac u k)}   \sup_{b\in \mathbb R^n} e^{u\cdot b-k\phi_P(b)} = 1.
			$$
			In this $Y=\{x\}$ case, we write $\phi_{Y}$ as $\phi_x$. By \cite[section 8]{WN18}, we have
			\begin{equation}\label{eq:phiWN}
				\phi_x=\sup^\star \left\lbrace \frac1k \log |f^2_{hom, x}|: f \in H^0(X, kL),  \text{$\sup_X |f|^2 e^{-k\phi} \leq 1$, $k\in\mathbb N$}\right\rbrace,
			\end{equation}
			where $f_{hom, x}$ is the homogenization of $f$ around $x$ defined by
			\begin{equation}\label{eq:Ghom}
				f_{hom, x}(\xi):=\lim_{s\to 0}  \frac{f(s\xi)}{s^{{\rm ord}_x f}}, \  \ \xi\in \mathbb C^n.
			\end{equation}
			Notice that 
			$
			f^u_{hom, x}(\xi)= e^{-\frac k2 \phi_P^* (u/k)} \xi^u
			$
			gives (see \eqref{eq:phiWN})
			$$
			\phi_{x}(\xi) \geq \sup_{u\in kP\cap \mathbb Z^n} \left\lbrace \frac1k\log(|\xi^{u}|^2) -\phi_P^{*} \left(\frac u k\right) \right\rbrace.
			$$
			Hence $(\phi^*_P)^*=\phi_P$ gives $
			\phi_{x}(\xi) \geq \phi(\xi).
			$
			Thus 
			$$
			\text{the right hand side of \eqref{eq:RWThC'-estmate}} \leq |F_x|^2 \int_{\mathbb C^n} e^{-\phi(\xi)}\, i^{n^2}d\xi\wedge \overline{d\xi}.
			$$
			On the other hand, since $\phi$ is $(S^1)^n$-invariant and $F_X(x)=F_x$, the submean inequality gives
			$$
			\text{the left hand side of \eqref{eq:RWThC'-estmate}} \geq   |F_x|^2 \int_{\mathbb C^n} e^{-\phi(\xi)}\, i^{n^2}d\xi\wedge \overline{d\xi}.
			$$
			Hence \eqref{eq:RWThC'-estmate} is an equality and $\phi_{x} = \phi$.
		\end{proof}
		
		\section{Ross-Witt Nystr\"om correspondence version of the Ohsawa-Takegoshi theorem}
		
		We shall prove the first main theorem in this section. The main idea is to prove a multiplier ideal description of the Berndtsson filtration associated to a subgeodesic ray. Then the first main theorem follows directly from a monotonicity result of Berndtsson in \cite{Bern20} (see \cite{BL16} for the background).

		\subsection{Multiplier ideal description of the Berndtsson filtration}
		
		With the notation in the first main theorem, by Theorem \ref{th:RW}, $\hat v_t:=\sup_{\alpha\in \mathbb R} \{v_\alpha+\alpha t\}$ is a subgeodesic ray in ${\rm PSH}(X, \phi)$. Hence Berndtsson's result in \cite{Bern06, Bern09} implies that the following family of norms
		\begin{equation}\label{eq:tnorm-1}
			||F||^2_t:= \int_X i^{n^2} F\wedge \bar F \, e^{-\phi-\hat v_t}, \ t>0,
		\end{equation}
		defines a positive curved metric on the product bundle  $ H^0(X, \mathcal O(K_X+L)) \times \mathbb H$, where $\mathbb H$ denotes the right half plane, $t={\rm Re}\,\tau$ for $\tau\in\mathbb H$. Our main observation is the following multiplier ideal description of the Berndtsson filtration $S_\alpha$ in \cite{Bern20}.
		
		\begin{lemma}\label{le:key}  Put	
			\begin{equation}\label{eq:salpha}
				S_\alpha:=\left\lbrace F\in H^0(X, \mathcal O(K_X+L)): \int_{0}^\infty ||F||_t^2 e^{t\alpha} \,dt<\infty  \right\rbrace.
			\end{equation}
			Then 
			\begin{equation}\label{eq:s'alpha}
				S_\alpha \subset H^0(X, \mathcal O(K_X+L)\otimes \mathcal I(v_\alpha)), \ \ 0<\alpha<\lambda_v,
			\end{equation}		
			where $\lambda_v$ is the critical value of $v_\alpha$ defined in Definition \ref{de:btc}.
		\end{lemma}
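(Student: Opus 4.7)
The plan is to reduce the inclusion to a pointwise comparison between $J(z):=\int_0^\infty e^{-\hat v_t(z)+t\alpha}\,dt$ and $e^{-v_\alpha(z)}$, and then conclude by Tonelli. Interchanging the order of integration,
$$\int_0^\infty \|F\|_t^2\,e^{t\alpha}\,dt \;=\; \int_X i^{n^2}F\wedge\overline F\,e^{-\phi}\,J(z),$$
so $F\in S_\alpha$ is equivalent to the right-hand side being finite. The lemma will therefore follow from the pointwise estimate
$$e^{-v_\alpha(z)}\;\leq\;(\lambda_v-\alpha)\,J(z),\qquad z\in X,\qquad(\star)$$
since on the compact manifold $X$ we then have $\int_X |F|^2 e^{-\phi-v_\alpha}\leq(\lambda_v-\alpha)\int_0^\infty\|F\|_t^2 e^{t\alpha}\,dt<\infty$, which is precisely the condition $F\in H^0(X,\mathcal O(K_X+L)\otimes\mathcal I(v_\alpha))$.

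To prove $(\star)$, fix $z$ and write $h(t):=\hat v_t(z)$. By Definition \ref{de:sg}, $h$ is convex on $[0,\infty)$ with $h(0)=0$; by Theorem \ref{th:RW}, $v_\alpha(z)=\inf_{t>0}(h(t)-t\alpha)$; and the asymptotic slope $L(z):=\lim_{t\to\infty}h(t)/t$ satisfies $L(z)\leq\lambda_{\hat v}=\lambda_v$, the final equality being part of the Ross-Witt Nystr\"om correspondence. Assume first $L(z)>\alpha$. Then the infimum defining $v_\alpha(z)$ is attained at some finite $t_*\in[0,\infty)$. The right derivative $h'_+$ is non-decreasing with supremum $L(z)$, so $h'_+\leq L(z)$ everywhere, and integrating gives $h(t)\leq h(t_*)+L(z)(t-t_*)$ for $t\geq t_*$. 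Hence
$$-h(t)+t\alpha\;\geq\;-v_\alpha(z)-(L(z)-\alpha)(t-t_*),\qquad t\geq t_*,$$
and integrating the exponential of this inequality from $t_*$ to $\infty$ yields
$$J(z)\;\geq\;\int_{t_*}^\infty e^{-h(t)+t\alpha}\,dt\;\geq\;\frac{e^{-v_\alpha(z)}}{L(z)-\alpha}\;\geq\;\frac{e^{-v_\alpha(z)}}{\lambda_v-\alpha}.$$
In the degenerate case $L(z)\leq\alpha$ we have $h(t)\leq L(z)t\leq\alpha t$, so $e^{-h(t)+t\alpha}\geq 1$, forcing $J(z)=+\infty$ and simultaneously $v_\alpha(z)=-\infty$; $(\star)$ is then trivial, and since $\int|F|^2 e^{-\phi}J$ is finite, $|F|$ vanishes a.e.\ on this locus, so it contributes nothing to either side of the integrated estimate.

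The main obstacle is the pointwise bound $(\star)$. Its proof is not hard once one uses the linear majorant $h(t)\leq h(t_*)+L(z)(t-t_*)$ to decouple the exponential integrand, but it relies essentially on the a priori slope bound $L(z)\leq\lambda_v$ coming from the critical value. Without this global bound the constant $1/(\lambda_v-\alpha)$ would be replaced by the $z$-dependent $1/(L(z)-\alpha)$, which could blow up where the asymptotic slope approaches $\alpha$; the critical value $\lambda_v$ is precisely the device that converts the pointwise inequality into a uniform one, and hence produces an honest global $L^1$-bound implying multiplier-ideal membership.
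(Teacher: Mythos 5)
Your proof is correct and takes essentially the same route as the paper's: both arguments reduce, via Tonelli, to a pointwise lower bound of $\int_0^\infty e^{-\hat v_t(z)+t\alpha}\,dt$ by a constant times $e^{-v_\alpha(z)}$, obtained from a (near-)minimizer of the partial Legendre transform together with the uniform slope bound $\leq\lambda_v$ on $t\mapsto\hat v_t(z)$ (the paper phrases this bound as the monotonicity of $\hat v_t-t\lambda_v$ and integrates over a unit interval, getting the constant $e^{\lambda_v+1-\alpha}$ in place of your $\lambda_v-\alpha$). The only inaccuracy is the claim that $v_\alpha(z)=-\infty$ whenever $L(z)\leq\alpha$ (this can fail when $L(z)=\alpha$), but it is harmless since $J(z)=+\infty$ already makes $(\star)$ trivial at such points.
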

		
		\begin{proof}  
			We shall follow the proof of Lemma 4.3 in \cite{DX22} (see also Theorem 4.1 in \cite{DZ22}). The idea is: $
			v_\alpha=\inf_{t>0} \{\hat v_t -t\alpha\}
			$ implies that for every $x\in X$ with $v_\alpha(x)>-\infty$, there exists $t_0>0$ such that
			$$
			v_\alpha(x)+1\geq \hat v_{t_0}(x) -t_0 \alpha,
			$$
			or equivalently
			$$
			v_\alpha(x)+1 -t_0(\lambda_v-\alpha)\geq \hat v_{t_0}(x) -t_0 \lambda_v.
			$$
			Since $\hat v_t -t\lambda_v$ is decreasing in $t$, we get
			$$
			v_\alpha(x)+1 -t_0(\lambda_v-\alpha)\geq \hat v_{t}(x) -t \lambda_v, \ \ \ \forall \ t_0 \leq t \leq t_0+1,
			$$
			which gives
			$$
			v_\alpha(x)+1+\lambda_v-\alpha \geq \hat v_t(x) -t \alpha, \ , \ \ \ \forall \ t_0 \leq t \leq t_0+1.
			$$
			Thus for $F\in S_\alpha$ we have (note that the Lebesgue measure of $\{v_\alpha=-\infty\}$ is zero)
			$$
			\infty > \int_{v_\alpha>-\infty} i^{n^2} F\wedge \bar F \, e^{-\phi} \int_0^\infty e^{-\hat v_t+t\alpha} \,dt \geq e^{\alpha-\lambda_v-1} \int_X i^{n^2} F\wedge \bar F \, e^{-\phi-v_{\alpha}}. 
			$$
			Hence $S_\alpha \subset H^0(X, \mathcal O(K_X+L)\otimes \mathcal I(v_{\alpha}))$ and \eqref{eq:s'alpha} is proved.
		\end{proof}
		
		\medskip

		\noindent
		\textbf{Remark.} \emph{One may further use the strong openness theorem to prove that 
			\begin{equation}\label{eq:s'alpha1}
				S_\alpha = H^0(X, \mathcal O(K_X+L)\otimes \mathcal I(v_\alpha)), \ \ 0<\alpha<\lambda_v.
			\end{equation}	
			In fact, 
			$
			v_\alpha=\inf_{t>0} \{\hat v_t -t\alpha\}
			$
			implies that
			$$
			\int_{0}^\infty ||F||_t^2 e^{t\alpha} \,dt = \int_X i^{n^2} F\wedge \bar F \, e^{-\phi} \int_0^\infty e^{-\hat v_t+t\alpha} \,dt \leq \frac1\varepsilon\int_X i^{n^2} F\wedge \bar F \, e^{-\phi-v_{\alpha+\varepsilon}},
			$$
			which gives
			$$
			H^0(X, \mathcal O(K_X+L)\otimes \mathcal I(v_{\alpha+\varepsilon})) \subset S_\alpha, \ \ \forall \ \varepsilon>0.
			$$
			Note that $v_{\alpha}$ is decreasing in $\alpha$, by the strong openness theorem (see \cite[(3.9)]{GZ15b}) we obtain
			$$
			H^0(X, \mathcal O(K_X+L)\otimes \mathcal I(v_{\alpha})) \subset S_\alpha.
			$$
			Together with the above lemma, \eqref{eq:s'alpha1} follows.}
		
		\subsection{Monotonicity property of the quotient norms}
		
		Since $S_\alpha$ in Lemma \ref{le:key} is a subspace  of $H^0(X, \mathcal O(K_X+L)$, the following  
		natural projection
		$$
		H^0(X, \mathcal O(K_X+L)) \to H^0(X, \mathcal O(K_X+L))/S_\alpha
		$$
		is well defined. We shall write the image of $F\in H^0(X, \mathcal O(K_X+L))$ 
		as $[F]$ and define a family of quotient norms
		\begin{equation}\label{eq:tnorm-quotient}
			||[F]||^2_t:= \inf \left\lbrace ||\tilde F||_t^2: \tilde F -F \in S_\alpha \right\rbrace 
		\end{equation}
		for $[F] \in H^0(X, \mathcal O(K_X+L))/S_\alpha$, where $||\tilde F||_t^2$ is defined in \eqref{eq:tnorm-1}. Our second observation is that Theorem 1.1 in \cite{Bern20} implies the following monotonicity property of the quotient norms.
		
		\begin{proposition}\label{pr:BL-alpha} $e^{\alpha t}||[F]||^2_t $ is increasing in $t$ for $0<\alpha<\lambda_v$ and $F\in H^0(X, \mathcal O(K_X+L))$. 
		\end{proposition}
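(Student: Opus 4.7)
The plan is to deduce the statement directly from \cite[Theorem 1.1]{Bern20}, whose setup considers a positively curved holomorphic vector bundle over $\mathbb H$ with $S^1$-invariant fiber metric together with its filtration by weighted $L^2$ subspaces, and asserts that the resulting quotient norms with the appropriate exponential factor are monotone in $t={\rm Re}\,\tau$. My task is to put our objects into that framework and invoke the theorem.

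First, I would verify the curvature hypothesis. By Theorem \ref{th:RW}, $\hat v\in{\rm SR}_\phi$, so $\phi(z)+\hat v_{{\rm Re}\,\tau}(z)$ is psh in $(z,\tau)\in X\times \mathbb H$ and depends only on ${\rm Re}\,\tau$. Since $X$ is compact and $L$ is positive, $H^0(X,\mathcal O(K_X+L))$ is finite-dimensional. The Berndtsson positivity theorem \cite{Bern06, Bern09}, applied to this psh weight, implies that the trivial holomorphic bundle $E:=H^0(X,\mathcal O(K_X+L))\times\mathbb H$, equipped with the fiberwise Hermitian metric $\|\cdot\|_t$ of \eqref{eq:tnorm-1}, is positively curved in the sense of Nakano and moreover $S^1$-invariant in $\tau$.

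Next, I would match $S_\alpha$ with the Berndtsson filtration. By \eqref{eq:salpha}, $S_\alpha$ is precisely the subspace of constant sections of $E$ belonging to the weighted $L^2$-space with weight $e^{\alpha{\rm Re}\,\tau}$ on $\mathbb H$. Since $H^0(X,\mathcal O(K_X+L))$ is finite-dimensional, $S_\alpha$ is automatically closed, the quotient $H^0(X,\mathcal O(K_X+L))/S_\alpha$ is finite-dimensional, and the infimum in \eqref{eq:tnorm-quotient} is attained. This is exactly the setup in \cite{Bern20}, so \cite[Theorem 1.1]{Bern20} applies and yields that $e^{\alpha t}\|[F]\|_t^2$ is increasing in $t$ for every $[F]\in H^0(X,\mathcal O(K_X+L))/S_\alpha$ and every $0<\alpha<\lambda_v$.

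The only real obstacle is the bookkeeping required for the identification of our situation with Berndtsson's framework, namely verifying both the positive curvature of $(E,\|\cdot\|_t)$ from the subgeodesic ray property of $\hat v$, and the identification of $S_\alpha$ with the weighted $L^2$ filtration on $\mathbb H$. Both steps are straightforward once the subgeodesic ray structure from Theorem \ref{th:RW} and the definition \eqref{eq:salpha} are unpacked, so no additional analysis beyond the invocation of Berndtsson's theorem is needed.
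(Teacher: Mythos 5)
Your first step (positivity of the $\|\cdot\|_t$ metrics on the trivial bundle $H^0(X,\mathcal O(K_X+L))\times\mathbb H$ via \cite{Bern06, Bern09}, plus $S^1$-invariance and finite-dimensionality) matches the paper's setup and is fine. The gap is in the second step: you attribute to \cite[Theorem 1.1]{Bern20} the very statement to be proved, namely that for the subspaces $S_\alpha$ of \eqref{eq:salpha} the quotient norms $e^{\alpha t}\|[F]\|_t^2$ are increasing. That is not what Berndtsson's theorem says, and it is not how the paper uses it. In the form invoked here (and reproved as Lemma \ref{le:key1} in the appendix), \cite[Theorem 1.1]{Bern20} is the characterization of the annihilator filtration: with $V$ the dual space, $\|\cdot\|_{*t}$ the (negatively curved) dual norms, $\alpha(u):=\lim_{t\to\infty}\frac1t\log\|u\|_{*t}^2$ the jumping numbers, $V_\alpha:=\{u:\alpha(u)\le\alpha\}$ and $\mathcal F_\alpha$ as in \eqref{eq:Bo-dual}, the theorem states that $F\in\mathcal F_\alpha$ if and only if $\int_0^\infty\|F\|_t^2e^{t\alpha}\,dt<\infty$. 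So the content of the citation is precisely the identification $S_\alpha=\mathcal F_\alpha$ --- the step you dismiss as ``bookkeeping'' --- and it does not by itself contain any monotonicity assertion about quotient norms.

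What is missing is the short but essential duality argument that converts this identification into Proposition \ref{pr:BL-alpha}: from $S_\alpha=\mathcal F_\alpha$ one gets $(H^0(X,\mathcal O(K_X+L))/S_\alpha)^*=V_\alpha$ as in \eqref{eq:Bo-key1}, hence by finite-dimensional duality
$$
\|[F]\|_t^2=\sup_{u\in V_\alpha\setminus\{0\}}\frac{|u(F)|^2}{\|u\|_{*t}^2};
$$
then negative curvature of the dual metric makes $t\mapsto\log\|u\|_{*t}^2$ convex, and for $u\in V_\alpha$ its asymptotic slope is $\le\alpha$, so $e^{-\alpha t}\|u\|_{*t}^2$ is decreasing, and taking the supremum gives that $e^{\alpha t}\|[F]\|_t^2$ is increasing. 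Note that this monotonicity is genuinely special to the filtration subspaces: for a quotient by an arbitrary subspace the statement is false, which is why identifying $S_\alpha$ with the jumping-number filtration is the heart of the proof rather than routine verification. Your proposal, which claims ``no additional analysis beyond the invocation of Berndtsson's theorem is needed,'' therefore skips both the dual-filtration setup and the convexity/duality step; once these are added, your argument becomes exactly the paper's proof.
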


		\begin{proof} We shall follow the proof of Theorem 1.1 in \cite{Bern20}. Put
			\begin{equation}
				V:= \left(  H^0(X, \mathcal O(K_X+L)) \right)^*,
			\end{equation}
			then the dual norms $||\cdot||_{*t}$ of $||\cdot||_t$, $t={\rm Re}\,\tau$ for $\tau\in \mathbb H$, define a metric of negative curvature on the trivial bundle $V\times \mathbb H$. We call
			$$
			\alpha(u):=\lim_{t\to\infty} \frac{\log(||u||_{*t}^2)}{t}
			$$
			the \emph{jumping number} of $u\in V$ (it is called the negative of the Lelong number in \cite{Bern20}). Since $V$ has finite dimension, the set of jumping numbers has finite elements, let us write it as
			$$
			\{\alpha(u): u\in V\}=\{\alpha_1, \cdots, \alpha_N\}, \ \ \alpha_1<\cdots<\alpha_N.
			$$
			Put
			$$
			V_\alpha:=\{u\in V: \alpha(u) \leq \alpha\}, \  \ V_{-\infty}:=\{0\},
			$$
			then we obtain a filtration of $V$
			$$
			\{0\} =	V_{-\infty} \subset V_{\alpha_1} \subset \cdots \subset V_{\alpha_N}=V.
			$$
			Put
			\begin{equation}\label{eq:Bo-dual}
				\mathcal F_\alpha:=\{F\in H^0(X, \mathcal O(K_X+L)): u(F)=0, \ \forall \ u\in V_\alpha \}.
			\end{equation}
			By Theorem 1.1 in \cite{Bern20} (or Lemma \ref{le:key1} in the appendix), $F\in \mathcal F_\alpha$ if and only if
			$$
			\int_0^\infty ||F||_t^2 e^{t\alpha}\, dt <\infty.
			$$
			Thus we have (recall the definition of $S_\alpha$ in \eqref{eq:salpha})
			\begin{equation}\label{eq:Bo-key}
				\mathcal F_\alpha=S_\alpha,
			\end{equation}
			which gives
			\begin{equation}\label{eq:Bo-key1}
				(H^0(X, \mathcal O(K_X+L))/S_\alpha)^* = V_\alpha.
			\end{equation}
			Since $||\cdot||_{*t}$ has negative curvature, we know that $\log||u||_{*t}$ is convex in $t$ for all $u\in V$. Thus $u\in V_\alpha$ if and only if $||u||_{*t}^2 e^{-t\alpha}$ is decreasing in $t$.	From this decreasing property and \eqref{eq:Bo-key1}, our proposition follows.
		\end{proof}
		
		\subsection{Proof of the first main theorem and the related sharpness criterion}

		\begin{proof}[Proof of the first main theorem]
			Fix $F\in H^0(X, \mathcal O(K_X+L))$. By Proposition \ref{pr:BL-alpha}, for every $t_0>0$ we have
			$$
			e^{\alpha t_0}	||[F]||_{t_0}^2  \leq \lim_{t\to\infty} e^{\alpha t} ||[F]||_{t}^2 \leq \liminf_{t\to\infty}e^{\alpha t} ||F||_{t}^2.
			$$
			By our definition of the quotient norm in \eqref{eq:tnorm-quotient} and Lemma \ref{le:key}, we know there exists a section $F_\alpha\in H^0(X, \mathcal O(K_X+L))$ such that
			$$
			F_\alpha-F\in S_\alpha \subset H^0(X, \mathcal O(K_X+L) \otimes \mathcal I(v_\alpha))
			$$
			and
			$$
			e^{\alpha t_0} 	\int_X i^{n^2} F_\alpha \wedge \overline{F_\alpha} \, e^{-\phi-\hat v_{t_0}} \leq \liminf_{t\to\infty}e^{\alpha t} ||F||_{t}^2.
			$$
			Thus our first main theorem follows by letting $t_0\to 0$ (note that $\lim_{t_0\to 0} \hat v_{t_0} =0$).
		\end{proof}

		\noindent
		\textbf{Remark.} \emph{From the proof, we know that the first main theorem is sharp if and only if
			\begin{equation}\label{eq:sharpA}
				||[F]||_0^2=\lim_{t\to \infty} e^{\alpha t} ||[F]||_t^2= \liminf_{t\to \infty} e^{\alpha t} ||F||_t^2.
			\end{equation}
			By Proposition \ref{pr:BL-alpha}, we know that \eqref{eq:sharpA} is equivalent to that
			\begin{equation}\label{eq:sharpA1}
				||[F]||_0^2=e^{\alpha t} ||[F]||_t^2, \ \ \forall \ t\geq 0; \ \ \ \ \ ||[F]||_0^2=\liminf_{t\to \infty} e^{\alpha t} ||F||_t^2.
			\end{equation}
			From the proof of Proposition 2.2 in \cite{Bern20}, we know that the $||\cdot||_t$ norms defines a flat metric on $H^0(X, K_X+L)\times \mathbb H$ if and only if there exists a basis $F_j$ of $H^0(X, K_X+L)$ such that
			\begin{equation}\label{eq:sharpA2}
				||\sum c_jF_j||_t^2=\sum |c_j|^2e^{-\lambda_j t}, \ \  \forall \ c_j\in \mathbb C,
			\end{equation}
			where $\lambda_1 \leq \cdots \leq \lambda_N$ are constants, in which case we have
			\begin{equation}\label{eq:sharpA3}
				S_\alpha={\rm span}_{\mathbb C}\{F_j: \lambda_j>\alpha\},
			\end{equation}
			by \eqref{eq:salpha} and 
			\begin{equation}\label{eq:sharpA4}
				||[F]||_t^2=\sum_{\lambda_j\leq \alpha} |c_j|^2 e^{-\lambda_j t}
			\end{equation}
			for $F=\sum c_j F_j$. Thus in this case, \eqref{eq:sharpA1} holds for every $F$ if and only if $\alpha=\lambda_1$ is the first jumping number
			\begin{equation}\label{eq:newversion}
				\lambda_1=\sup \left\lbrace \lambda \in \mathbb R: H^0(X, \mathcal O(K_X+L) \otimes \mathcal I(v_\lambda)) = H^0(X, \mathcal O(K_X+L) ) \right\rbrace.
		\end{equation}}
		
		\begin{proposition}\label{pr:sharp} The first main theorem is sharp if the $||\cdot||_t$ norms in \eqref{eq:tnorm-1} defines a flat metric on $H^0(X, K_X+L)\times \mathbb H$  and 
			\begin{equation}\label{eq:sharp}
				\alpha=\sup \left\lbrace \lambda \in \mathbb R: H^0(X, \mathcal O(K_X+L) \otimes \mathcal I(v_\lambda)) = H^0(X, \mathcal O(K_X+L) ) \right\rbrace.
			\end{equation}
		\end{proposition}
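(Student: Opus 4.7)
My plan is to turn the discussion immediately preceding the proposition into a rigorous argument. First I would invoke the flat metric hypothesis, as in the proof of Proposition~2.2 of \cite{Bern20}, to produce a basis $F_1,\dots,F_N$ of $H^0(X,\mathcal O(K_X+L))$ which diagonalizes the $t$-family of norms as in \eqref{eq:sharpA2}, with exponents $\lambda_1\leq\cdots\leq\lambda_N$. Combining the defining formula \eqref{eq:salpha} for $S_\alpha$ with the diagonalization gives at once
\[
S_\alpha=\mathrm{span}_{\mathbb C}\{F_j:\lambda_j>\alpha\},
\]
and the strong-openness refinement \eqref{eq:s'alpha1} of Lemma~\ref{le:key} identifies this with $H^0(X,\mathcal O(K_X+L)\otimes\mathcal I(v_\alpha))$. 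In particular, $S_\alpha$ exhausts all of $H^0(X,\mathcal O(K_X+L))$ precisely when $\alpha<\lambda_1$, so the supremum in \eqref{eq:sharp} is exactly $\lambda_1$.

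Next, for $F=\sum c_j F_j$ I would compute the quotient norm by minimizing over representatives in $F+S_\alpha$: the choice $\tilde F=F-\sum_{\lambda_j>\alpha}c_jF_j$ kills the $S_\alpha$-part, and since $\{F_j\}$ is orthogonal for every $\|\cdot\|_t$ this is the minimizer, producing the formula $\|[F]\|_t^2=\sum_{\lambda_j\leq\alpha}|c_j|^2 e^{-\lambda_j t}$ of \eqref{eq:sharpA4}. Hence
\[
e^{\alpha t}\|[F]\|_t^2=\sum_{\lambda_j\leq\alpha}|c_j|^2 e^{(\alpha-\lambda_j)t},\qquad e^{\alpha t}\|F\|_t^2=\sum_{j}|c_j|^2 e^{(\alpha-\lambda_j)t}.
\]
Specializing to $\alpha=\lambda_1$, every index with $\lambda_j\leq\alpha$ satisfies $\lambda_j=\lambda_1=\alpha$, so the first expression is constant in $t$ and equals $\|[F]\|_0^2$; in the second expression, terms with $\lambda_j>\alpha$ decay exponentially while the rest are constant, so $\liminf_{t\to\infty}e^{\alpha t}\|F\|_t^2=\sum_{\lambda_j=\alpha}|c_j|^2=\|[F]\|_0^2$. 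This verifies both identities of \eqref{eq:sharpA1} and hence the sharpness of the first main theorem.

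The argument is essentially mechanical once the diagonalizing basis is in hand, so there is no serious obstacle beyond bookkeeping. The only delicate point is the identification of the intrinsic quantity $\alpha$ in \eqref{eq:sharp} with the spectral first jumping number $\lambda_1$: this relies on upgrading Lemma~\ref{le:key} via the strong openness theorem to the equality \eqref{eq:s'alpha1}, which is what makes the multiplier-ideal condition in \eqref{eq:sharp} coincide with the condition $S_\alpha=H^0(X,\mathcal O(K_X+L))$ read off directly from \eqref{eq:sharpA2}.
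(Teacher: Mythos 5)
Your proposal is correct and follows essentially the same route as the paper: the remark preceding the proposition carries out exactly this computation, using the flat-metric diagonalization \eqref{eq:sharpA2} to get $S_\alpha={\rm span}\{F_j:\lambda_j>\alpha\}$ and the quotient-norm formula \eqref{eq:sharpA4}, and then identifying the supremum in \eqref{eq:sharp} with the first jumping number $\lambda_1$ via the strong-openness equality \eqref{eq:s'alpha1}, so that \eqref{eq:sharpA1} holds for every $F$ when $\alpha=\lambda_1$.
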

		
		The following flatness criterion for the $||\cdot||_t$ norm is a deep result proved by Berndtsson in \cite{Bern15} (see Proposition 3.3 in  \cite{Ber16} for a further development).
		
		\begin{theorem}\label{th:bern15} Let $(L, e^{-\phi})$ be a positive line bundle over a compact complex manifold $X$. Fix $v\in {\rm TC}_\phi$. Assume that elements in $H^0(X, K_X+L)$ have no common zeros. Then the followings are equivalent:
			\begin{itemize}
				\item [(1)] the $||\cdot||_t$ norms in \eqref{eq:tnorm-1} define a flat metric on $H^0(X, K_X+L)\times \mathbb H$;
				\item [(2)] the $||\cdot||_t$ norms in \eqref{eq:tnorm-1} define a flat metric on $\det H^0(X, K_X+L)\times \mathbb H$;
				\item [(3)] there is a holomorphic vector field $V$ on $X$ such that $\left(\frac\partial{\partial t}-V\right) \rfloor \,dd^c(\phi+\hat v_{{\rm Re}\tau})=0$.
			\end{itemize}
		\end{theorem}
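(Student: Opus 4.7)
The plan is to prove the chain $(1) \Rightarrow (2) \Rightarrow (1)$ via Berndtsson's positivity of direct image bundles \cite{Bern09}, and then establish $(1) \Leftrightarrow (3)$ via Berndtsson's explicit curvature formula. The ingredients are all available once one takes $\hat v_t$ as the subgeodesic ray produced by Theorem \ref{th:RW} and regards the family of norms $\|\cdot\|_t$ as the $L^2$-metric on the direct image of $K_{X\times \mathbb H / \mathbb H} + p_1^* L$ equipped with the weight $p_1^* \phi + \hat v_{{\rm Re}\,\tau}$.

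The direction $(1) \Rightarrow (2)$ is immediate: the curvature of the induced metric on $\det H^0(X, K_X+L) \times \mathbb H$ is the trace of the endomorphism-valued curvature of the full bundle, so flatness upstairs forces flatness of the determinant. For $(2) \Rightarrow (1)$, Berndtsson's theorem gives that the curvature $\Theta$ of the $\|\cdot\|_t$ metric is Nakano (in particular Griffiths) semipositive; if $\operatorname{tr}(\Theta) \equiv 0$, then Griffiths semipositivity forces every eigenvalue to vanish, so $\Theta \equiv 0$. Note that this step uses only the abstract positivity, no explicit formula.

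For $(1) \Rightarrow (3)$, I would invoke Berndtsson's pointwise curvature identity. Writing $\Phi(z,\tau) := \phi(z) + \hat v_{{\rm Re}\,\tau}(z)$ on $X \times \mathbb H$, the curvature $\Theta$ decomposes as a sum of two nonnegative terms: a ``geodesic curvature'' contribution proportional to the horizontal part of $(dd^c \Phi)^{n+1}$, and a ``fiber $\dbar$'' contribution measuring the failure of the minimal $L^2$-representative of each class in $H^0(X, K_X+L)$ to extend holomorphically in $\tau$. Flatness $\Theta \equiv 0$ forces both terms to vanish. The first vanishing is the homogeneous Monge--Amp\`ere equation $(dd^c \Phi)^{n+1} = 0$, which guarantees a null direction of $dd^c \Phi$ of the form $\partial/\partial\tau - W$ for some $(1,0)$-vector field $W$ on each fiber $X_\tau \cong X$; the second vanishing says exactly that $W$ is holomorphic on each fiber and varies holomorphically with $\tau$. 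The no-common-zero hypothesis on $H^0(X, K_X+L)$ is then used to rigidify $W$ into a single holomorphic vector field $V$ on $X$ (independent of $\tau$): the Kodaira-type map defined by these sections induces an $\mathbb H$-family of holomorphic embeddings $X \hookrightarrow \mathbb P(H^0(X, K_X+L)^*)$, and the flatness of the metric on the latter identifies these embeddings up to a linear $\mathbb C^*$-action, whose infinitesimal generator pulls back to $V$. This yields the equation in (3).

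For $(3) \Rightarrow (1)$, the argument is essentially explicit. If $V$ exists, integrating the equation $(\partial/\partial t - V)\rfloor\, dd^c \Phi = 0$ shows that the holomorphic flow $F_\tau = \exp(\tau V)$ satisfies $F_\tau^*(\phi + \hat v_{{\rm Re}\,\tau}) = \phi + \ell(\tau)$ for a pluriharmonic $\ell$, so pulling back by $F_\tau$ identifies each norm $\|\cdot\|_t$ with $\|\cdot\|_0$ composed with a constant-spectrum operator $e^{-tA}$ on $H^0(X, K_X+L)$. Diagonalizing $A$ produces a basis $\{F_j\}$ realizing \eqref{eq:sharpA2}, which is flatness. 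The main obstacle throughout is the globalization step in $(1) \Rightarrow (3)$: the curvature-formula vanishing is pointwise and fiberwise, and turning ``a holomorphic horizontal lift on each fiber'' into ``a single holomorphic vector field on $X$'' is precisely where the no-common-zero assumption is indispensable.
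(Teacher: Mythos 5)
Your overall architecture ($(1)\Leftrightarrow(2)$ together with $(1)\Leftrightarrow(3)$) is logically complete, and two of your arrows match the paper: $(1)\Rightarrow(2)$ is dismissed as obvious there too, and your flow argument for $(3)\Rightarrow(1)$ is the same mechanism the paper uses --- it notes that (3) forces $\hat v_t$ to be smooth in $(z,t)$ with $dd^c(\phi+\hat v_{{\rm Re}\tau})$ positive along the $X$-direction, so that in the curvature formula (1.4) of \cite{Bern11} both the geodesic-curvature term and the fiberwise $\dbar$-term vanish, giving flatness.

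The genuine gap is in the remaining arrow. Your $(2)\Rightarrow(1)$ trace argument and, much more seriously, your $(1)\Rightarrow(3)$ argument treat the curvature of the $||\cdot||_t$-metric as a pointwise-defined object and invoke Berndtsson's pointwise curvature decomposition for the weight $\Phi=\phi+\hat v_{{\rm Re}\tau}$. But for a general $v\in{\rm TC}_\phi$ the ray $\hat v_t$ is only psh in $(z,\tau)$: it need not be smooth, and $dd^c\Phi$ restricted to the fibers need not be strictly positive, so neither the decomposition into a geodesic-curvature term plus a fiberwise $\dbar$-term nor the extraction of a null horizontal lift $\partial/\partial\tau-W$ is available a priori; these are exactly what fail for merely psh weights. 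Writing ``the second vanishing says exactly that $W$ is holomorphic on each fiber and varies holomorphically with $\tau$'' restates, rather than proves, the hard statement: deducing regularity of the ray and the existence of a genuine holomorphic vector field from flatness (of the determinant) of a merely positively curved (in the generalized, psh sense) metric, using the no-common-zeros hypothesis, is precisely the content of \cite[Theorem 9.1]{Bern15}, which the paper cites for the implication $(2)\Rightarrow(3)$ and explicitly flags as a deep result. As written, your key implication rests on unjustified smoothness and fiberwise strict positivity; you would need either to reproduce Berndtsson's regularity/rigidity argument or to cite it, as the paper does, after which your remaining steps go through.
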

		
		\begin{proof} (1) implies (2) is obvious. (2) implies (3) is included in the proof of \cite[Theorem 9.1]{Bern15}. To see that (3) implies (1), one may observe that (3) implies that  $\hat v_t(z)$ is smooth in $(z,t)$ and $\Theta:=dd^c(\phi+\hat v_{{\rm Re}\tau})$ is positive along the $X$ direction. Hence we know that the horizontal lift of $\frac\partial{\partial t}$ with respect to $\Theta$ equals our holomorphic vector field $\frac\partial{\partial t}-V$ and the geodesic curvature of the relative K\"ahler form $\Theta$ vanishes. Thus both  $c(\phi)$ and $\eta$ vanish in the curvature formula (1.4) in \cite{Bern11} for the bundle  $H^0(X, K_X+L)\times \mathbb H$. Hence (1) follows.  
		\end{proof}

		\subsection{Jet extension version of the first main theorem}
		
		Let $(L, e^{-\phi})$ be a positive line bundle over a compact complex manifold $X$. Fix $v\in {\rm TC}_\phi$, the strong openness theorem implies that
		$$
		\alpha \mapsto \dim H^0(X, \mathcal O(K_X+L) \otimes \mathcal I(v_\alpha))
		$$
		is an integer valued lower semi-continuous function for $0<\alpha<\lambda_v$. Denote by
		$$
		0<\alpha_1<\cdots <\alpha_N <\lambda_v
		$$
		its jumping numbers (non-continuous points) and put
		$$
		\mathcal F_{\alpha_j}:= H^0(X, \mathcal O(K_X+L) \otimes \mathcal I(v_{\alpha_j})).
		$$
		We have
		$$
		H^0(X, \mathcal O(K_X+L) )  \supset \mathcal F_{\alpha_1} \supset\cdots \supset \mathcal F_{\alpha_N}.
		$$
		For each $1\leq k \leq N$, the image of the following restriction mapping
		$$
		\iota_k: H^0(X, \mathcal O(K_X+L) ) \to  H^0(X, \mathcal O(K_X+L) \otimes \mathcal O_X/\mathcal I(v_{\alpha_k}))
		$$
		is isomorphic to $H^0(X, \mathcal O(K_X+L) ) / \mathcal F_{\alpha_k}$. Fix
		$$
		f\in {\rm Im}\, \iota_k \simeq H^0(X, \mathcal O(K_X+L) ) / \mathcal F_{\alpha_k}
		$$
		and choose $F\in H^0(X, \mathcal O(K_X+L) )$ such that $\iota_k(F)= f$. Then $F$ has the following orthogonal decomposition
		$$
		F= F_1+\cdots+F_k +R,  
		$$
		where $F_1 \in \mathcal F_{\alpha_1}^{\bot}$, $F_2 \in  \mathcal F_{\alpha_1} \cap \mathcal F_{\alpha_2}^{\bot}, \cdots, F_k \in  \mathcal F_{\alpha_{k-1}} \cap \mathcal F_{\alpha_k}^{\bot}$  and $R\in \mathcal F_{\alpha_k}$. Since different choices of $F$ are differ only by a term in $\mathcal F_{\alpha_k}$, we know that $F_1, \cdots, F_k$ are independent of the choice of $F$. We call
		\begin{equation}\label{eq:jet}
			||f||_{\rm jet}^2:= \sum_{j=1}^k \liminf_{t\to \infty} e^{\alpha_j t}\int_{X} i^{n^2} F_j\wedge \bar F_j e^{-\phi-\hat v_t}
		\end{equation}
		the (squared) jet norm of $f\in {\rm Im}\, \iota_k $. We call it the jet norm since it contains the norm of higher jets (derivatives) in case the variety associated to $\mathcal I(v_\alpha)$ is not reduced. The following result is a jet extension version of the first main theorem.
		
		\begin{theorem}\label{th:jet} Fix $1\leq k \leq N$, then the following restriction mapping
			$$
			\iota_k: H^0(X, \mathcal O(K_X+L) ) \to  H^0(X, \mathcal O(K_X+L) \otimes \mathcal O_X/\mathcal I(v_{\alpha_k}))
			$$
			is surjective. Moreover, every section $f\in H^0(X, \mathcal O(K_X+L) \otimes \mathcal O_X/\mathcal I(v_{\alpha_k}))$ has an extension $F_X \in H^0(X, \mathcal O(K_X+L))$ (here "extension" means $\iota_k(F_X)=f$) with
			$$
			\int_X i^{n^2} F_X\wedge \overline{ F_X} e^{-\phi}  \leq ||f||^2_{\rm jet}.
			$$
		\end{theorem}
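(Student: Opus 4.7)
The plan decouples into a surjectivity statement for $\iota_k$ and a sharp norm estimate; the latter is obtained by applying the first main theorem to each piece of an $L^2$-orthogonal decomposition of a chosen lift of $f$.

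For surjectivity, the short exact sheaf sequence
\[
0 \to \mathcal O(K_X+L) \otimes \mathcal I(v_{\alpha_k}) \to \mathcal O(K_X+L) \to \mathcal O(K_X+L) \otimes \mathcal O_X/\mathcal I(v_{\alpha_k}) \to 0
\]
reduces the claim to $H^1(X, \mathcal O(K_X+L) \otimes \mathcal I(v_{\alpha_k})) = 0$. I would get this vanishing from Nadel's theorem applied to $L$ with the singular psh weight $\phi + (1-\epsilon) v_{\alpha_k}$, whose curvature equals $\omega + (1-\epsilon)\, i\partial\bar\partial v_{\alpha_k} \geq \epsilon\, \omega > 0$ (using that $v_{\alpha_k} + \phi$ is psh), combined with the strong openness theorem, which guarantees $\mathcal I((1-\epsilon) v_{\alpha_k}) = \mathcal I(v_{\alpha_k})$ for all sufficiently small $\epsilon > 0$.

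For the norm estimate, pick any lift $F \in H^0(X, \mathcal O(K_X+L))$ of $f$ and orthogonally decompose it with respect to the inner product associated to $||\cdot||_0$, exactly as in the discussion preceding the theorem: $F = F_1 + \cdots + F_k + R$ with $F_j \in W_j := \mathcal F_{\alpha_{j-1}} \cap \mathcal F_{\alpha_j}^{\bot}$ and $R \in \mathcal F_{\alpha_k}$ (with the convention $\mathcal F_{\alpha_0} := H^0(X, \mathcal O(K_X+L))$). The key observation is that $F_j$, being in $\mathcal F_{\alpha_j}^{\bot}$, is already the minimum $||\cdot||_0$-norm representative of the coset $F_j + \mathcal F_{\alpha_j}$: indeed, for any $H \in \mathcal F_{\alpha_j}$ one has $||F_j + H||_0^2 = ||F_j||_0^2 + ||H||_0^2$. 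Applying the first main theorem to $F_j$ with $\alpha = \alpha_j$ therefore produces a representative of this coset whose $||\cdot||_0$-norm is bounded by
\[
\liminf_{t \to \infty} e^{\alpha_j t} \int_X i^{n^2} F_j \wedge \overline{F_j}\, e^{-\phi - \hat v_t},
\]
and by the minimality just noted, the same bound holds for $||F_j||_0^2$ itself.

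Finally, set $F_X := F_1 + \cdots + F_k$. Then $F - F_X = R \in \mathcal F_{\alpha_k}$, so $\iota_k(F_X) = \iota_k(F) = f$, and pairwise $||\cdot||_0$-orthogonality of the summands yields
\[
\int_X i^{n^2} F_X \wedge \overline{F_X}\, e^{-\phi} = \sum_{j=1}^k ||F_j||_0^2 \leq \sum_{j=1}^k \liminf_{t \to \infty} e^{\alpha_j t} \int_X i^{n^2} F_j \wedge \overline{F_j}\, e^{-\phi - \hat v_t} = ||f||_{\rm jet}^2,
\]
which is the desired estimate. The main obstacle is the surjectivity step, since the curvature of the unperturbed weight $\phi + v_{\alpha_k}$ is only non-negative; borrowing a bit of positivity via $\phi + (1-\epsilon) v_{\alpha_k}$ and invoking strong openness is what makes Nadel vanishing applicable.
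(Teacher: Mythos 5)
Your norm estimate is fine: granted that a lift of $f$ exists, the $\|\cdot\|_0$-orthogonal decomposition $F=F_1+\cdots+F_k+R$, the minimality of $F_j$ in its coset modulo $\mathcal F_{\alpha_j}$ (since $F_j\in\mathcal F_{\alpha_j}^{\bot}$), and an application of the first main theorem with $\alpha=\alpha_j$ to each $F_j$ give exactly $\|F_X\|_0^2=\sum_j\|F_j\|_0^2\leq \|f\|^2_{\rm jet}$ for $F_X:=F_1+\cdots+F_k$; this is essentially the paper's own mechanism (monotonicity of the quotient norms from Proposition \ref{pr:BL-alpha}, repackaged through the first main theorem), just phrased slightly differently.

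The surjectivity step, however, has a genuine gap: you invoke strong openness to claim $\mathcal I((1-\epsilon)v_{\alpha_k})=\mathcal I(v_{\alpha_k})$ for small $\epsilon>0$, but strong openness goes the other way. It asserts $\mathcal I(\psi)=\bigcup_{\delta>0}\mathcal I((1+\delta)\psi)$, i.e.\ one may make the weight slightly \emph{more} singular without changing the ideal; scaling \emph{down} can strictly enlarge it, and this failure occurs precisely at jumping numbers, which is the case at hand. For instance, for the Hele--Shaw type curve \eqref{eq:valpha-lambda-1} on a Riemann surface one has $v_{\alpha_1}\sim\log|z-x|^2$ near $x$, so $\mathcal I(v_{\alpha_1})=\mathfrak m_x$ while $\mathcal I((1-\epsilon)v_{\alpha_1})=\mathcal O_X$. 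Consequently your Nadel vanishing (which is correct for the weight $\phi+(1-\epsilon)v_{\alpha_k}$) kills $H^1$ of the \emph{larger} ideal $\mathcal I((1-\epsilon)v_{\alpha_k})$ and only yields surjectivity onto $H^0(X,\mathcal O(K_X+L)\otimes\mathcal O_X/\mathcal I((1-\epsilon)v_{\alpha_k}))$, a strictly weaker statement than surjectivity of $\iota_k$. The paper avoids this by moving \emph{up} the test curve rather than scaling down: using strong openness (in its correct direction) it chooses $\alpha\in(\alpha_k,\lambda_v)$ with $\mathcal I(v_\alpha)=\mathcal I(v_{\alpha_k})$ and sets $G:=\tfrac{\alpha_k}{\alpha}v_\alpha$; concavity of the test curve gives $v_\alpha\leq G\leq v_{\alpha_k}$, hence $\mathcal I(G)=\mathcal I(v_{\alpha_k})$, while $\tfrac{\alpha}{\alpha_k}G=v_\alpha\in{\rm PSH}(X,\phi)$ with $\tfrac{\alpha}{\alpha_k}>1$ supplies exactly the positivity margin (equivalently $i\partial\dbar(\phi+G)\geq(1-\tfrac{\alpha_k}{\alpha})\,i\partial\dbar\phi$) needed to apply the surjectivity theorem of \cite{CDM17} (or a Nadel-type argument) with the \emph{correct} multiplier ideal. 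You would need this, or some substitute for it, to close the gap.
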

		
		\begin{proof} By the strong openness theorem, we can choose $\alpha_k<\alpha<\lambda_v$ such that 
			\begin{equation}\label{eq:SOT}
				\mathcal I(v_{\alpha_k})=\mathcal I(v_{\alpha}).
			\end{equation}
			Put 
			$$
			G:=\frac{\alpha_k}{\alpha}  v_{\alpha}.
			$$
			The concavity of $\alpha\mapsto v_\alpha$ implies that
			$$
			G\leq v_{\alpha_k}.
			$$
			Since obviously $v_{\alpha} \leq G$, we know that \eqref{eq:SOT} gives $\mathcal I(G) =\mathcal I(v_{\alpha_k})$. Note that $v_\alpha \in {\rm PSH}(X, \phi)$ implies that
			$$
			\frac{\alpha}{\alpha_k} G \in  {\rm PSH}(X, \phi)
			$$ 
			and $\frac{\alpha}{\alpha_k} >1$, hence the main theorem in \cite{CDM17} (see also \cite{ZZ19}) gives the surjectivity of 
			$$
			H^0(X, \mathcal O(K_X+L) ) \to  H^0(X, \mathcal O(K_X+L) \otimes \mathcal O_X/\mathcal I(G)).
			$$
			From $\mathcal I(G) =\mathcal I(v_{\alpha_k})$ we obtain  the surjectivity of $\iota_k$. A similar argument also gives
			$$
			\mathcal F_{\alpha_j} / \mathcal F_{\alpha_{j+1}} \simeq  H^0(X, \mathcal O(K_X+L) \otimes \mathcal I(v_{\alpha_{j}})/\mathcal I(v_{\alpha_{j+1}})).
			$$
			Since  $H^0(X, \mathcal O(K_X+L) )/\mathcal F_{\alpha_{j+1}}$
			with the family of $||\cdot||_t$-norms is positive curved (as the quotient of a positive curved bundle), its dual
			$ V_{\alpha+1}$
			is negative curved. The proof of Proposition \ref{pr:BL-alpha} then implies that  the quotient norm $||[F_j]||_t$ of $$[F_j]\in 	\mathcal F_{\alpha_j} / \mathcal F_{\alpha_{j+1}}  \subset H^0(X, \mathcal O(K_X+L) )/\mathcal F_{\alpha_{j+1}}$$ for $F_j$ in \eqref{eq:jet} satisfies that $e^{\alpha_{j+1} t}||[F_j]||_t^2$ is increasing in $t$. Hence the estimate part follows.
		\end{proof}
		
		\subsection{Non-compact version of the first main theorem}

		The main step in the proof of the first main theorem is to apply Theorem 1.1 in \cite{Bern20}, which is only proved when $H^0(X, K_X+L)$ is finite dimensional. In particular, in case $X$ is non-compact one has to find a method to bypass \cite[Theorem 1.1]{Bern20}. Our idea is to apply the main result in \cite{NW24}.

		\begin{theorem}\label{th:noncompact} Let $(L, e^{-\phi})$ be a positive line bundle over an $n$-dimentional qck (see \cite[section 2.1]{NW24}) manifold $X$. Fix $v\in {\rm TC}_\phi$,  $0<\alpha<\lambda_v$. Assume that $v_{\lambda_v} \not\equiv -\infty$ and
			\begin{equation}\label{eq:quasilinear}
				\sup_{0\leq \beta\leq \lambda_v, \,v_{\lambda_v}(x)>-\infty} |\lambda_v v_\beta(x) -\beta v_{\lambda_v}(x)|<\infty.
			\end{equation}
			Then for every $L$-valued holomorphic $n$-form $F$ on $X$, there exists another  $L$-valued holomorphic $n$-form $ F_\alpha$ on $X$ such that
			$$
			F_\alpha- F \in H^0(X, \mathcal O(K_X+L)\otimes \mathcal I(v_\alpha))
			$$
			and
			$$
			\int_X i^{n^2} F_\alpha\wedge \overline{F_\alpha} \, e^{-\phi} \leq \liminf_{t\to \infty} e^{\alpha t} \int_X i^{n^2} F\wedge \bar F \, e^{-\phi-\hat v_t}.
			$$
		\end{theorem}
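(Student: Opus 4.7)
The plan is to bypass the appeal to \cite[Theorem 1.1]{Bern20} used in Proposition \ref{pr:BL-alpha}, which requires $H^0$ to be finite-dimensional, by invoking instead the Ohsawa-Takegoshi extension of \cite{NW24} on qck manifolds---an approach based on the product Berndtsson convexity theorem \cite{Bern06, Bern09} applied to a single subgeodesic ray. The quasi-linearity condition \eqref{eq:quasilinear} is precisely what lets one effect this transfer: it says that $v_\beta$ is uniformly close to the linear interpolation $\beta G$, where $G := v_{\lambda_v}/\lambda_v \in {\rm PSH}(X, \phi)$ is well-defined since $v_{\lambda_v} \not\equiv -\infty$. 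A partial Legendre transform then yields
$$\hat v_t = \lambda_v \max\{G+t, 0\} + O(1),$$
uniformly in $t > 0$ and $x \in X$. Denote the associated Berndtsson-Lempert linear ray by $\tilde u_t := \lambda_v \max\{G+t, 0\}$.

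Next, the bounded comparison $|v_\alpha - \alpha G| = O(1)$ gives $\mathcal I(v_\alpha) = \mathcal I(\alpha G)$ for $0 < \alpha < \lambda_v$; together with Lemma \ref{le:key} and the strong openness theorem (both pointwise arguments, valid in the non-compact setting), one obtains
$$S_\alpha = H^0(X, \mathcal O(K_X+L) \otimes \mathcal I(v_\alpha)) = H^0(X, \mathcal O(K_X+L) \otimes \mathcal I(\alpha G)).$$
The main theorem of \cite{NW24}, applied on the qck manifold $X$ to the linear ray $\tilde u_t$ and the jumping value $\alpha$, then produces an extension $F_\alpha \in H^0(X, \mathcal O(K_X+L))$ with $F_\alpha - F$ lying in this filtered subspace.

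To convert the \cite{NW24} estimate from $\tilde u_t$ to $\hat v_t$ without losing constants, I would apply the monotonicity of the quotient norms $e^{\alpha t}||[F]||_t^2$ directly to $\hat v_t$ rather than route through $\tilde u_t$. The argument parallels Proposition \ref{pr:BL-alpha}: dualize to $V = H^0(X, \mathcal O(K_X+L))^*$, use log-convexity of $t \mapsto ||u||_{*t}$ coming from the negative curvature of the dual norm, and identify the jumping-number filtration on $V$ using the quasi-linear structure. Once this monotonicity is in hand, letting $t \to 0$ on the left and taking $\liminf$ as $t \to \infty$ on the right yields the sharp estimate asserted in the theorem.

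The main obstacle is upgrading the \cite{NW24} monotonicity from the linear ray $\tilde u_t$ to the general quasi-linear $\hat v_t$ while preserving the sharp constant: a naive bounded comparison would incur a multiplicative factor $e^{O(1)}$. The quasi-linearity hypothesis \eqref{eq:quasilinear} does double duty here---it identifies the multiplier ideals $\mathcal I(v_\alpha)$ with $\mathcal I(\alpha G)$, and it ensures the dual jumping-number structure is unchanged when passing between $\tilde u_t$ and $\hat v_t$, so that the monotonicity for $\hat v_t$ follows from the Berndtsson curvature formula on qck $X$ without the finite-dimensional hypothesis of \cite[Theorem 1.1]{Bern20}.
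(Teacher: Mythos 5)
Your overall strategy---set $G:=v_{\lambda_v}/\lambda_v$, observe that \eqref{eq:quasilinear} places $\hat v_t$ within a bounded distance of the Berndtsson--Lempert ray $\lambda_v\max\{G+t,0\}$, and then lean on the qck extension machinery of \cite{NW24}---is the same reduction the paper makes. But the decisive step is not carried out. The paper's point in isolating Theorem \ref{th:noncompact} is precisely that the quotient-norm monotonicity of Proposition \ref{pr:BL-alpha} is \emph{not} available here: that argument rests on \cite[Theorem 1.1]{Bern20}, whose proof (finitely many jumping numbers, the flat reduction lemma, the identification $\mathcal F_\alpha=S_\alpha$) is intrinsically finite-dimensional, while on a non-compact qck manifold $H^0(X,\mathcal O(K_X+L))$ is infinite-dimensional. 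Your proposal ultimately routes the sharp estimate through exactly this argument, asserting that ``the dual jumping-number structure is unchanged'' and that ``the monotonicity for $\hat v_t$ follows from the Berndtsson curvature formula \dots without the finite-dimensional hypothesis.'' That is the whole difficulty restated, not a proof: no substitute for \cite[Theorem 1.1]{Bern20} is offered, and without it the dualization/log-convexity scheme does not identify the integrability filtration $S_\alpha$ with a dual filtration, nor yield the increasing property of $e^{\alpha t}\|[F]\|_t^2$. The paper instead follows the proof of Theorem C in \cite{NW24}, which is built for the non-compact setting with a single ray, and checks that this proof tolerates test curves that are uniformly close to linear ones in the sense of \eqref{eq:quasilinear}; the quasi-linearity is used to bring $\hat v_t$ within the scope of that argument, not to resurrect the jumping-number filtration. (You correctly note that a naive bounded comparison costs a factor $e^{O(1)}$, so something of this kind is genuinely needed.)

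Two smaller inaccuracies: the hypothesis \eqref{eq:quasilinear} controls $|v_\beta-\beta G|$ only on $\{v_{\lambda_v}>-\infty\}$, so neither the claimed equality $\mathcal I(v_\alpha)=\mathcal I(\alpha G)$ nor the uniform bound $\hat v_t=\lambda_v\max\{G+t,0\}+O(1)$ on all of $X$ follows; on the polar set of $v_{\lambda_v}$ one has $\alpha G=-\infty$ while $v_\alpha$ may be finite, and $\hat v_t$ may grow linearly in $t$ while $\lambda_v\max\{G+t,0\}=0$ there. Only the inclusion $\mathcal I(\alpha G)\subset\mathcal I(v_\alpha)$ and an almost-everywhere comparison survive; the inclusion does suffice for the membership claim $F_\alpha-F\in H^0(X,\mathcal O(K_X+L)\otimes\mathcal I(v_\alpha))$ (and Lemma \ref{le:key} plus strong openness are indeed pointwise and valid here), but the equality should not be asserted.
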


		\begin{proof} In case $\lambda_v v_\beta = \beta v_{\lambda_v}$ for every $0\leq \beta \leq \lambda_v$, we have
			$$
			\hat v_t=\max\{v_{\lambda_v}+\lambda_v t, 0\},
			$$
			which is precisely the test curve used in \cite[Theorem 2.4]{NW24}. We call such test curve linear test curves. The main observation is that \cite[Theorem 2.4]{NW24} also applies to test curves that are uniformly close (in the sense of \eqref{eq:quasilinear}) to linear test curves. Thus the above theorem follows from the proof of Theorem C in \cite{NW24}. 
		\end{proof}
		
		\begin{definition}\label{de:quasilinear} We call $v\in {\rm TC}_\phi$ a quasi-linear test curve if \eqref{eq:quasilinear} holds.			
		\end{definition}
		
		We believe that the first main theorem is still true for non-quasi-linear test curves on pseudoconvex domains, but we do not know the proof.

		\section{Toric degeneration and Chebyshev transform}\label{se:toric}
		
		\subsection{Classical Okounkov body, toric degeneration and proof of Theorem C}

		\subsubsection{Classical Okounkov body}
		
		Let $L$ be a positive line bundle over an $n$-dimensional compact complex manifold $X$. The classical Okounkov body \cite{Ok96} of $L$ associated to a complete flag $Y_{\bullet}$ of submanifolds (each $Y_j$ is a codimension $j$ closed complex submanifold of $X$)
		$$
		X=Y_0\supset Y_1\supset \cdots Y_{n-1}\supset Y_n,
		$$
		is defined by the closure of the set of rescaled valuation vectors
		\begin{equation}\label{eq:Okoun1}
			\Delta:=\overline{\{\nu(F)/k: F\in H^0(X, kL)\setminus \{0\}, \,k\in \mathbb Z_{>0}\}},
		\end{equation}
		where the valuation vector $\nu(F):=(\nu_1, \cdots, \nu_n)\in\mathbb R^n$ is defined in the following way: $\nu_1$ is the vanishing order ${\rm ord}_{Y_1} F$ of $F$ along $Y_1$, $\nu_2$ is the vanishing order ${\rm ord}_{Y_2} F_1$ of 
		$$
		F_1:=\frac{F}{s_{Y_1}^{\nu_1}}|_{Y_1}
		$$
		along $Y_2$, where $s_{Y_1}$ is the defining section of $Y_1$ in $Y_0=X$. Then one may inductively define
		$$
		F_j:=\frac{F_{j-1}}{s_{Y_j}^{\nu_j}}|_{Y_j}, \ \  \nu_{j+1}={\rm ord}_{Y_{j+1}} F_j, \ \ 2\leq j \leq n-1,
		$$
		where $s_{Y_j}$ is the defining section of $Y_j$ in $Y_{j-1}$. From this definition, we have
		$$
		\nu(FG)=\nu(F)+\nu(G), \ \ F\in H^0(X, k_1L), \ \ G\in H^0(X, k_2L),
		$$
		which implies the convexity of $\Delta$. One may further use the Riemann-Roch theorem (or Bergman kernel asymptotics) to show that $\Delta$ is a convex body (compact convex set with non-empty interior) with (see \cite{Ok96} or \cite[Theorem A]{LM09} for the big case) 
		\begin{equation}\label{eq:Okoun2}
			|\Delta|=\int_X \frac{c_1(L)^n}{n!},
		\end{equation}
		where $|\Delta|$ denotes the volume of $\Delta$ with respect to the Lebesgue measure on $\mathbb R^n$.

		\subsubsection{Toric degeneration} 
		
Let	us see how to use a complete flag of submanifolds to define a toric degeneration of $X$. First, since $Y_1$ a submanifold of $X$, the deformation of $X$ to the normal bundle $NY_1$ is well defined. Denote by 
$$
\pi_1: NY_1\to Y_1
$$
the bundle projection. Then the embedding  $\pi_1^{-1}(Y_2) \subset NY_1$ induce a deformation of $NY_1$ to $NY_2$ (here one needs to check that the normal bundle for the embedding  $\pi_1^{-1}(Y_2) \subset NY_1$ is isomorphic to $NY_2$). Then one may inductively define
$$
X\to NY_1 \to NY_2 \to \cdots \to NY_n.
$$
Notice that $Y_n$ is a single point, say $x$, in $X$, thus $NY_n\simeq \mathbb C^n$ and we obtain a deformation of $X$ to $\mathbb C^n$. Now let us see how to deform $\phi$ to a toric psh function on $\mathbb C^n$. First, let us choose a holomorphic coordinate system $z$ around $x$ such that $Y_j$ is defined by $z_1=\cdots=z_j=0$ around $x$ (in particular $z(x)=0$). Fix a holomorphic frame $\bm e$ of $L$ around $x$ such that $|e|^2=e^{-\phi}$ around $x$, one may write each $F\in H^0(X, kL)$ as 
$$
F=f(z)\bm e^{\otimes k}
$$ 
near $x$, where $f(z)$ is a holomorphic function near the origin of $\mathbb C^n$. Then one may inductively  define
\begin{equation}\label{eq:new-hom1}
f_{\rm hom,1}(\xi_1, z_2, \cdots, z_n):=\lim_{s\to 0} \frac{f(s\xi_1, z_2, \cdots, z_n)}{s^{{\rm ord}_{z_1=0} f}}	
\end{equation}
and 
\begin{equation}\label{eq:new-homj}
	f_{\rm hom,j}(\xi_1,\cdots, \xi_j, z_{j+1}, \cdots, z_n):=\lim_{s\to 0} \frac{f_{\rm hom,j-1}(\xi_1,\cdots, \xi_{j-1}, s\xi_j, z_{j+1}, \cdots, z_n)}{s^{{\rm ord}_{z_j=0} f_{\rm hom,j-1}}}	
\end{equation}
for $2\leq j\leq n$. In this way, we obtain
$$
\phi\to \phi^1 \to \cdots \to \phi^n,
$$
where 
\begin{equation}\label{eq:new-phij}
	\phi^j:=\sup^*\left\lbrace \frac1k \log|f_{\rm hom,j}|^2: F\in H^0(X, kL), \sup_{X}\,  |F|^2e^{-k\phi}=1 \right\rbrace,
\end{equation}
for $1\leq j\leq n$. From \cite{WN21} (see also \cite{Tes24}), we know that $\phi^1=\phi_{Y_1}$ is the $NY_1$-limit of (see Definition \ref{de:wk}) $\phi$ and
$\phi^n$ is toric psh on $\mathbb C^n$ in the sense that
\begin{equation}\label{eq:RW-C2-new}
\phi^n(\xi)=u(\log|\xi_1^2|,\cdots, \log|\xi_n^2|)
\end{equation}
for some convex increasing function $u$ on $\mathbb R^n$ with
\begin{equation}\label{eq:new-phij-1}
	\overline{\nabla u(\mathbb R^n)}=\Delta,
\end{equation}
moreover, we have $u=v^*$, i.e. $u$ is the Legendre transform 
\begin{equation}\label{eq:RW-C3-new}
	u(y)=\sup_{\alpha\in \mathbb R^n} \{y\cdot \alpha-v(\alpha)\}
\end{equation}
of the lower semi-continuous function convex function $v$ on $\mathbb R^n$ defined by
\begin{equation}\label{eq:RW-C4-new}
	v(\alpha):=\liminf_{\frac{\nu(F)}k \to \alpha, \, k\to\infty}  \left\lbrace \frac1k \log \left(\sup_X|F|^2e^{-k\phi}\right): F\in H^0(X, kL) \, \text{with} \, f_{\rm hom,j} =\xi^{\nu(F)} \right\rbrace
\end{equation}
for $\alpha\in \Delta$ and $v(\alpha)=\infty$ for $\alpha\notin \Delta$. The restriction of $v$ to the interior of $\Delta$ is called the Chebyshev transform $c[\phi]$ of $\phi$ in \cite{WN14}.

				\begin{definition}\label{de:toric-deg-new} We call 
					$$
					(X,\phi) \to (NY_1, \phi^1) \to \cdots \to (NY_n, \phi^n)
					$$
					a toric degeneration of $(X,\phi)$.
\end{definition}

		\subsubsection{Proof of Theorem C} Assume that $(1\,\cdots, 1)$ lines in the interior of $ \Delta$, then 
		 \eqref{eq:new-phij-1} gives
	$$
\int_{\mathbb C^n} i^{n^2} d\xi \wedge \overline{d\xi} \, e^{-\phi^n} = (2\pi)^n \int_{\mathbb R^n} e^{-u(y)+y_1+\cdots+y_n} \, dy_1\cdots dy_n <\infty.	 
$$
The idea is to apply our second main theorem to the degeneration  $$(NY_{n-1}, \phi^{n-1}) 
\to (NY_n, \phi^n).$$ 
Denote by $\pi_{n-1}: NY_{n-1} \to Y_{n-1}$ the natural bundle projection, notice that the $L^2$-minimal extension of $\pi_{n-1}^*(dz_1\wedge \cdots\wedge dz_n\otimes e)$ from $\pi_{n-1}^{-1}(x) $ to $NY_{n-1}$ can be written as $\pi_{n-1}^*(F_{n-1})$ for some $F_{n-1} \in H^0(Y_{n-1}, (K_X+L)|_{Y_{n-1}})$ over $NY_{n-1}$. Similar to the main theorem, we shall identify $F_{n-1}$ with $\pi_{n-1}^*(F_{n-1})$, then we have 
$$
F_{n-1}(x)=dz_1\wedge \cdots\wedge dz_n\otimes e
$$
and
$$
\int_{NY_{n-1}} i^{n^2}  F_{n-1} \wedge \overline{F_{n-1}} \, e^{-\phi^{n-1}} \leq \int_{\mathbb C^n} i^{n^2} d\xi \wedge \overline{d\xi} \, e^{-\phi^n} <\infty.
$$
Apply the second main theorem $n$-times, we eventually obtain $F\in H^0(X, K_X+L)$ with
$$
F(x)=dz_1\wedge \cdots\wedge dz_n\otimes e \neq 0
$$
and
$$
\int_{X} i^{n^2}  F \wedge \overline{F} \, e^{-\phi} \leq \int_{\mathbb C^n} i^{n^2} d\xi \wedge \overline{d\xi} \, e^{-\phi^n} <\infty.
$$

		\subsection{Weighted canonical growth condition and toric degeneration}

		\subsubsection{Weighted infinitesimal Okounkov body} Let $L$ be a positive line bundle over an $n$-dimensional compact complex manifold $X$. We shall define the weighted infinitesimal Okounkov body of $L$ at $x\in X$. Fix a holomorphic coordinate system $z$ around $x$ such that  $z(x)=0$. Fix a holomorphic frame $\bm e$ of $L$ around $x$, one may write each $F\in H^0(X, kL)$ as 
		$$
		F=f(z)\bm e^{\otimes k}
		$$ 
		near $x$, where $f(z)$ is a holomorphic function near the origin of $\mathbb C^n$. Consider the Taylor series
		\begin{equation}\label{eq:taylor512}
			f(z)=\sum_{\alpha \in \mathbb N^n} c_\alpha z^\alpha, \ \ \  \ \ \ z^\alpha:=z_1^{\alpha_1}\cdots z_n^{\alpha_n},
		\end{equation} 
		of $f$ around $0$, we shall look at the linear mapping
		\begin{equation}\label{eq:T-beta}
			T(\alpha)=(T_1(\alpha), \cdots, T_n(\alpha)):=\left(\sum_{k=1}^n\beta_{1k}\alpha_k, \cdots, \sum_{k=1}^n\beta_{nk}\alpha_k\right), 
		\end{equation}
		with respect to a matrix $\beta:=(\beta_{jk})$.
		
		\begin{definition}\label{de:weightmatrix} We call $\beta$ a weight (matrix) if each $\beta_{jk}$ is a non-negative integer and $\det\beta\neq 0$. A weight $\beta$ is said to be infinitesimal if $\beta_{1k}>0$ for all $1\leq k\leq n$. 
		\end{definition}
		
		Each weight defines an order on $\mathbb N^n$ as follows.

		\begin{definition}\label{de:betaorder} We say that $\alpha \prec_\beta \gamma$ if there exists $1\leq j\leq n$ such that $T_l(\alpha)=T_l(\gamma)$ for $1\leq l\leq j-1$ and $T_{j}(\alpha)<T_j({\gamma})$. We shall denote by ${\rm min}_\beta$ the minimum with respect to the order $\prec_\beta$ and
			call
			\begin{equation}\label{eq:betaorder}
				\nu_\beta(F):={\rm min}_{\beta}\{\alpha\in \mathbb N^n: c_\alpha\neq 0 \ \ \text{in}\ \ \eqref{eq:taylor512}\}, \ \ F\in H^0(X, kL)\setminus\{0\},
			\end{equation}
			the $\beta$-valuation vector of $F$. The $\beta$-weighted Okounkov body of $L$ at $x$ is defined by
			\begin{equation}\label{eq:beta-Okoun}
				\Delta_{\beta}:=\overline{\{\nu_\beta(F)/k: F\in H^0(X, kL)\setminus \{0\}, \,k\in \mathbb Z_{>0}\}}.
			\end{equation}
			$\Delta_\beta$ is said to be infinitesimal if $\beta$ is infinitesimal.
		\end{definition}
		
		\noindent
		\textbf{Remark.} \emph{For every weight $\beta$, $\prec_\beta$ is an additive order in the sense of Definition 4.1 in \cite{WN14}. $\Delta_\beta$ reduces to the classical Okounkov body in case $\beta$ is the identity matrix and $$Y_j=\{z_1=\cdots=z_j=0\}$$ around $x=Y_n$. If $\beta$ is defined such that
		\begin{equation}\label{eq:T-IOB}			T(\alpha)=(\alpha_1+\cdots+\alpha_n, \alpha_1, \cdots, \alpha_{n-1})
			\end{equation}
			then $\Delta_{\beta}$ is called the "straightened up" infinitesimal Newton-Okounkov body \cite[(1.1.1)]{FL25} of $L$ at $x$ with respect to $Y_\bullet$. We look at the weighted generalizations since each infinitesimal weight $\beta$ corresponds to a family of deformations to weighted normal bundle and the (non-trivial) weight is essential in several recent applications (see \cite{ASQTW25, And13, Kav19, WN19} and references therein).}
		
		\medskip
		
		Similar to the classical case (see \cite[Theorem 4.9]{KK12}), we know that each $\Delta_\beta$ is a convex body in $\mathbb R^n$ with
		\begin{equation}\label{eq:Okoun-beta}
			|\Delta_\beta|=\int_X \frac{c_1(L)^n}{n!}.
		\end{equation}

		\subsubsection{Weighted canonical growth condition} Let $(L, e^{-\phi})$ be a positive line bundle over an $n$-dimensional compact complex manifold $X$. Fix $x\in X$, a coordinate system $z$ around $x$ with $z(x)=0$ and infinitesimal weight $\beta$.
		Similar to \eqref{eq:valphaY}, we define
		\begin{equation}\label{eq:valpha1}
			v^1_\alpha=\sup\{\sigma\in {\rm PSH}(X,\phi):  \sigma\leq 0 \, \text{and $\nu_{1}(\sigma)\geq \alpha$}\},
		\end{equation}
		where (one may verify that the limit below exists)
		\begin{equation}\label{eq:nu1sigma}
			\nu_{1}(\sigma):=\lim_{t\to -\infty} \frac{\sup\{\sigma(z): |z_1|^{2/\beta_{11}} + \cdots+   |z_n|^{2/\beta_{1n}} =e^t\}}{t},  
		\end{equation}
		denotes the directional Lelong number of $\sigma$ at $x$.
		The critical value
		\begin{equation}\label{eq:valpha11}
			\lambda_{v^1}:=\inf\{\alpha\in\mathbb R: v^1_\alpha\equiv-\infty\} =\sup\{\nu_{1}(\sigma): \sigma\in{\rm PSH}(X,\phi)\} <\infty
		\end{equation}
		is a weighted version of the pseudoeffective threshold. One may observe that $\nu_1(v^1_\alpha) \geq \alpha$, thus
		\begin{equation}\label{eq:valpha12}
			P[v^1_\alpha]=v^1_\alpha,
		\end{equation}
		and $v^1\in {\rm C}_\phi$. The geodesic ray associated to $v_\alpha^1$ is denoted by $\hat v^1_t$. Similar to  \eqref{eq:phi-cstar-1}, we define
		\begin{equation}\label{eq:phi-cstar-11}
			\phi^1_{\mathbb C^*}(z,s):=\phi(z)+ \hat v^1_{-\log|s^2|}(z), 
		\end{equation}
		for $z\in X$ and $0<|s|<1$;  and
		\begin{equation}\label{eq:phi-cstar-12}
			\phi^1_{\mathbb C^*}(z,s):=\phi(z)
		\end{equation}
		for  $z\in X$ and $|s|\geq 1$. Let us define $X^1_{\mathbb C}$ by gluing $X\times\mathbb C^*$ with
		$$
		\Omega_{\mathbb C}:=\{(\xi,s)\in \mathbb C^n\times\mathbb C: (s^{\beta_{11}}\xi_1, \cdots, s^{\beta_{1n}}\xi_n)\in \Omega\},
		$$
		where $\Omega$ is a small neighborhood of the origin. Then 
		similar to \eqref{eq:s-to-zero}, $\phi^1_{\mathbb C^*}$ extends to an $S^1$-invariant psh potential, say $\phi^1_{\mathbb C}$, for the line bundle $L^1_{\mathbb C}$ over the total space $X^1_{\mathbb C}$.
		
		\begin{definition}[Following \cite{WN18, WN21}]\label{de:wccG} We call $(L^1_{\mathbb C}, e^{-\phi^1_{\mathbb C}})$ defined above the weighted canonical weak K\"ahler deformation of $(L,e^{-\phi})$ to the weighted normal bundle at $x$ and $\phi^1_{x}:=\phi^1_{\mathbb C}|_{X^1_0}$ (note that the central fiber $X_0^1=\mathbb C^n$)
			the weighted canonical growth condition.
		\end{definition}

	\noindent
	\textbf{Remark.} \emph{In case $\beta_{11}=\cdots=\beta_{1n}=1$, $\phi^1_x$ reduces to the canonical growth condition in \cite{WN18}. Apply our second main theorem to $(L_{\mathbb C}, e^{-\phi^1_{\mathbb C}})$, we obtain the following weighted version of (the $Y=\{x\}$ case of) Theorem \ref{th:RWThC'}.}

		\begin{theorem}\label{th:RWThC1} Let $(L, e^{-\phi})$ be a positive line bundle over an $n$-dimensional compact complex manifold $X$. Fix $x\in X$; with respect to the notations above, if
			\begin{equation}\label{eq:RWThC1-int-condition}
				I_1:=	\int_{\mathbb C^n} i^{n^2} d\xi \wedge \overline{d\xi} \, e^{-\phi_{x}^1(\xi)} <\infty , \ \ \ d\xi:=d\xi_1\wedge\cdots \wedge d\xi_n,
			\end{equation}
			then there is a holomorphic section $F_X$ of $K_{X}+L$ such that 
			\begin{equation}\label{eq:RWThC1-estmate}
				F_X(x)=dz, \ \ \int_{X} i^{n^2} F_X \wedge \overline{F_X} \, e^{-\phi} \leq  I_1. 
			\end{equation}
		\end{theorem}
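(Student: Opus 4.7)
The strategy is to apply the Second Main Theorem directly to the weighted canonical weak Kähler deformation $(L^1_{\mathbb C}, e^{-\phi^1_{\mathbb C}})$ over $X^1_{\mathbb C}$, with $Y=\{x\}$. The candidate boundary datum is $F_Y:=dz_1\wedge\cdots\wedge dz_n\otimes\bm e$, viewed as a generator of the one-dimensional space $(K_X+L)|_x$. The hoped-for conclusion $F_X(x)=dz$ together with the integral bound is then exactly the content of \eqref{eq:C'-estmate}, with the right-hand side matching $I_1$.

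Before invoking the Second Main Theorem I would verify its hypotheses in this setting. The $\mathbb C^*$-action on $X^1_{\mathbb C}$ is the weighted one modeled on \eqref{eq:B-action-beta}, namely $a\cdot(\xi,s)=(a^{\beta_{11}}\xi_1,\ldots,a^{\beta_{1n}}\xi_n,a^{-1}s)$; this makes $\{x\}\times\mathbb C$ a $\mathbb C^*$-invariant submanifold and $\mu^1$ a $\mathbb C^*$-equivariant contraction of $X$ to $\{x\}$. Admissibility \eqref{eq:cstar-deg-Y-2} follows from a weighted version of \eqref{eq:adm-1}: under $z_j=s^{\beta_{1j}}\xi_j$, the relative $n$-form $s^{-(\beta_{11}+\cdots+\beta_{1n})}dz_1\wedge\cdots\wedge dz_n$ extends across $\{s=0\}$ as $d\xi_1\wedge\cdots\wedge d\xi_n$, yielding both isomorphisms of \eqref{eq:cstar-deg-Y-2} since $K_{\mathbb C^n}$ and $K_X|_{\{x\}}$ are trivial. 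The weight $\phi^1_{\mathbb C}$ is $S^1$-invariant by its construction from the $S^1$-invariant geodesic ray $\hat v^1_t$ in \eqref{eq:phi-cstar-11}--\eqref{eq:phi-cstar-12}, and is psh on $X^1_{\mathbb C}$ because $\hat v^1$ is a geodesic ray in ${\rm PSH}(X,\phi)$ and the identification across $\{s=0\}$ is precisely what $\phi^1_x$ records.

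For the almost Stein hypothesis, I would use positivity of $L$: there is some power $kL$ with a section $\sigma$ satisfying $\sigma(x)\neq 0$, so $X\setminus\{\sigma=0\}$ is Stein. Pulling $\{\sigma=0\}\times\mathbb C$ back through $\mu^1$ produces a smooth $\mathbb C^*$-invariant divisor $Z_{\mathbb C}\subset X^1_{\mathbb C}$ whose complement is Stein and which is disjoint from $Y\times\mathbb C=\{x\}\times\mathbb C$, hence intersects it properly. With these verifications, the Second Main Theorem applies. The trivial extension $\mu^1_0{}^*(F_Y)$ of $F_Y$ to $X^1_0=\mathbb C^n$ corresponds under admissibility to $d\xi_1\wedge\cdots\wedge d\xi_n\otimes\bm e_0$, and
$$\|F_Y\|^2=\int_{X^1_0} i^{n^2}\mu^1_0{}^*(F_Y)\wedge\overline{\mu^1_0{}^*(F_Y)}\,e^{-\phi^1_x}=I_1<\infty.$$
The Second Main Theorem then produces $F_X\in H^0(X,K_X+L)$ with $F_X|_{\{x\}}=F_Y$, i.e.\ $F_X(x)=dz$, and with the norm bound $I_1$.

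The main obstacle I expect is the careful verification that $\phi^1_{\mathbb C}$ extends as an $S^1$-invariant psh function across the zero fiber with the value $\phi^1_x$ determined by the weighted canonical growth condition; this is the weighted analog of \eqref{eq:s-to-zero} and requires the Ross--Witt Nyström picture to control the asymptotics of $\hat v^1_t$ along the weighted directions coming from the first row of $\beta$. Checking that admissibility holds at the level of $L^1_{\mathbb C}$ (not just $K_{X^1_{\mathbb C}/\mathbb C}$) is straightforward since $L^1_{\mathbb C}=\mu^*(L\times\mathbb C)$ by definition \eqref{eq:LC-smt}, so the nontrivial work is entirely in the relative canonical identification.
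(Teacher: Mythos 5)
Your proposal is correct and follows essentially the same route as the paper, which proves this theorem precisely by applying the Second Main Theorem to the weighted canonical weak K\"ahler deformation $(L^1_{\mathbb C}, e^{-\phi^1_{\mathbb C}})$ over $X^1_{\mathbb C}$ with $Y=\{x\}$, the trivial extension on the central fiber $\mathbb C^n$ having norm exactly $I_1$. Your extra verifications (weighted admissibility via $z_j=s^{\beta_{1j}}\xi_j$, $S^1$-invariance, and the almost Stein hypothesis — where one should take a Bertini-generic section of $kL$ so the divisor is actually smooth) are details the paper leaves implicit, and they check out.
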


		Similar to the canonical growth condition in \cite{WN18}, $\phi^1_{x}$ is also $S^1$-invariant:
		$$
		\phi^1_{x} (s^{\beta_{11}}\xi_1, \cdots, s^{\beta_{1n}}\xi_n) = \phi^1_{x}(\xi), \ \ \xi\in\mathbb C^n, \ \ |s|=1.
		$$ 
		The proof of Lemma 2.21 in \cite{WN18} also gives (see \cite{Tes24} the full proof):
		\begin{equation}\label{eq:phi1x-alpha}
			\phi^1_{x} =\sup_{\alpha_1 \geq 0} 	\phi^1_{x, \alpha_1}, 
		\end{equation}
		where each $\phi^1_{x,\alpha_1}$ is the weighted $\alpha_1$-$\log$ homogeneous part of $\phi_x^1$ satisfying
		\begin{equation}\label{eq:phi1x-alpha1}
			\phi^1_{x,\alpha_1}(s^{\beta_{11}}\xi_1, \cdots, s^{\beta_{1n}}\xi_n) =	\phi^1_{x,\alpha_1}(\xi) + \alpha_1 \log|s^2|,  \ \ \xi\in\mathbb C^n, \ \ s\in \mathbb C^*.
		\end{equation}
		
		\subsubsection{Toric degeneration} Until now, we have only used the first row of our weight matrix $\beta$. To continue the process, we fix $\alpha_1$ and define the following test curve on $\mathbb C^n$:
		\begin{equation}\label{eq:valpha2}
			v^2_{\alpha_1,\alpha_2}=\sup\{\sigma\in {\rm PSH}(\mathbb C^n):  \sigma\leq \phi^1_{x,\alpha_1} \, \text{and $\nu_{2}(\sigma)\geq \alpha_2$}\},
		\end{equation}
		where
		$$
		\nu_{2}(\sigma):=\sup\left\lbrace\alpha\geq 0: \sigma\leq \alpha\log \left(\sum_{j: \beta_{2j}>0} |\xi_j|^{\frac2{\beta_{2j}}}\right)+C_K \ \text{for every compact $K\subset\mathbb C^n$} \right\rbrace.
		$$
		One may observe that each $v^2_{\alpha_1,\alpha_2}$ is weighted $\alpha_1$-$\log$ homogeneous. Similar to  \eqref{eq:phi-cstar-12}, we define
		\begin{equation}\label{eq:phi-cstar-21}
			\phi^2_{\mathbb C^*}(\xi,s):=\sup_{\alpha_1\geq 0} \hat v_{\alpha_1, -\log|s^2|}(\xi), \ \   \hat v_{\alpha_1, t}:=\sup_{\alpha_2 \geq 0} \{v^2_{\alpha_1,\alpha_2}+\alpha_2t\},
		\end{equation}
		for $\xi\in \mathbb C^n$ and $0<|s|<1$;  and
		\begin{equation}\label{eq:phi-cstar-22}
			\phi^2_{\mathbb C^*}(\xi,s):=\phi^1_{x}(\xi)
		\end{equation}
		for  $\xi\in \mathbb C^n$ and $|s|\geq 1$. Then  $\phi^2_{\mathbb C^*}$ extends to an $(S^1)^2$-invariant psh function $\phi^2_{\mathbb C}$ on 
		$$
		X^2_{\mathbb C}:=\mathbb C^{n+1}=\{(\eta,s)\in \mathbb C^n\times\mathbb C: (s^{\beta_{21}}\eta_1, \cdots, s^{\beta_{2n}}\eta_n)\in \mathbb C^n\}.
		$$
		Write $\phi^2_x:=\phi^2_{\mathbb C}|_{X^2_0}$, our second main theorem (or follow the proof of Theorem B and directly prove the monotonicity property of the Bergman kernel at $0$ for each psh weight $\phi^2_{\mathbb C}|_{X^2_s}$) gives the following estimate for $I_1$ in \eqref{eq:RWThC1-int-condition}
		\begin{equation}\label{eq:RWThC2-int-condition}
			I_1\leq I_2:=	\int_{\mathbb C^n} i^{n^2} d\eta \wedge \overline{d\eta} \, e^{-\phi_{x}^2(\eta)}.
		\end{equation}
		One may then inductively define $\phi_{\mathbb C}^k, \phi^k_x$ and $I_k$ for all $1\leq k\leq n$.
		
				\begin{definition}\label{de:wccGk} We call $\phi^k_{x}$ the $k$-th weighted canonical growth condition of $(L, e^{-\phi})$ at $x$.
		\end{definition}

Apply the second main theorem $n$-times (to each $\phi_{\mathbb C}^k$ on $X_{\mathbb C}^k$), we obtain the following refined version of Theorem  \ref{th:RWThC1}.
		
		\begin{theorem}\label{th:RWThCn} Let $(L, e^{-\phi})$ be a positive line bundle over an $n$-dimensional compact complex manifold $X$. Fix $x\in X$; with respect to the notations above, if
			\begin{equation}\label{eq:RWThCn-int-condition}
				I_1:=	\int_{\mathbb C^n} i^{n^2} d\xi \wedge \overline{d\xi} \, e^{-\phi_{x}^1(\xi)} <\infty , \ \ \ d\xi:=d\xi_1\wedge\cdots \wedge d\xi_n,
			\end{equation}
			then there is a holomorphic section $F_X$ of $K_{X}+L$ such that 
			\begin{equation}\label{eq:RWThCn-estmate}
				F_X(x)=dz, \ \ \int_{X} i^{n^2} F_X \wedge \overline{F_X} \, e^{-\phi} \leq  I_1 \leq I_2 \leq \cdots \leq I_n=	\int_{\mathbb C^n} i^{n^2} d\xi \wedge \overline{d\xi} \, e^{-\phi_{x}^n(\xi)}. 
			\end{equation}
		\end{theorem}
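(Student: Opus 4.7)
The proof is by iterating the Second Main Theorem along the tower of weighted canonical weak K\"ahler deformations at $x$:
$$
(X, \phi) \longleftarrow (\mathbb C^n, \phi^1_x) \longleftarrow (\mathbb C^n, \phi^2_x) \longleftarrow \cdots \longleftarrow (\mathbb C^n, \phi^n_x).
$$
Each arrow represents the admissible $\mathbb C^*$-degeneration $X^k_{\mathbb C}$ with its $S^1$-pseudoeffective weight $\phi^k_{\mathbb C}$: the general fiber at $s=1$ carries the previous-level weight and the central fiber at $s=0$ carries $\phi^k_x$. The first arrow ($k=1$) is the canonical weak K\"ahler deformation already used in Theorem \ref{th:RWThC1}; each $X^k_{\mathbb C}$ for $k\ge 2$ is simply $\mathbb C^{n+1}$, trivially Stein, with admissibility automatic since all the relevant canonical bundles are trivial.

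To prove the chain of inequalities, I would induct on $k$. The inequality $I_1 \le I_2$ is already established in the discussion preceding the theorem, via Berndtsson's subharmonicity of the Bergman kernel at the origin for the family of weights $\phi^2_{\mathbb C}|_{X^2_s}$ combined with $S^1$-invariance (giving convexity, hence monotonicity, in $\log|s|$). The exact same template iterates: for the step $I_k \le I_{k+1}$, apply Berndtsson's theorem to $\phi^{k+1}_{\mathbb C}$ on $X^{k+1}_{\mathbb C}$, observe that $\phi^{k+1}_{\mathbb C}$ is $(S^1)^{k+1}$-invariant (in particular $S^1$-invariant in $s$), and read off the Bergman kernel at the origin on the fibers at $s=0$ and $s=1$ in the appropriate normalization.

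For the existence and norm bound on $F_X$, apply Theorem \ref{th:RWThC1} at the $k=1$ stage, which is itself the Second Main Theorem applied to the canonical weak K\"ahler deformation on $X^1_{\mathbb C}$. The trivial extension of $dz$ at $x$ across the central fiber $\mathbb C^n$ of $X^1_{\mathbb C}$ is the constant form $d\xi$, whose $L^2$ norm with respect to $\phi^1_x$ is exactly $I_1$. Combined with the chain $I_1 \le I_2 \le \cdots \le I_n$ just established, this yields \eqref{eq:RWThCn-estmate}.

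The main obstacle is a bookkeeping point: ensuring that at each intermediate level the constant form $d\xi$ on $\mathbb C^n$ realizes $I_k$ as the relevant Bergman-kernel norm at the origin, so that Berndtsson monotonicity from the $\mathbb C^*$-degeneration $X^{k+1}_{\mathbb C}$ translates cleanly into the numerical inequality $I_k \le I_{k+1}$ (rather than only into the weaker $\min_{F(0)=dz}\,\|F\|^2_{\phi^k_x} \le I_{k+1}$). This rests on the inductive $(S^1)^k$-invariance of $\phi^k_x$ combined with the weighted homogeneity structure \eqref{eq:phi1x-alpha1}, which forces the constant to be the extremal function through a weighted submean-value argument parallel to the proof of Proposition \ref{pr:toric}.
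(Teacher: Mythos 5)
Your proposal is correct and takes essentially the same route as the paper: existence of $F_X$ with $F_X(x)=dz$ and norm at most $I_1$ comes from Theorem \ref{th:RWThC1} (the second main theorem applied to $\phi^1_{\mathbb C}$ on $X^1_{\mathbb C}$), and the chain $I_1\leq I_2\leq\cdots\leq I_n$ comes from applying the second main theorem, or equivalently the Theorem-B-style monotonicity of the Bergman kernel at the origin, to each subsequent $\phi^k_{\mathbb C}$ on $X^k_{\mathbb C}$. The extremality-of-the-constant bookkeeping you flag is precisely what the paper handles implicitly through the $S^1$-invariance (weighted submean value) argument as in the proof of Theorem B and Proposition \ref{pr:toric}.
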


		From the above construction, we know that each $\phi^k_{x}$ is $(S^1)^k$-invariant in the following sense
		$$
		\phi^k_{x} (s^{\beta_{j1}}\xi_1, \cdots, s^{\beta_{jn}}\xi_n) = \phi^k_{x}(\xi), \ \ \xi\in\mathbb C^n, \ \ |s|=1, \ \ 1\leq j\leq k.
		$$ 
		In particular, since $\det\beta\neq 0$, we know that $\phi_x^n$ is toric:
		$$
		\phi^n_{x}(\xi)=\phi^n_{x}(|\xi_1|, \cdots, |\xi_n|).
		$$
		Hence the above process gives an $n$-times deformation to the weighted normal bundles, which induces a sequence of degenerations:
		\begin{equation}\label{eq:RW-xu}
			(X, \phi) \to (X_0^1, \phi_x^1)\to\cdots \to (X_0^n, \phi_x^n)=(\mathbb C^n, \phi_x^n).
		\end{equation}
		Since the final metric potential $\phi_x^n$ is toric, we call $(\mathbb C^n, \phi_x^n)$ in \eqref{eq:RW-xu} a toric degeneration of $(X, \phi)$. The toric $\phi_x^n$ decodes many properties of $(X, L, \phi)$. One example is the following result proved in \cite{Tes24}:
		
		\begin{theorem}\label{th:RW-Tes} Let $(L, e^{-\phi})$ be a positive line bundle over an $n$-dimensional compact complex manifold $X$. Fix $x\in X$ and an infinitesimal weight $\beta$, then the  associated $n$-th weighted canonical growth condition can be write as
			\begin{equation}\label{eq:RW-Tes1}
				\phi^n_{x}(\xi)=u(\log|\xi_1^2|, \cdots, \log|\xi_n^2|),
			\end{equation}
			for some convex increasing function $u$ on $\mathbb R^n$ with
			\begin{equation}\label{eq:RW-Tes2}
				\overline{\nabla u(\mathbb R^n)}=\Delta_\beta.
			\end{equation}
		\end{theorem}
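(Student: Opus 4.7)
The plan is to reduce Theorem~\ref{th:RW-Tes} to the unweighted case proved in \cite{WN18} by establishing a monomial description of $\phi^n_x$ of the form
\begin{equation*}
\phi^n_x = \sup\nolimits^{\star} \left\{ \tfrac{1}{k} \log |F_{\mathrm{hom},\beta}|^2 : F \in H^0(X, kL),\ \sup_X |F|^2 e^{-k\phi} \leq 1,\ k \in \mathbb{N} \right\},
\end{equation*}
where $F_{\mathrm{hom},\beta}$ is an iterated homogenization of the local expansion of $F$ around $x$, performed once for each row of $\beta$, in the same spirit as \eqref{eq:new-hom1}--\eqref{eq:new-homj} but with $s \mapsto s^{\beta_{jk}}$ in the $k$-th slot at the $j$-th stage. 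The $S^1$-equivariance \eqref{eq:phi1x-alpha1} at each level together with $\det \beta \neq 0$ forces $\phi^n_x$ to be invariant under the full $n$-torus, giving $\phi^n_x(\xi) = u(\log|\xi_1|^2, \ldots, \log|\xi_n|^2)$; plurisubharmonicity plus toric invariance makes $u$ convex, and positivity of $dd^c \phi$ in the degeneration forces $u$ to be increasing in each variable.

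The first main step is to establish the monomial formula inductively in $k$. For $k=1$ this is the weighted analogue of \cite[Section~8]{WN18}: any normalized section $F$ with $\sup_X|F|^2e^{-k\phi}\le 1$ produces, after first-stage homogenization, a weighted $\alpha_1$-log-homogeneous function bounded above by $\phi^1_{x,\alpha_1}$, and the supremum of such contributions recovers $\phi^1_x$ by \eqref{eq:phi1x-alpha}. For the inductive step, one applies the same construction inside $(\mathbb{C}^n, \phi^k_x)$, using the test curves $v^{k+1}_{\alpha_1,\dots,\alpha_{k+1}}$ defined in \eqref{eq:valpha2} and observing that a section of the trivial bundle over $\mathbb{C}^n$ with correct weighted growth at infinity lifts to a section of $H^0(X,kL)$ modulo negligible terms, by the Ohsawa--Takegoshi step already built into the passage $(X^k_{\mathbb{C}}, \phi^k_{\mathbb{C}}) \to (X^{k+1}_{\mathbb{C}}, \phi^{k+1}_{\mathbb{C}})$ used in Theorem~\ref{th:RWThCn}.

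The second main step is the identification $\overline{\nabla u(\mathbb{R}^n)} = \Delta_\beta$. From the monomial formula, each $F \in H^0(X, kL)$ with valuation vector $\nu_\beta(F) = \nu$ satisfies $F_{\mathrm{hom},\beta}(\xi) = c_\nu\,\xi^\nu$, so
\begin{equation*}
u(y) \geq \sup\nolimits_{F,k} \left\{ \tfrac{\nu_\beta(F)}{k} \cdot y - c[\phi]\!\left(\tfrac{\nu_\beta(F)}{k}\right) \right\},
\end{equation*}
where $c[\phi](\alpha) := \liminf \frac{1}{k}\log \sup_X |F|^2 e^{-k\phi}$ is the weighted Chebyshev transform in the sense of \eqref{eq:RW-C4-new}. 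Passing to the closure and applying the subadditivity $\nu_\beta(FG) = \nu_\beta(F)+\nu_\beta(G)$, the right-hand side converges to the Legendre transform of the convex function $c[\phi]$ with effective domain $\Delta_\beta$. The reverse inequality follows by choosing, for each interior rational $\alpha \in \Delta_\beta$, a sequence of sections with $\frac{1}{k}\nu_\beta(F) \to \alpha$ and normalizing; then $u = (c[\phi])^*$, and standard convex duality gives $\overline{\nabla u(\mathbb{R}^n)}$ equal to the closure of the subdifferential image of $c[\phi]^*$, which is precisely $\Delta_\beta$.

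The main obstacle is the second direction of the inductive monomial formula: one needs that the sup on the right genuinely recovers all of $\phi^n_x$, not a smaller envelope. In the classical case this is handled in \cite{WN18} via a Bergman-kernel/Demailly-regularization argument at each stage, and carrying this out in the weighted setting requires checking that the weighted homogenization $F_{\mathrm{hom},\beta}$ is the correct object to read off $\nu_\beta$ in the order $\prec_\beta$; equivalently, one must verify that the $\beta$-weighted test curves $v^k$ generate, via their Legendre transforms, the entire class of maximal psh envelopes on $X^k_0$. Once this is in place, the remainder is convex analysis.
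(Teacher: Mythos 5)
Your route is the same one the paper has in mind: deduce toric invariance of $\phi^n_x$ from the iterated weighted $S^1$-invariances together with $\det\beta\neq 0$, establish the monomial description of $\phi^n_x$ by sections (this is exactly \eqref{eq:RW-C1}), identify $u$ with the Legendre transform of the weighted Chebyshev transform \eqref{eq:RW-C4}, and conclude $\overline{\nabla u(\mathbb R^n)}=\Delta_\beta$ by convex duality. Be aware, though, that the paper does not prove Theorem \ref{th:RW-Tes} in-house: it quotes it (and the formula \eqref{eq:RW-C1}) from \cite{Tes24}, and only sketches the easy inclusion in the Remark, namely \eqref{eq:RW-Tes3}--\eqref{eq:RW-Tes5}, which gives $\overline{\nabla u(\mathbb R^n)}\supset\Delta_\beta$. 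So the part of your proposal that is actually carried out coincides with what the paper itself records, and the part you defer is precisely the content of \cite{Tes24}.

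That deferred part is a genuine gap, and you have located it correctly but not closed it. Everything hinges on the upper bound in the monomial formula, i.e.\ that $\phi^n_x$ does not exceed $\sup^\star\bigl\{\tfrac1k\log|c_{\nu_\beta(F)}\xi^{\nu_\beta(F)}|^2\bigr\}$ over normalized sections; the lower bound is the trivial direction already in \eqref{eq:RW-Tes3}. Your ``second main step'' quietly conflates the two directions: the displayed inequality $u(y)\geq\sup_{F,k}\{\cdots\}$ and the subsequent choice of sections with $\nu_\beta(F)/k\to\alpha$ both produce only affine \emph{minorants} of $u$, hence only reprove $\overline{\nabla u(\mathbb R^n)}\supset\Delta_\beta$; the assertion $u=(c[\phi])^*$, equivalently $\overline{\nabla u(\mathbb R^n)}\subset\Delta_\beta$, rests entirely on the unproved equality in the monomial formula. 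Your inductive sketch for that equality (lifting functions on the degenerate model ``modulo negligible terms'' via the Ohsawa--Takegoshi step, and the claim that the weighted test curves $v^k$ generate all maximal envelopes on $X^k_0$) is exactly the weighted analogue of the Bergman-kernel/regularization argument of \cite{WN18} and the additive-order machinery of \cite{WN14}, but as written it is a statement of intent rather than an argument. Two minor points: monotonicity of $u$ needs only that $\phi^n_x$ is psh (locally bounded above) across the coordinate hyperplanes, not positivity of $dd^c\phi$; and for the duality step you should note that $v$ is proper with $\overline{{\rm dom}\,v}=\Delta_\beta$ a convex body (using \eqref{eq:Okoun-beta} and the Fekete/subadditivity argument), since $\overline{\nabla u(\mathbb R^n)}=\overline{{\rm dom}\,v}$ requires this.
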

		
		\noindent
		\textbf{Remark.} \emph{From the above construction and \eqref{eq:betaorder}, we have
			\begin{equation}\label{eq:RW-Tes3}
				\phi^n_{x}(\xi) \geq \frac1k \log \left(|c_{\nu_\beta(F)}\xi^{\nu_\beta(F)}|^2 \right)
			\end{equation}
			for every $F\in H^0(X, kL)\setminus\{0\}$ with $|F|^2e^{-k\phi}\leq 1$ on $X$, which implies that the convex function $u$ in \eqref{eq:RW-Tes1} satisfies that 
			\begin{equation}\label{eq:RW-Tes4}
				u(y) \geq \frac{\nu_\beta(F)}{k} \cdot y + \frac1k \log \left(|c_{\nu_\beta(F)}|^2 \right), \ \ y\in \mathbb R^n.
			\end{equation}
			Thus one may easily obtain
			\begin{equation}\label{eq:RW-Tes5}
				\overline{\nabla u(\mathbb R^n)} \supset \Delta_\beta.
		\end{equation}}

		\subsection{Proof of Theorem D}
		
		Assume there exists an infinitesimal weight $\beta$ such that the interior of $\Delta_\beta$ contains $(1,\cdots, 1)$. Then \eqref{eq:RW-Tes5} implies that $(1,\cdots, 1)$ is an  interior point of the convex set $\nabla u(\mathbb R^n)$. Hence we have
			$$
			\int_{\mathbb C^n} i^{n^2} d\xi \wedge \overline{d\xi} \, e^{-\phi_{x}^n(\xi)} = (2\pi)^n \int_{\mathbb R^n} e^{-u(y)+y_1+\cdots+y_n} \, dy_1\cdots dy_n <\infty.	 
			$$
			Thus Theorem D follows from Theorem \ref{th:RWThCn}.

		\subsection{Jet generations in terms of Okounkov bodies and proof of Theorem E}
		
		The proof of our second main theorem also gives the following jet version of Theorem \ref{th:RWThCn}.
		
		\begin{theorem}\label{th:RWTh-jet} Let $(L, e^{-\phi})$ be a positive line bundle over an $n$-dimensional compact complex manifold $X$. Let $\phi_x^n$ in Definition \ref{de:wccGk} be defined with respect to the "straightened up" infinitesimal Newton-Okounkov body (i.e. the weight $\beta$ satisfies \eqref{eq:T-IOB}).
		If
			\begin{equation}\label{eq:RWTh-jet1}
				\int_{\mathbb C^n} i^{n^2} d\xi \wedge \overline{d\xi} \, (1+|\xi_1|^2)^k e^{-\phi_{x}^n(\xi)} <\infty , \ \ \ d\xi:=d\xi_1\wedge\cdots \wedge d\xi_n,
			\end{equation}
			then for every $c\in \mathbb C^{k+1}$, there exists $F=f(z)dz\otimes \bm e\in H^0(X, K_{X}+L)$ such that 
			$$
			\frac{\partial^j f}{\partial z_1^j}(0)=c_j, \ \ \ 0\leq j\leq k.
			$$
		\end{theorem}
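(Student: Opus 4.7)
The plan is to run the $n$-step toric degeneration used for Theorem \ref{th:RWThCn} starting from a polynomial section on the toric end that encodes the desired jet datum. On $(X_0^n,\phi_x^n)=(\mathbb C^n,\phi_x^n)$ define
$$
G := g(\xi_1)\, d\xi_1\wedge\cdots\wedge d\xi_n \otimes \bm e, \qquad g(\xi_1):=\sum_{j=0}^k \frac{c_j}{j!}\,\xi_1^j,
$$
so that $\partial_{\xi_1}^j g(0)=c_j$ for $0\leq j\leq k$. The pointwise estimate $|g(\xi_1)|^2 \leq (k+1)\bigl(\max_j |c_j/j!|^2\bigr)(1+|\xi_1|^2)^k$ combined with hypothesis \eqref{eq:RWTh-jet1} makes $G$ square-integrable against $e^{-\phi_x^n}$.

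Next I would lift $G$ backwards through the chain $(X_0^n,\phi_x^n)\to\cdots\to(X_0^1,\phi_x^1)\to(X,\phi)$. Each step is a $\mathbb C^*$-degeneration with $S^1$-invariant metric, so the $L^2$-minimal extension is $\mathbb C^*$-equivariant and splits the initial section according to its $\mathbb C^*$-eigendecomposition. With the straightened-up weight \eqref{eq:T-IOB} the monomials $1,\xi_1,\ldots,\xi_1^k$ lie in distinct $\mathbb C^*$-eigenspaces at every stage (their weights are determined by the rows of $\beta$ and the degree in $\xi_1$), so $g$ splits into $k+1$ independent eigencomponents that extend simultaneously to a section $F\in H^0(X,K_X+L)$. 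The compatibility between $\mathbb C^*$-weights at successive steps and the $z_1$-Taylor coefficients at $x$ (via the identifications built into the degeneration) then forces $\partial^j f/\partial z_1^j(0)=c_j$.

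The main obstacle is the degeneration step that scales only $\xi_1$: the second main theorem as stated extends just the $\mathbb C^*$-invariant ($j=0$) part of a limit section, whereas here we need to extend the entire $(k+1)$-dimensional space of $\xi_1^j$-components at once, with estimates that keep them linearly independent. This is exactly what the jet-extension mechanism of Theorem \ref{th:jet} provides: applied to the test curve on $(X_0^1,\phi_x^1)$ that measures $\xi_1$-vanishing order at $0$, whose first $k+1$ jumping numbers are $0,1,\ldots,k$, the Berndtsson filtration and the quotient norms of Proposition \ref{pr:BL-alpha} monitor each $\xi_1^j$-component separately, and the monotonicity in $t$ yields the required $L^2$-bound componentwise. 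For the remaining degeneration steps (rows $3,\ldots,n$ of $\beta$ scale only some $\xi_i$ with $i\geq 2$, under which $g$ is $\mathbb C^*$-invariant; row $1$ scales all coordinates uniformly, so each $\xi_1^j$ already lies in a single eigenline) the ordinary second main theorem applies directly. A mild technical point is that the intermediate fibers $X_0^k$ are non-compact, so Theorem \ref{th:jet} must be replaced by its non-compact analogue in the spirit of Theorem \ref{th:noncompact}; the toric symmetry of $\phi_x^1$ under the $\xi_1$-action puts the relevant test curve in the quasi-linear regime of Definition \ref{de:quasilinear}, so this replacement is available.
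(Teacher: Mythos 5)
The central gap is your claim that lifting $G=g(\xi_1)\,d\xi\otimes\bm e$ backwards through the chain of degenerations ``forces'' $\partial^j f/\partial z_1^j(0)=c_j$. The only extension tool available at each step, the second main theorem, prescribes the value of the extension on $Y=\{x\}$ (the $0$-jet) and nothing more: its fiberwise minimal extensions are obtained by duality against evaluation-type functionals, and the higher $z_1$-derivatives at $x$ of the restriction to $X_1=X$ are simply not determined by the central-fiber datum; $\mathbb C^*$-eigendecomposition on the zero fiber does not propagate to the jet of the minimizer on $X_1$. In particular, your assertion that the first step (uniform scaling, from the compact $X$ to $(\mathbb C^n,\phi_x^1)$) is covered by the ``ordinary second main theorem'' because each $\xi_1^j$ spans an eigenline is exactly where the argument breaks: for $j\geq 1$ the section $\xi_1^j\,d\xi\otimes\bm e$ on the zero fiber is not of the form $\mu_0^*(F_Y)$ with $Y=\{x\}$, so the second main theorem neither extends it nor controls derivatives at $x$. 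This is precisely where the paper introduces a new device: it considers the constrained minimal norm $A^1(s)$ of sections on $X_s$ with prescribed $1$-jet, writes it via \cite[proof of Theorem 3.1]{BL16} as $\sup_b |b_1|^2/\|\xi_{b,s}\|_s^2$ for the $s$-dependent functionals $\xi_{b,s}(F)=sb_0f(0)+b_1\partial f/\partial\xi_1(0)+\cdots+b_n\partial f/\partial\xi_n(0)$ (the factor $s$ is essential to make $\|\xi_{b,s}\|_s$ $S^1$-invariant while keeping $s\mapsto\xi_{b,s}$ a holomorphic section of the dual direct image bundle), and then applies Berndtsson--P\u{a}un positivity to get monotonicity of $A^1(s)$ in $|s|$ at every step of the chain. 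Jets are thus prescribed by constrained minimization on each fiber, not by jet-preservation of an extension; note also that the theorem only needs surjectivity of the jet map, so it suffices to produce sections with leading terms $1,z_1,\dots,z_1^k$ and take linear combinations, whereas hitting the exact values $c_j$ by a single lifted section, as you attempt, is both unnecessary and not something the available tools give.

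Your proposed patch via Theorem \ref{th:jet} does not close this gap. You would apply it on $(X_0^1,\phi_x^1)=(\mathbb C^n,\phi_x^1)$, but Theorem \ref{th:jet} and Proposition \ref{pr:BL-alpha} rest on Berndtsson's theorem in \cite{Bern20}, which requires $\dim H^0<\infty$, and on the surjectivity input from \cite{CDM17}, both tied to compactness; the paper's only non-compact substitute, Theorem \ref{th:noncompact}, has no jet version and comes with qck and quasi-linearity hypotheses (Definition \ref{de:quasilinear}) that you would still have to verify for $\phi_x^1$. Calling this a ``mild technical point'' leaves a substantive unproved extension of the machinery. Moreover, even granting it, the first step from the compact $X$ remains, which your scheme handles with the ordinary second main theorem and which, as explained above, cannot transfer derivative information at $x$. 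So the proposal as written does not prove the statement; the missing idea is the paper's $s$-weighted dual functionals on the direct image bundles of each degeneration step, with \eqref{eq:RWTh-jet1} used only to make the toric-end integrals $\int_{\mathbb C^n}|\xi_1|^{2j}e^{-\phi_x^n}$, $0\leq j\leq k$, finite.
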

	
	\begin{proof} We shall only prove the $k=1$ case (the general case follows by a similar argument). Since \eqref{eq:RWTh-jet1} implies \eqref{eq:RWThCn-int-condition}, we know that $K_{X}+L$ has a section non-vanishing at $x$. Thus it suffices to show that $K_{X}+L$ has a global holomorphic section $F$ with
		$$
		F=(z_1+O(|z|^2)) \, dz\otimes \bm e 
		$$
		around $x$. Recall that $X_{\mathbb C}^1$ denotes the total space of the deformation to $T_xX$ with fiber $X_s^1$. Put 
		$$
		H_s:=H^0(X_s^1, K_{X^1_s}+L^1_s).
		$$ 
		The idea is to modify the proof of the second main theorem so that it fits for jet. For each $b\in \mathbb C^{n+1}$ and $F=f(\xi)d\xi\otimes \bm e\in H_s$, let us define the functional (different from the proof of the second main theorem, it is crucial that $\xi_{b,s}$ now depends on $s$ --- to make sure that $||\xi_{b,s}||_s$ is $S^1$-invariant)
		$$
		\xi_{b,s}(F):=sb_0f(0)  + b_1 \partial f/\partial \xi_1(0) +\cdots +b_n \partial f/\partial \xi_n(0),
		$$
		on $H_s$, $s\in\mathbb C$. Put
		$$
		A^1(s):= \min\{||F||_s^2: F=(\xi_1+O(|\xi|^2)) \, d\xi\otimes \bm e\in H_s\},
		$$
		where 
		$$
		||F||_s^2:=\int_{X_s^1} i^{n^2} F\wedge \bar F\, e^{-\phi_s^1}.
		$$
		Then \cite[Proof of Theorem 3.1]{BL16} implies that 
		$$
		A^1(s)=\sup_{b\in\mathbb C^{n+1}} \frac{|b_1|^2}{||\xi_{b,s}||_s^2}.
		$$
		Since $\xi_{b,s}$ is holomorphic in $s$, it defines a holomorphic section of the dual of the direct image bundle (see \cite[Definition 2.14]{Wang17}). Hence the Berndtsson-P\u aun positivity implies that $||\xi_{b,s}||_s$ is subharmonic in $s$ (thus convex increasing in $\log|s|$ since it depends only on $|s|$).  Thus $A^1(s)$ is decreasing in $|s|$, which gives
		\begin{equation}\label{eq:RWTh-jet2}
		A^1(1)\leq A^1(0)= \int_{\mathbb C^n} i^{n^2} d\xi \wedge \overline{d\xi} \, |\xi_1|^2e^{-\phi_{x}^1(\xi)}.
		\end{equation}
	For the second degeneration with total space $X_{\mathbb C}^2$, one may similarly use functionals 
	$$
	\xi_{d,s}(F):=d_1 \partial f/\partial \xi_1(0)+ sd_2 \partial f/\partial \xi_2(0) +\cdots +sd_n \partial f/\partial \xi_n(0), \ d\in \mathbb C^n,
	$$
	to prove
	$$
	\int_{\mathbb C^n} i^{n^2} d\xi \wedge \overline{d\xi} \, |\xi_1|^2e^{-\phi_{x}^1(\xi)} \leq \int_{\mathbb C^n} i^{n^2} d\xi \wedge \overline{d\xi} \, |\xi_1|^2e^{-\phi_{x}^2(\xi)}.
	$$
	For $X_{\mathbb C}^k$ with $k\geq 3$, a single functional
	$$
	\xi(F):=\partial f/\partial \xi_1(0)
	$$
	is enough for us to show that 
\begin{equation}\label{eq:RWTh-jet3}
	\int_{\mathbb C^n} i^{n^2} d\xi \wedge \overline{d\xi} \, |\xi_1|^2e^{-\phi_{x}^2(\xi)} \leq \cdots \leq \int_{\mathbb C^n} i^{n^2} d\xi \wedge \overline{d\xi} \, |\xi_1|^2e^{-\phi_{x}^n(\xi)}.
	\end{equation}
Thus we have $A^1(1)\leq \int_{\mathbb C^n} i^{n^2} d\xi \wedge \overline{d\xi} \, |\xi_1|^2e^{-\phi_{x}^n(\xi)} <\infty \ \text{(by the $k=1$ case of \eqref{eq:RWTh-jet1}}$).
	\end{proof}
		
		%Similarly we obtain the following jet version of Theorem D, E.

		%\medskip
		%\noindent
		%\textbf{Theorem D'.} \emph{Let $L$ be a positive line bundle over a compact complex manifold $X$. Fix $x\in X$, assume that there exists an infinitesimal weighted Okounkov body $\Delta_\beta$ of $L$ at $x$ (see Definition \ref{de:betaorder}) whose interior contains $(k+1,\cdots, 1), \cdots, (1,\cdots, k+1)$. Then global holomorphic sections of $K_{X}+L$ generate all $k$-jets at $x$.}
		
		%\medskip
		
		%If we apply the above theorem to the generic classical (unweighted) infinitesimal Okounkov bodies then the recent result \cite[Theorem 1.2]{FL25} of Fulger and Lozovanu gives:
		
		%\medskip
		%\noindent
		%\textbf{Theorem E'.} \emph{Let $L$ be a positive line bundle over an $n$-dimensional compact complex manifold $X$. Fix $x\in X$, let $\epsilon_j(L;x)$, $1\leq j\leq n$, be the $j$-th infinitesimal successive minima of $L$ at $x$ (see the end of the first page of \cite{FL25} for the definition). If 
			%\begin{equation}\label{eq:ThmD'}
			%	\frac{k+1}{\epsilon_1(L;x)}+ \cdots + \frac1{\epsilon_n(L;x)}<1
		%	\end{equation}
		%	then  global holomorphic sections of $K_{X}+L$ generate all $k$-jets at $x$.} 
		Now we are ready to prove Theorem E.

				\begin{proof}[Proof of Theorem E] By Theorem 1.2 in \cite{FL25}, there exists an infinitesimal $\Delta_\beta$ (the generic straightened up infinitesimal Okounkov body at $x$) that contains the simplex 
			$$
			\left\lbrace \alpha\in \mathbb R_{\geq 0}^n: \frac{\alpha_1}{\epsilon_1(L,x)} +\cdots+\frac{\alpha_n}{\epsilon_n(L,x)} \leq 1 \right\rbrace.
			$$
			Hence \eqref{eq:ThmD} implies that $(j+1,1, \cdots,1)$ ($j=0, \cdots, k$) lies in the interior of $\Delta_\beta$. Thus the assumption in \eqref{eq:RWTh-jet1} satisfies and Theorem \ref{th:RWTh-jet} gives the $k$-jets generation (consider a generic choice of the coordinate $z_1$). 
		\end{proof}

		\subsection{Chebyshev transform and Chebyshev body}
		
		\subsubsection{Weighted Chebyshev transform}
		
		Let $\phi_x^n$ be the toric degeneration of $\phi$ constructed above. The following formula   
		\begin{equation}\label{eq:RW-C1}
			\phi_x^n(\xi)=\sup^*\left\lbrace \frac1k \log|c_{\nu_\beta(F)} \xi^{\nu_\beta(F)}|^2: F\in H^0(X, kL), \sup_{X}\,  |F|^2e^{-k\phi}=1 \right\rbrace
		\end{equation}
		is proved in \cite{Tes24} (which in fact implies \eqref{eq:RW-Tes2}). Hence we have
		\begin{equation}\label{eq:RW-C2}
			\phi^n_{x}(\xi)=u(\log|\xi_1^2|, \cdots, \log|\xi_n^2|),
		\end{equation}
		with $u=v^*$, i.e. $u$ is the Legendre transform 
		\begin{equation}\label{eq:RW-C3}
			u(y)=\sup_{\alpha\in \mathbb R^n} \{y\cdot \alpha-v(\alpha)\}
		\end{equation}
		of the lower semi-continuous function convex function $v$ on $\mathbb R^n$ defined by
		\begin{equation}\label{eq:RW-C4}
			v(\alpha):=\liminf_{\nu_\beta(F)/k \to \alpha, \, k\to\infty}  \left\lbrace \frac1k \log \left(\sup_X|F|^2e^{-k\phi}\right): F\in H^0(X, kL), c_{\nu_\beta(F)} =1\right\rbrace.
		\end{equation}
		for $\alpha\in \Delta_{\beta}$ and $v(\alpha)=\infty$ for $\alpha\notin \Delta_{\beta}$. The restriction of $v$ to the interior of $\Delta_{\beta}$ can be seen as a weighted version of the Chebyshev transform $c[\phi]$ in \cite{WN14}.

		\begin{definition}\label{de:beta-cheby} We call $v$ the $\beta$-Chebyshev transform of $\phi$.
		\end{definition}
		
		\subsubsection{Chebyshev body and Bergman kernel}
		
		Now let us look at the non-compact case. To simplify the discussions, we shall only consider bounded hyperconvex domains. Recall that a domain $X$ in $\mathbb C^n$ is called hyperconvex if there is a negative smooth strictly psh function $\rho$ on $X$ such that the sublevel set $\{\rho<-\varepsilon\}$ is relatively compact in $X$ for every $\varepsilon>0$. Let $X$ be a bounded hyperconvex domain in $\mathbb C^n$ and $\phi$ be a bounded psh function on $X$. Fix $x\in X$ and an infinitesimal weight $\beta$ (see Definition \ref{de:weightmatrix}). One may still use \eqref{eq:valpha1} to define the test curve $v^1_\alpha$, the only difference is that  the critical value $\lambda_v^1=\infty$ now. But one can always fix a positive $\lambda$ (let $\lambda$ go to infinity later) and look at the test curve $v_\alpha$ such that $v_\alpha=v^1_\alpha$ for $\alpha\leq \lambda$ and $v_\alpha=-\infty$ for $\alpha>\lambda$. Then $v$ has finite critical value $\lambda$. To simplify the argument, we shall look at $v^1_\alpha$ directly and define the $k$-th weighted canonical growth condition $\phi_x^k$ at $x$ as in Definition \ref{de:wccGk}. Similar to Theorem \ref{th:RWThCn}, our second main theorem (or Theorem \ref{th:noncompact}) gives:

		\begin{theorem}\label{th:RWThCn-domain} Let $X$ be a bounded hyperconvex domain in $\mathbb C^n$ and $\phi$ be a bounded psh function on $X$. Fix $x\in X$ and an infinitesimal weight $\beta$. Then there exists a holomorphic function $f$ on $X$ with $f(x)=1$ and
			\begin{equation}\label{eq:RWThCn-domain}
				\int_{X} |f|^2 e^{-\phi} \leq  \int_{\mathbb C^n}  e^{-\phi_{x}^1} \leq \cdots \leq  \int_{\mathbb C^n}  e^{-\phi_{x}^n}.
			\end{equation}
		\end{theorem}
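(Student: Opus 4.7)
The plan is to mimic the proof of the compact analogue, Theorem \ref{th:RWThCn}, by iteratively applying the Second Main Theorem (or its non-compact extension Theorem \ref{th:noncompact}) to the sequence of weighted canonical weak K\"ahler deformations
$$
(X,\phi)\to(\mathbb C^n,\phi^1_x)\to\cdots\to(\mathbb C^n,\phi^n_x)
$$
constructed in Definition \ref{de:wccGk}. The bounded hyperconvex hypothesis on $X$ is used precisely to ensure that the total space of the first $\mathbb C^*$-degeneration is Stein, which is the ambient hypothesis for the Second Main Theorem.

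First, I would build the total space $X^1_{\mathbb C}$ by gluing $X\times\mathbb C^*$ with the local model
$$
\Omega_{\mathbb C}=\{(\xi,s)\in\mathbb C^n\times\mathbb C:(s^{\beta_{11}}\xi_1,\ldots,s^{\beta_{1n}}\xi_n)\in\Omega\},
$$
exactly as in Example 4. Using the negative smooth strictly psh exhaustion $\rho$ of $X$, a function of the form $\rho\circ\mu+|s|^2+|\xi|^2$ (patched across the two charts) should give a psh exhaustion of $X^1_{\mathbb C}$, mirroring the one-line observation in the proof of Theorem B that the analogous $\Omega_{\mathbb C}$ there is pseudoconvex. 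The canonical weak K\"ahler potential $\phi^1_{\mathbb C}$ from \eqref{eq:phi-cstar-11}--\eqref{eq:phi-cstar-12} is $S^1$-invariant for the $\mathbb C^*$-action \eqref{eq:B-action-beta}, and the admissibility condition of Definition \ref{de:cstar-deg-Y} is automatic since $L$ and $K_X$ are trivial over a domain. Applying the Second Main Theorem to this degeneration with $Y=\{x\}$, the trivial extension across $X^1_0\cong\mathbb C^n$ of the value $1$ at $x$ is the constant function $1$, whose $L^2$-norm is $\int_{\mathbb C^n}e^{-\phi^1_x}$, and the theorem then outputs a holomorphic $f$ on $X=X^1_1$ with $f(x)=1$ and $\int_X|f|^2e^{-\phi}\le\int_{\mathbb C^n}e^{-\phi^1_x}$.

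For the intermediate inequalities $\int_{\mathbb C^n}e^{-\phi^k_x}\le\int_{\mathbb C^n}e^{-\phi^{k+1}_x}$, the plan is to repeat the argument for the $(k+1)$-st degeneration, whose total space $X^{k+1}_{\mathbb C}\simeq\mathbb C^{n+1}$ is trivially Stein and whose weight is $S^1$-invariant by construction; as in the proof of Theorem B, Berndtsson's subharmonicity of $\log K_s(0)$ combined with this $S^1$-invariance forces $K_s(0)$ to be a radial subharmonic, hence increasing, function of $|s|$, and the successive $(S^1)^k$-symmetries of $\phi^k_x$ make the candidate $f=1$ realize the Bergman kernel at the origin on each side, giving the required monotonicity of the integrals.

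The main technical obstacle is that the critical value $\lambda_{v^1}$ defined via \eqref{eq:valpha11} is infinite in the non-compact setting, so the Second Main Theorem does not literally apply to $v^1$. Following the paper's own suggestion, the cleanest fix is to work with the truncated test curve $v^{1,\lambda}_\alpha:=v^1_\alpha$ for $\alpha\le\lambda$ and $-\infty$ otherwise, which has finite critical value $\lambda$, run the full argument, and let $\lambda\to\infty$ using monotone convergence of the sup defining $\phi^1_x$ in the spirit of \eqref{eq:phi1x-alpha}. Alternatively, one may verify that $v^1$ is quasi-linear in the sense of Definition \ref{de:quasilinear} and apply Theorem \ref{th:noncompact} directly, or bypass the Second Main Theorem in the first step altogether and prove the Bergman-kernel monotonicity on $X^1_{\mathbb C}$ via a direct application of Berndtsson's theorem, exactly as in the short proof of Theorem B.
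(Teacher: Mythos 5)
Your proposal follows essentially the same route as the paper: the paper proves this theorem by iterating the Second Main Theorem (or Theorem \ref{th:noncompact}) over the weighted canonical weak K\"ahler deformations of Definition \ref{de:wccGk}, exactly as in Theorem \ref{th:RWThCn}, with hyperconvexity supplying the Stein total space and with the same truncation of $v^1$ at a finite $\lambda$ (letting $\lambda\to\infty$) to handle $\lambda_{v^1}=\infty$, and it likewise notes that the intermediate inequalities can be obtained by the direct Bergman-kernel monotonicity argument of Theorem B. Your write-up is a faithful, slightly more detailed rendering of that argument.
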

		
		In case $\phi=0$, the test curve in  \eqref{eq:valpha1} is linear, more precisely, we have $v_\alpha^1=\alpha G$, $\alpha\geq 0$, where
		\begin{equation}\label{eq:RWThCn-domain1}
			G:=\sup\{\sigma\in {\rm PSH}(X):  \sigma\leq 0 \, \text{and $\nu_{1}(\sigma)=1$}\}
		\end{equation}
		is the pluricomplex green function on $X$ with
		$$
		\log \left( |z_1-x_1|^{2/\beta_{11}} + \cdots+   |z_n-x_n|^{2/\beta_{1n}} \right)
		$$
		singularity around $x$. Then we have
		\begin{equation}\label{eq:phix1-new}
			\phi_x^1=\lim_{\lambda\to\infty} \lambda\max\{0, A^1\},
		\end{equation}
	where
			\begin{equation}\label{eq:AZU-beta}
		A^1(\xi):=\limsup_{s\to 0} \{ G(x_1+s^{\beta_{11}}\xi_1, \cdots, x_n+s^{\beta_{1n}}\xi_n) - \log|s^2|\}
	\end{equation}
is a generalized Azukawa pseudometric (see \cite[section 1]{Zwo00} and the original paper \cite{Azu86} for the classical case). Thus we have $\phi_x^1 =0$ on $\{A^1(\xi)\leq 0\}$ 	and $\phi_x^1 =\infty$ on $\{A^1(\xi)> 0\}$. Since $A^1$ is weighted 1-$\log$ homogeneous and continuous (see \cite{Tes24} for the proof, see also \cite{Zwo00} for the unweighted case), we know that $\{A^1(\xi)\leq 0\}$ is the closure of  
		$$
		X^1:=\{\xi\in \mathbb C^n: A^1(\xi)<0\},
		$$
	and $\partial X^1$ has zero Lebesgue measure.
		Thus we have 
		$$
		e^{-\phi_{x}^1}=1_{\overline{X^1}} \  \ \text{and} \ \  \int_{\mathbb C^n}  e^{-\phi_{x}^1}=|X^1|,
		$$
		where $|X^1|$ denotes the Lebesgue measure of $X^1$. For general $1\leq k\leq n$, we have
		$$
		e^{-\phi_{x}^k}=1_{\overline{X^k}},
		$$
		where each $X^k$ is an $(S^1)^k$-invariant bounded hyperconvex domain in $\mathbb C^n$. In particular, $X^n$ is toric around $x$:
		$$
		x+\xi\in X^n \Leftrightarrow x+(|\xi_1|,\cdots, |\xi_n|)\in X^n.
		$$ 
		Similar to \eqref{eq:phix1-new}, we have		
				\begin{equation}\label{eq:phixn-new}
			\phi_x^n=\lim_{\lambda\to\infty} \lambda\max\{0, A^n\},
		\end{equation}
	with (see \cite{Tes24})
				\begin{equation}\label{eq:AZUn-beta}
		A^n(\xi):=\sup^*\left\lbrace \frac{\log \left( |c_{\nu_{\beta}(f)} \xi^{\nu_{\beta}(f)}|^2 \right)}{T_1(\nu_{\beta}(f))} : f\in \mathcal O(X), \ \  \sup_X |f|=1, \ \ T_1(\nu_{\beta}(f))>0 \right\rbrace,
	\end{equation}
where the $\beta$-valuation vector $\nu_{\beta}(f)$ is defined by (see Definition \ref{de:betaorder})
$$
\nu_\beta(f):={\rm min}_{\beta}\{\alpha\in \mathbb N^n: c_\alpha\neq 0\} \ \text{for} \ f(z)=\sum c_\alpha (z-x)^\alpha,
 $$
and $T_1$ is defined in \eqref{eq:T-beta}. Similar to \eqref{eq:RW-C3} and \eqref{eq:RW-C4}, if we introduce $u=v^*$, where
				\begin{equation}\label{eq:Cheby-domain}
	v(\alpha):=\liminf_{\nu_\beta(f)/k \to \alpha, \, k\to\infty}  \left\lbrace \frac1k \log \left(\sup_X|f|^2\right): f\in\mathcal O(X), c_{\nu_\beta(F)} =1\right\rbrace,
\end{equation}
for $\alpha\in\mathbb R^n_{\geq 0}$ and $v(\alpha)=\infty$ for $\alpha\notin\mathbb R^n_{\geq 0}$, then 
\begin{equation}\label{eq:Cheby-domain1}
	\phi_x^n(\xi)=u(\log|\xi_1^2|, \cdots, \log|\xi_n^2|), \ \ \ A^n(\xi)=\sup_{T_1(\alpha)=1} \left\lbrace\sum_{j=1}^n \alpha_j \log |\xi_j^2| -v(\alpha)\right\rbrace,
\end{equation}
and
\begin{equation}\label{eq:Cheby-domain2}
	X^n=\{\xi\in\mathbb C^n: A^n(\xi)<0 \}.
\end{equation}		
		
		\begin{definition}\label{de:cheby-body} We call $X^k$ the $k$-th weighted Chebyshev body of $X$ and
			$$
			X\to X^1 \to \cdots\to X^n
			$$
			a toric degeneration of $X$ to $X^n$.
		\end{definition}
		
		The following is the $\phi=0$ case of Theorem \ref{th:RWThCn-domain}.

		\begin{theorem}\label{th:RWThCn-domain0} Let $X$ be a bounded hyperconvex domain in $\mathbb C^n$. Fix $x\in X$ and an infinitesimal weight $\beta$. Then there exists a holomorphic function $f$ on $X$ with $f(x)=1$ and
			\begin{equation}\label{eq:RWThCn-domain0}
				\int_{X} |f|^2  \leq  |X^1| \leq \cdots \leq  |X^n|.
			\end{equation}
		\end{theorem}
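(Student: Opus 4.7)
The plan is to deduce Theorem \ref{th:RWThCn-domain0} directly from Theorem \ref{th:RWThCn-domain} by specializing to $\phi \equiv 0$ and then identifying each integral $\int_{\mathbb{C}^n} e^{-\phi_x^k}$ with the Lebesgue volume $|X^k|$ of the $k$-th Chebyshev body.

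First, I would apply Theorem \ref{th:RWThCn-domain} to the constant psh function $\phi \equiv 0$, which is certainly bounded on the bounded hyperconvex domain $X$. This immediately produces a holomorphic function $f$ on $X$ with $f(x)=1$ satisfying
\begin{equation*}
\int_X |f|^2 \leq \int_{\mathbb{C}^n} e^{-\phi_x^1} \leq \cdots \leq \int_{\mathbb{C}^n} e^{-\phi_x^n},
\end{equation*}
where $\phi_x^k$ is the $k$-th weighted canonical growth condition at $x$ associated with $\phi\equiv 0$.

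Second, I would translate each right-hand side into a volume. For $\phi\equiv 0$, the test curve defined in \eqref{eq:valpha1} reduces to the linear curve $v_\alpha^1 = \alpha G$, where $G$ is the weighted pluricomplex Green function \eqref{eq:RWThCn-domain1}. By \eqref{eq:phix1-new} this gives
\begin{equation*}
\phi_x^1 = \lim_{\lambda\to\infty}\lambda\max\{0,A^1\},
\end{equation*}
so that $e^{-\phi_x^1} = \mathbf{1}_{\overline{X^1}}$ with $X^1 = \{A^1 < 0\}$. Since $A^1$ is continuous and $\partial X^1$ has zero Lebesgue measure (as noted immediately after \eqref{eq:AZU-beta}), we obtain $\int_{\mathbb{C}^n} e^{-\phi_x^1} = |X^1|$. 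The same argument, now built from \eqref{eq:phixn-new} and the continuity of $A^k$ established in \cite{Tes24}, yields $e^{-\phi_x^k} = \mathbf{1}_{\overline{X^k}}$ and hence $\int_{\mathbb{C}^n} e^{-\phi_x^k} = |X^k|$ for each $1\leq k\leq n$.

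Combining the two steps produces the chain \eqref{eq:RWThCn-domain0}. The only nontrivial point is the vanishing of the Lebesgue measure of each $\partial X^k$, but this is already recorded in the discussion preceding the statement (for $k=1$ it is classical for the Azukawa pseudometric, and for general $k$ it follows from the continuity and weighted logarithmic homogeneity of $A^k$ proved in \cite{Tes24}). Thus no new analytical input is needed beyond invoking Theorem \ref{th:RWThCn-domain} and the identifications already set up in the paper.
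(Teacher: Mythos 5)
Your proposal is correct and follows essentially the same route as the paper: the theorem is presented there precisely as the $\phi\equiv 0$ case of Theorem \ref{th:RWThCn-domain}, with the identifications $e^{-\phi_x^k}=1_{\overline{X^k}}$ and $\int_{\mathbb C^n}e^{-\phi_x^k}=|X^k|$ (via the linear test curve $v^1_\alpha=\alpha G$, the Azukawa-type function $A^1$, and the negligibility of $\partial X^k$) carried out in the discussion immediately preceding the statement. Your observation that the only delicate point is the vanishing Lebesgue measure of the boundaries matches the paper's treatment, which handles it through the continuity and weighted log-homogeneity of $A^1$ (and the analogous facts from \cite{Tes24} for the higher steps).
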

	
\noindent		
\textbf{Remark.} \emph{In case $\beta_{11}=\cdots =\beta_{1n}=1$, the first inequality $\int_{X} |f|^2  \leq  |X^1| $ reduces to Theorem 7.5 in \cite{Blo14}. But the final estimate $|X^1|\leq |X^n|$ is new even in that case.}

		\section{Appendix}
		
		\subsection{A remark on Berndtsson's theorem}

		We shall give a slightly different proof of the main theorem in \cite{Bern20}. The main motivation is to avoid the induction process used there. Let $E$ be a finite dimensional $\mathbb C$-vector space. Let $||\cdot||_\tau$, ${\rm Re}\, \tau>0$, be a family of Hermitian norms on $E$ that depends only on $t={\rm Re}\, \tau$. We shall always assume that the limit 
		$$
		||\cdot||_0:=\lim_{t\to 0} ||\cdot||_t
		$$
		exists and gives a Hermitian norm on $E$, moreover, there exists $\lambda>0$ such that
		$$
		e^{-\lambda t}||^2\cdot||^2_0\leq  ||\cdot||^2_t\leq ||\cdot||_0, \ \ \forall \ t>0.
		$$
		The example in mind is $E=H^0(X, K_X+L)$ with the $t$-norm in \eqref{eq:tnorm-1}. We know that $||\cdot||_\tau$ defines a Hermitian metric on the product bundle $E\times \mathbb H$, where
		$$
		\mathbb H:=\{\tau\in\mathbb C: {\rm Re}\, \tau>0\}
		$$
		denotes the right half plane. Following \cite{Bern20}, we say that $||\cdot||_\tau$ is positively curved if the dual metric $||\cdot||_{*\tau}$ is \emph{negatively curved} on $V\times \mathbb H$, where $V=E^*$ denotes the dual space of $E$, i.e. for every holomorphic mapping
		$$
		f: U \to V, \ \ \text{$U$ is an open set in $\mathbb H$},
		$$
		the function $\log||f(\tau)||_{*\tau}$ is subharmonic for $\tau\in U$. 
		
		\begin{lemma} Assume that $||\cdot||_\tau$ is positively curved on $E\times\mathbb H$. Then  the limit
			$$
			\alpha(u):=\lim_{t\to\infty} \frac{\log(||u||_{*t}^2)}{t}, \ \ u\in V=E^*,
			$$
			exists (we call it the \emph{jumping number} of $u$).
		\end{lemma}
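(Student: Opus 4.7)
The plan is to reduce the existence of the limit to a one-dimensional convexity argument combined with the a priori two-sided bounds on the dual norm.

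First I would fix $u\in V\setminus\{0\}$ and consider the constant holomorphic section $f\equiv u$ of $V\times\mathbb H$. By definition, negative curvature of $||\cdot||_{*\tau}$ means that $\tau\mapsto \log ||f(\tau)||_{*\tau}^2 = \log ||u||_{*\tau}^2$ is subharmonic on $\mathbb H$. By the standing assumption, this function depends only on $t=\mathrm{Re}\,\tau$. A subharmonic function on $\mathbb H$ that is invariant under imaginary translations must be convex in the real variable $t\in (0,\infty)$. Hence
$$
g(t):=\log ||u||_{*t}^2
$$
is a convex function of $t$.

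Next I would translate the given sandwich estimate $e^{-\lambda t}||\cdot||_0^2 \leq ||\cdot||_t^2 \leq ||\cdot||_0^2$ to the dual side. Dualising reverses inequalities and gives
$$
||u||_{*0}^2 \;\leq\; ||u||_{*t}^2 \;\leq\; e^{\lambda t}\, ||u||_{*0}^2, \qquad t>0,
$$
which, since $||\cdot||_0$ is a genuine Hermitian norm, means that $g(0)$ is finite and
$$
0 \;\leq\; g(t)-g(0) \;\leq\; \lambda t .
$$

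Now the result follows from elementary convex analysis. For any convex function $g$ on $[0,\infty)$, the difference quotient $t\mapsto (g(t)-g(0))/t$ is nondecreasing in $t>0$. Combined with the upper bound above, we get
$$
\lim_{t\to\infty}\frac{g(t)-g(0)}{t} \;=\; \sup_{t>0}\frac{g(t)-g(0)}{t} \;\in\; [0,\lambda].
$$
Since $g(0)/t\to 0$, this is exactly
$$
\alpha(u) \;=\; \lim_{t\to\infty}\frac{\log ||u||_{*t}^2}{t},
$$
so the limit exists (and equals a supremum, an observation that will presumably be useful in the surrounding arguments). The case $u=0$ is either excluded or handled by setting $\alpha(0)=-\infty$ by convention.

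There is no real obstacle here: the whole content is packaging the negative curvature hypothesis as convexity in one real variable and then quoting monotonicity of difference quotients for convex functions. The only mildly delicate point is to verify that the $\mathrm{Im}\,\tau$-invariance really does upgrade subharmonicity on $\mathbb H$ to convexity on $(0,\infty)$; this is standard but worth stating explicitly since the family $||\cdot||_\tau$ has been specified only as a function of $\mathrm{Re}\,\tau$ from the outset, so the subharmonicity hypothesis is not vacuous.
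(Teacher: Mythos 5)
Your proof is correct and follows essentially the same route as the paper: negative curvature of the dual metric plus invariance in $\mathrm{Im}\,\tau$ gives convexity of $t\mapsto\log\|u\|_{*t}^2$, from which the existence of the limit follows. The only addition is that you explicitly dualize the two-sided bound $e^{-\lambda t}\|\cdot\|_0^2\leq\|\cdot\|_t^2\leq\|\cdot\|_0^2$ to locate the limit in $[0,\lambda]$, a finiteness point the paper leaves implicit.
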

		
		\begin{proof} From the above definition of positive curved metric, we know that $\log(||u||_{*\tau}^2)$ is a subharmonic function that depends only on $t={\rm Re}\, \tau$. Thus $\log(||u||_{*t}^2)$ is convex in $t>0$, from which the lemma follows.	
		\end{proof}
		
		Put
		$$
		V_\alpha:=\{u\in V: \alpha(u) \leq \alpha\}.
		$$
		One may observe that each $V_\alpha$ is a $\mathbb C$-linear subspace of $V$. Since $V$ has finite dimension, we know that the set of jumping numbers has only finite elements, let us write it as
		$$
		\{\alpha(u): u\in V\setminus\{0\}\}=\{\alpha_1, \cdots, \alpha_N\}, \ \ \alpha_1<\cdots<\alpha_N.
		$$
		Then we obtain a filtration of $V$
		$$
		\{0\} \subset V_{\alpha_1} \subset \cdots \subset V_{\alpha_N}=V.
		$$
		Similar to \eqref{eq:Bo-dual}, we define
		\begin{equation}\label{eq:Bo-dual-E}
			\mathcal F_\alpha:=\{F\in E: u(F)=0, \ \forall \ u\in V_\alpha \}.
		\end{equation}
		Then Berndtsson proved the following result in \cite{Bern20}.
		
		\begin{lemma}\label{le:key1}  With the notation above, we have
			$$
			F\in \mathcal F_\alpha \Leftrightarrow \int_0^\infty ||F||_t^2 e^{t\alpha}\, dt <\infty\Leftrightarrow \limsup_{t\to\infty} \frac{\log (||F||_t^2)}{t} \leq -\min\{\alpha_j: \alpha_j>\alpha\}.
			$$	
		\end{lemma}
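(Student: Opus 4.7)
The plan is to establish the chain of implications $(3) \Rightarrow (2) \Rightarrow (1) \Rightarrow (3)$.

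The implication $(3) \Rightarrow (2)$ is elementary: if $\limsup_{t\to\infty}\log\|F\|_t^2/t \leq -\alpha^+$ where $\alpha^+ := \min\{\alpha_j : \alpha_j > \alpha\}$, then for any $\varepsilon\in(0,\alpha^+-\alpha)$ one has $\|F\|_t^2 e^{\alpha t} \leq e^{-(\alpha^+-\alpha-\varepsilon)t}$ for $t$ large, giving integrability on $(T,\infty)$; boundedness on $[0,T]$ is automatic since $t\mapsto \|F\|_t^2$ is continuous and decreasing by the standing bound $\|\cdot\|_t^2 \leq \|\cdot\|_0^2$.

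For $(2) \Rightarrow (1)$ I will use that positive curvature of $\|\cdot\|_t$ is equivalent to convexity of $t\mapsto \log \|u\|_{*t}^2$ for each $u \in V$. Combined with the existence of $\alpha(u) = \lim\log\|u\|_{*t}^2/t$, convexity forces the derivative to stay at most $\alpha(u)$ throughout, so $\|u\|_{*t}^2 \leq e^{\alpha(u)t}\|u\|_{*0}^2$. For $u\in V_\alpha$ the dual pairing inequality $|u(F)|^2 \leq \|u\|_{*t}^2 \|F\|_t^2$ then gives
\[
\frac{|u(F)|^2}{\|u\|_{*0}^2} \leq e^{\alpha t}\|F\|_t^2.
\]
Since the left side is constant in $t$ while the right side has finite integral on $(0,\infty)$ by $(2)$, the constant must vanish, so $u(F) = 0$ for every $u\in V_\alpha$ and $F \in \mathcal F_\alpha$.

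The technical heart is $(1) \Rightarrow (3)$, and my aim is to avoid the induction on $\dim E$ used in \cite{Bern20}. The key algebraic input is the ultrametric property $\alpha(a+b) = \max(\alpha(a),\alpha(b))$ whenever $\alpha(a) \neq \alpha(b)$, which follows from sub-additivity $\alpha(x+y) \leq \max(\alpha(x),\alpha(y))$ applied both to $a+b$ and to $b = (a+b)+(-a)$. This forces every lift $u\in V$ of a nonzero class $\bar u \in V/V_\alpha$ to share the same jumping number, which is necessarily at least $\alpha^+$. I will fix a basis $v_1,\ldots,v_N$ of $V$ that is orthonormal at $t=0$ and compatible with the flag $V_{\alpha_1}\subset\cdots\subset V_{\alpha_N}$, so that $\alpha(v_i) = \alpha_{j(i)}$. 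In the dual basis $\{F_i\}\subset E$ one has $\mathcal F_{\alpha_j} = \operatorname{span}(F_{d_j+1},\ldots,F_N)$ and $\|F\|_t^2 = c^*H(t)^{-1}c$ with $H(t)_{ik} = \langle v_i,v_k\rangle_{*t}$; for $F\in \mathcal F_\alpha$ the coefficient vector $c$ is supported on indices with $\alpha_{j(i)} \geq \alpha^+$. Rescaling by $D(t) = \operatorname{diag}(e^{-\alpha_{j(i)}t/2})$, the Cauchy--Schwarz bound $|H(t)_{ik}| \leq \sqrt{H(t)_{ii} H(t)_{kk}} \leq e^{(\alpha_{j(i)}+\alpha_{j(k)})t/2}$ makes $D(t)H(t)D(t)$ entrywise bounded; combined with the supportedness of $c$ this yields $\|F\|_t^2 \leq C e^{-\alpha^+ t + o(t)}$, i.e.\ $(3)$, provided the smallest eigenvalue of $D(t)H(t)D(t)$ is bounded below uniformly in $t$. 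Securing this uniform spectral lower bound is the principal obstacle; I plan to extract it from the log-convexity of $\det H(t)$ (a consequence of Berndtsson's determinantal positivity applied to $V$ and its flag-compatible subbundles) together with the matching Hadamard-type upper bound $\log\det H(t) \leq \sum_i \alpha_{j(i)} t$, which together pin down $\det(D(t)H(t)D(t))$ to stay bounded away from zero; combined with the entrywise upper bound on $D(t)H(t)D(t)$ this forces all its eigenvalues to be uniformly bounded away from zero, completing the proof without any induction on $\dim E$.
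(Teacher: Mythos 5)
Your implications $(3)\Rightarrow(2)$ and $(2)\Rightarrow(1)$ are correct, and your reduction of the hard implication $(1)\Rightarrow(3)$ to a spectral lower bound for the rescaled Gram matrix $M(t)=D(t)H(t)D(t)$ is sound and is in fact parallel to what the paper does (the paper rescales the dual metric by $e^{-\tau\lambda_j/2}$ and bounds the eigenvalues $\mu_j(t)$ from below). The gap is exactly at the point where you claim to obtain that bound: log-convexity of $\det H(t)$ together with the Hadamard-type upper bound $\log\det H(t)\leq \bigl(\sum_i\alpha_{j(i)}\bigr)t$ does \emph{not} ``pin down'' $\det M(t)=e^{-(\sum_i\alpha_{j(i)})t}\det H(t)$ from below. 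A convex function lying below a linear function can perfectly well be a linear function of strictly smaller slope (already $g(t)=(\sum_i\alpha_{j(i)}-\delta)t$ is convex and satisfies your upper bound), in which case $\det M(t)\to 0$ exponentially and your smallest-eigenvalue bound fails. Convexity plus an upper bound can never produce a lower bound; what you actually need is the matching \emph{lower} bound on the asymptotic slope, i.e.\ that the jumping number of the induced metric on $\det V$ equals $\sum_i\alpha_{j(i)}$ for a flag-compatible basis. That identity is not formal: Cauchy--Schwarz only gives $|H_{ik}|\leq\sqrt{H_{ii}H_{kk}}$, which leaves room for the Gram matrix to degenerate asymptotically (the ``angles'' between the $v_i$ in $\|\cdot\|_{*t}$ may collapse), and ruling this out is precisely the nontrivial content of Berndtsson's flat reduction (Proposition 2.2 in \cite{Bern20}, i.e.\ Lemma \ref{le:key2} here, used in the paper through \eqref{eq:flat-unique}). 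The paper's ``induction-free'' proof still imports exactly this input; your proposal silently assumes it, so the induction you set out to avoid has not really been avoided--its essential content has been omitted.

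Two smaller points. First, even with the correct input (asymptotic slope of $\log\det H(t)$ equal to $\sum_i\alpha_{j(i)}$), convexity only yields $\det M(t)\geq c_\varepsilon e^{-\varepsilon t}$ for every $\varepsilon>0$, not a bound away from zero; this weaker statement does suffice for $(3)$ (it is what the paper's $\mu_j(t)\geq-\varepsilon$ argument produces), so you should phrase the conclusion that way rather than claiming uniform nondegeneracy. Second, the ultrametric observation about jumping numbers of lifts of classes in $V/V_\alpha$ is correct but does not feed into the determinant estimate, so as written it does no work; the support statement for the coefficient vector $c$ of $F\in\mathcal F_\alpha$ already follows directly from choosing the basis compatible with the filtration.
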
	
		
		To prove the above lemma, we need the following flat reduction lemma of Berndtsson (see Proposition 2.2 in \cite{Bern20}).

		\begin{lemma}\label{le:key2}  With the notation above, there  exists a unique flat metric $||\cdot||_{*t, \infty}$ on $V\times \mathbb H$ such that $||\cdot||_{*0, \infty} =||\cdot||_{*0} $, $||\cdot||_{*t, \infty} \geq ||\cdot||_{*t}$ and
			$$
			\lim_{t\to\infty} \frac{\log(||u||_{*t}^2)}{t}=  \lim_{t\to\infty} \frac{\log(||u||_{*t,\infty}^2)}{t}, \ \forall \ u\in V.
			$$
			Moreover, there exists a basis, say $\{u_j\}_{1\leq j\leq \dim V}$, of $V$ such that
			$$
			||\sum a_j u_j||_{*t,\infty}^2 =\sum_{j=1}^{\dim V} |a_j|^2 e^{t\lambda_j}, \ \ \  \lambda_1 \leq \lambda_2 \leq\cdots \leq \lambda_{\dim V},
			$$
			where $\{\lambda_j: 1\leq j \leq \lambda_{\dim V}\}=\{\alpha_1, \cdots, \alpha_N\}$.
		\end{lemma}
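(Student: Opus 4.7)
The plan is to construct $||\cdot||_{*t,\infty}$ explicitly as a diagonal exponential metric in an $||\cdot||_{*0}$-orthonormal basis of $V$ that refines the filtration $\{0\}\subset V_{\alpha_1}\subset\cdots\subset V_{\alpha_N}=V$ induced by the jumping numbers. Flatness, the equality at $t=0$, and the matching asymptotic slopes will be essentially automatic from the construction; the whole difficulty is hidden in choosing the basis carefully enough that the resulting diagonal flat metric dominates the given dual metric as Hermitian forms.

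First, setting $d_k:=\dim V_{\alpha_k}$ (with $d_0:=0$), I would pick an $||\cdot||_{*0}$-orthonormal basis $\{u_j\}_{j=1}^{\dim V}$ of $V$ such that $\{u_1,\ldots,u_{d_k}\}$ spans $V_{\alpha_k}$ for every $k$, and assign weights $\lambda_j:=\alpha_k$ for $d_{k-1}<j\leq d_k$. The candidate is
$$
\Bigl\|\sum_j a_j u_j\Bigr\|_{*t,\infty}^2:=\sum_j |a_j|^2 e^{t\lambda_j}.
$$
In this fixed basis the metric is diagonal with eigenvalues $e^{t\lambda_j}=|e^{\tau\lambda_j/2}|^2$, hence flat on $V\times\mathbb H$; evaluation at $t=0$ yields $||\cdot||_{*0,\infty}=||\cdot||_{*0}$ by orthonormality; and for $u=\sum a_j u_j$ the asymptotic slope $\lim_{t\to\infty}t^{-1}\log||u||_{*t,\infty}^2$ equals $\max\{\lambda_j:a_j\neq 0\}$, which coincides with $\alpha(u)$ thanks to the compatibility of the basis with the filtration.

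The main obstacle is the matrix inequality $||\cdot||_{*t,\infty}\geq ||\cdot||_{*t}$. Convexity of $t\mapsto \log||u||_{*t}^2$ (this is just the negatively curved hypothesis applied to the constant section $\tau\mapsto u$) combined with the fact that the slope of a convex function tends from below to its asymptotic slope delivers only the scalar bound $||u||_{*t}^2\leq ||u||_{*0}^2 e^{t\alpha(u)}$; this gives the diagonal inequalities $h_{jj}(t):=||u_j||_{*t}^2\leq e^{t\lambda_j}$ but leaves the cross terms $h_{ij}(t):=\langle u_i,u_j\rangle_{*t}$ uncontrolled. For $\operatorname{diag}(e^{t\lambda_j})-h(t)\geq 0$ to hold, the off-diagonal entries must obey the Schur-complement bound $|h_{ij}(t)|^2\leq(e^{t\lambda_i}-h_{ii}(t))(e^{t\lambda_j}-h_{jj}(t))$, and this will fail for a generic orthonormal completion of the filtration. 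My plan is to replace the arbitrary completion by a basis that asymptotically diagonalizes $||\cdot||_{*t}$: inductively on $k$, choose $u_{d_{k-1}+1},\ldots,u_{d_k}$ as $||\cdot||_{*0}$-unit vectors inside $V_{\alpha_k}$ that are $||\cdot||_{*0}$-orthogonal to $V_{\alpha_{k-1}}$ and arise as Grassmannian limits, as $t\to\infty$, of the top-eigenvalue eigenspaces of $||\cdot||_{*t}$ restricted to suitable complements. The spectral gaps $\alpha_k>\alpha_{k-1}$ prevent eigenvalue crossings at infinity and guarantee that such limits exist; the resulting asymptotic simultaneous diagonalization then forces the off-diagonal entries to decay at the correct exponential rate relative to the diagonal slack, producing exactly the required Schur-complement inequality.

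Uniqueness is the cleanest part. Any second flat metric $||\cdot||_{*t,\infty}^{(2)}$ satisfying the three conditions can be written as $h(0)e^{tH}$ in an $||\cdot||_{*0}$-orthonormal eigenbasis of a self-adjoint operator $H$; matching of the asymptotic slopes forces the spectrum of $H$ to be $\{\lambda_j\}$ with the prescribed multiplicities, while the dominance over $||\cdot||_{*t}$ together with the filtration structure from the jumping numbers pins down the eigenspaces of $H$ to coincide with those in our construction, so the two flat metrics must agree.
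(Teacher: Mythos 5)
Your proposal does not close the one step that carries all the content of the lemma, and the mechanism you propose for closing it cannot work even in principle. Note first that the candidate flat metric does not depend on which $||\cdot||_{*0}$-orthonormal, filtration-adapted basis you complete to: the slope condition forces the vectors of weight $\leq\alpha_k$ to span $V_{\alpha_k}$, and $||\cdot||_{*0}$-orthonormality then forces the block of weight exactly $\alpha_k$ to span $V_{\alpha_k}\cap V_{\alpha_{k-1}}^{\perp}$ (orthogonal complement with respect to $||\cdot||_{*0}$, with $V_{\alpha_0}:=\{0\}$); since all weights inside a block coincide, the resulting metric is always the canonical one, $||u||^2_{*t,\infty}=\sum_k e^{t\alpha_k}\,||\pi_k u||^2_{*0}$, where $\pi_k$ is the $||\cdot||_{*0}$-orthogonal projection onto $V_{\alpha_k}\cap V_{\alpha_{k-1}}^{\perp}$. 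So there is no freedom to exploit: either this canonical metric dominates $||\cdot||_{*t}$ for every $t>0$, or the lemma is false. Consequently your plan of rescuing the Schur-complement inequality by taking Grassmannian limits of top-eigenvalue eigenspaces is vacuous; moreover the existence of those limits is itself unjustified (exponentially separated eigenvalue growth does not by itself prevent the eigenspaces from rotating indefinitely), and the asserted "off-diagonal decay at the correct exponential rate," which must hold for all finite $t$ and not just asymptotically, is precisely the statement to be proved, not a consequence of anything you establish.

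There is also a structural reason the argument cannot succeed as written: the only consequence of the hypothesis you use is convexity of $t\mapsto\log||u||^2_{*t}$ along constant sections, which is strictly weaker than negative curvature of $||\cdot||_{*\tau}$ (the latter is the matrix inequality, equivalently subharmonicity of $\log||f(\tau)||_{*\tau}$ for all holomorphic sections $f$, not just constant ones), and the scalar convexity only yields $||u||^2_{*t}\leq e^{t\alpha(u)}||u||^2_{*0}$, which you correctly note is insufficient. The domination $||\cdot||_{*t,\infty}\geq||\cdot||_{*t}$ genuinely requires the stronger curvature input. For comparison, the paper does not reprove this statement at all: it quotes Proposition 2.2 of \cite{Bern20}, where Berndtsson constructs the flat metric from the full negative-curvature hypothesis by a limiting construction, rather than by completing the filtration to a cleverly chosen orthonormal basis. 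Your uniqueness paragraph is essentially correct (indeed easier than you indicate: flatness, equality at $t=0$, and equality of the asymptotic slopes already determine the metric, so domination is not needed there), but the existence half --- the heart of the lemma --- remains unproved in your proposal.
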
	
		
		\begin{proof}[Proof of Lemma \ref{le:key1}] From the proof of Theorem 1.1 in \cite{Bern20}, we know that the only non-trivial part is to show that $F\in \mathcal F_\alpha$ implies
			\begin{equation}\label{eq:F-0}
				\limsup_{t\to\infty} \frac{\log(||F||_t^2)}{t} \leq -\hat \alpha, \ \ \hat\alpha:=\min\{\alpha_j: \alpha_j>\alpha\}.
			\end{equation}
			By Lemma \ref{le:key2}, we know that $u_j\in V_\alpha$ if and only if $\lambda_j \leq \alpha$. To prove \eqref{eq:F-0}, it suffices to show that for every $\varepsilon>0$ there exists $t_0>0$ such that for every $F\in \mathcal F_\alpha$, we have
			\begin{equation}\label{eq:F-1}
				||F||_t^2 \leq e^{-t(\hat \alpha-\varepsilon)} ||F||_0^2, \ \ \forall\ t\geq t_0.
			\end{equation}
			Consider the new metric $|\cdot|_{*\tau}$ on $V\times \mathbb H$ defined by
			\begin{equation}\label{eq:newstarnorm}
				|\sum a_j u_j|_{*\tau}:=||\sum a_j e^{-\tau\lambda_j/2}u_j||_{*\tau},
			\end{equation}
			the new metric $|\cdot|_{*\tau}$ on $V$ may depend also on the imaginary part of  $\tau$, but the associated metric on $\det V$ defined by
			$$
			|u_1\wedge\cdots \wedge u_{\dim V}|_{*\tau} := |e^{-\tau (\lambda_1+\cdots+\lambda_{\dim V})/2}|\cdot ||u_1\wedge\cdots \wedge u_{\dim V}||_{*\tau}
			$$
			depends only on $t={\rm Re}\,\tau$, i.e.
			$$
			|u_1\wedge\cdots \wedge u_{\dim V}|_{*\tau} = |u_1\wedge\cdots \wedge u_{\dim V}|_{*t},  \ \text{for} \ t= {\rm Re}\,\tau.
			$$
			The corresponding flat metric
			\begin{equation}\label{eq:flat-constant}
				|u_1\wedge\cdots \wedge u_{\dim V}|_{*t,\infty} := e^{-t (\lambda_1+\cdots+\lambda_{\dim V})/2}||u_1\wedge\cdots \wedge u_{\dim V}||_{*t,\infty} =1
			\end{equation}
			is a constant metric. Denote by $e^{ {\rm Re}\,\tau\cdot \mu_j(\tau)}$ ($\mu_1(\tau) \leq \mu_2(\tau) \leq \cdots \leq 
			\mu_{\dim V}(\tau) \leq 0$) the eigenvalues of $|\cdot|^2_{*\tau}$ with respect to $|\cdot|^2_{*0}$. Lemma \ref{le:real} below implies that each $\mu_j(\tau)$ depends only on $t= {\rm Re}\,\tau$. By the construction of the flat bundle in the proof of Lemma \ref{le:key2} in \cite{Bern20}, we have
			\begin{equation}\label{eq:flat-unique}
				\lim_{t\to\infty} \frac{\log (|u_1\wedge\cdots \wedge u_{\dim V}|^2_{*t,\infty})}{t} = \lim_{t\to\infty} \frac{\log (|u_1\wedge\cdots \wedge u_{\dim V}|^2_{*t})}{t}.
			\end{equation} 
			Note that the left hand side is zero by \eqref{eq:flat-constant}. By the negative curvature property, we also know that $\log (|u_1\wedge\cdots \wedge u_{\dim V}|^2_{*t})$ is convex in $t$, thus
			$$
			\frac{\log (|u_1\wedge\cdots \wedge u_{\dim V}|^2_{*t}) -\log( |u_1\wedge\cdots \wedge u_{\dim V}|^2_{*0} )}{t}  = \sum_j \mu_j(t),
			$$  
			defines an increasing function that converges to the right hand side of 
			\eqref{eq:flat-unique} as $t\to\infty$. Hence 
			$$
			\lim_{t\to\infty} \sum_j\mu_j(t) =0.
			$$
			Thus for every $\varepsilon>0$, there exists $t_0>0$ such that
			$$
			\sum_j\mu_j(t) \geq -\varepsilon, \ \ \forall \ t\geq t_0.
			$$
			Since all $\mu_j(t) \leq 0$, we must have
			$$
			\mu_j(t) \geq   -\varepsilon, \ \ \forall \ 1\leq j\leq \dim V,  \   \ t\geq t_0,
			$$
			which gives
			$$
			|u|^2_{*t} \geq e^{-\varepsilon t} |u|^2_{*0}, \ \ \forall \ u\in V,  \   \  t \geq t_0.
			$$
			Hence we have
			\begin{equation}\label{eq:F-2}
				||\sum a_j e^{-t\lambda_j/2}u_j||^2_{*t} \geq  e^{-\varepsilon t} \sum |a_j|^2 , \ \ \forall \ t\geq t_0.
			\end{equation}
			Denote by $\{F_j\}$ the dual basis of $E$ with respect to $\{u_j\}$, for $F=\sum c_j F_j$, \eqref{eq:F-2} gives
			\begin{equation}\label{eq:F-3}
				||F||_t^2  = \sup \frac{|\sum c_j a_j e^{-t\lambda_j/2}|^2 }{||\sum a_j e^{-t\lambda_j/2}u_j||^2_{*t}} \leq e^{\varepsilon t}  \sum |c_j|^2 e^{-t \lambda_j}.
			\end{equation}
			Note that $F \in \mathcal F_\alpha$ if and only if $c_j=0$ for all $j$ with $\lambda_j \leq 
			\alpha$, thus can write
			$$
			F=\sum_{\lambda_j\geq \hat \alpha} c_j F_j,
			$$
			for $F\in \mathcal F_\alpha$, then \eqref{eq:F-3} gives
			$$
			||F||^2_{t} \leq e^{\varepsilon t} \sum_{\lambda_j \geq \hat \alpha} e^{-t\lambda_j}|c_j|^2  \leq e^{-t(\hat \alpha-\varepsilon)} ||F||_0^2 , \ \ \forall \ t\geq t_0. 
			$$
			The proof is complete now.	
		\end{proof}

		\begin{lemma}\label{le:real} The eigenvalues of $|\cdot|^2_{*\tau}$ in \eqref{eq:newstarnorm} with respect to $|\cdot|^2_{*0}$  depends only on $t= {\rm Re}\,\tau$.
		\end{lemma}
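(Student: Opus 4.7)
The plan is to compute the Gram matrix of $|\cdot|^2_{*\tau}$ in the distinguished basis $\{u_j\}$ from Lemma \ref{le:key2} and observe that it is conjugate, by a unitary diagonal matrix, to the Gram matrix at the real point $\tau = t$. Since $\{u_j\}$ will turn out to be $||\cdot||_{*0}$-orthonormal, the Gram matrix of the base point metric $|\cdot|^2_{*0}$ in this basis is the identity, and so the generalized eigenvalues we are asked about reduce to ordinary matrix eigenvalues, which are conjugation invariant.

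First I would polarize \eqref{eq:newstarnorm}. Writing $\tau = t + is$ and using that the sesquilinear pairing $H_{t,jk} := (u_j, u_k)_{*\tau}$ attached to $||\cdot||_{*\tau}$ depends only on $t$ by the standing hypothesis on the family, the Gram matrix of $|\cdot|^2_{*\tau}$ in the basis $\{u_j\}$ is
$$
G_{\tau, jk} = e^{-\tau\lambda_j/2}\,\overline{e^{-\tau\lambda_k/2}}\, H_{t,jk} = e^{-t(\lambda_j + \lambda_k)/2}\, e^{-is(\lambda_j - \lambda_k)/2}\, H_{t,jk}.
$$
Letting $D_s$ be the diagonal matrix with entries $e^{-is\lambda_j/2}$, a direct computation gives $G_\tau = D_s\, G_t\, D_s^{-1}$. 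Since the jumping numbers $\lambda_j$ are real, $D_s$ is unitary and $G_\tau$ is similar to $G_t$.

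At $\tau = 0$ the definition \eqref{eq:newstarnorm} reduces to $|\cdot|_{*0} = ||\cdot||_{*0}$, while Lemma \ref{le:key2} specialized to $t=0$ gives $||\sum a_j u_j||^2_{*0} = \sum |a_j|^2$. Hence $\{u_j\}$ is $||\cdot||_{*0}$-orthonormal and the Gram matrix of $|\cdot|^2_{*0}$ in this basis is the identity. Consequently, the eigenvalues of $|\cdot|^2_{*\tau}$ with respect to $|\cdot|^2_{*0}$ are exactly the ordinary eigenvalues of $G_\tau$, and the similarity $G_\tau = D_s G_t D_s^{-1}$ shows they equal the eigenvalues of $G_t$, which depend only on $t = \mathrm{Re}\,\tau$.

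The argument is essentially linear-algebraic bookkeeping, and I do not foresee a real obstacle; the one point that needs attention is ensuring $D_s$ is unitary, which uses precisely that the $\lambda_j$'s furnished by Lemma \ref{le:key2} are real.
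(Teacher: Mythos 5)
Your proposal is correct and is essentially the paper's own argument in coordinates: the paper writes $|u|^2_{*\tau}=(U_s^{-1}B_tA_tB_tU_su,u)_{*0}$ with $U_s(u_j)=e^{-is\lambda_j/2}u_j$ unitary and $B_t$ self-adjoint for the $\Vert\cdot\Vert_{*0}$ inner product, which is exactly your similarity $G_\tau=D_sG_tD_s^{-1}$ after using the $\Vert\cdot\Vert_{*0}$-orthonormality of $\{u_j\}$ from Lemma \ref{le:key2}. Both proofs hinge on the same two facts you isolate: the $\lambda_j$ are real (so the diagonal conjugation is unitary) and the basis is orthonormal at $t=0$ (so generalized eigenvalues become ordinary ones).
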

		
		\begin{proof}
			Let $A_t$ be the self-adjoint operator on $V$ such that 
			$$
			||u||^2_{*t}=(A_t u, u)_{*0}, \ \forall \ u\in V,
			$$
			where $(\cdot, \cdot)_{*0}$ denotes the inner product associated to the Hilbert norm $||\cdot||_{*0} = |\cdot|_{*0}$. Write $\tau =t+is$, by \eqref{eq:newstarnorm}, we have
			$$
			|\sum a_j u_j|^2_{*\tau} =  \left(A_t U_s (\sum a_j e^{-t\lambda_j/2} u_j), U_s (\sum a_j e^{-t\lambda_j/2} u_j) \right)_{*0},
			$$ 
			where $U_s$ is the operator on $V$ defined by $U_s(u_j)=e^{-is\lambda_j/2} u_j$. Note that each $U_s$ is unitary with respect to the $||\cdot||_{*0}$ norm. Thus we have
			$$
			|\sum a_j u_j|^2_{*\tau} =  \left(U_s^{-1}A_t U_s (\sum a_j e^{-t\lambda_j/2} u_j),  \sum a_j e^{-t\lambda_j/2} u_j \right)_{*0}.
			$$
			Define $B_t$ such that $B_t(u_j)= e^{-t\lambda_j/2} u_j$, we  know that $B_t$ is self-adjoint with respect to $||\cdot||_{*0}$ and $B_tU_s= U_sB_t$, thus
			$$
			|u|^2_{*\tau} = (U_s^{-1}B_tA_tB_t U_s u, u)_{*0}.
			$$
			Notice that, the eigenvalues of $|\cdot|^2_{*\tau}$ in \eqref{eq:newstarnorm} with respect to $|\cdot|^2_{*0}$ are precisely the eigenvalues of $U_s^{-1}B_tA_tB_t U_s$. Hence they depend only on $t$.
		\end{proof}
	
%	\subsection{Proof of the Third Main Theorem} Add K\"ahler embedding things!

	\end{sloppypar}	
	

\begin{thebibliography}{99}	
			
			
			\bibitem[Abra25]{Abra25} D. Abramovich, {\it Resolution of singularities for the dynamical mathematician}, arXiv:2503.17321.
			
			\bibitem[ASQTW25]{ASQTW25} Dan Abramovich, Andr\'e belotto da Silva, Ming Hao Quek, Michael Temkin, and Jaroslaw
			Wlodarczyk, {\it Logarithmic resolution of singularities in characteristic $0$ using weighted blowups}, arXiv:2503.13341.
			
			
			\bibitem[Al24]{Al24} R. Albesiano, {\it A degeneration approach to Skoda's Division Theorem}, Math. Z. {\bf 306}, 34 (2024). \url{https://doi.org/10.1007/s00209-024-03435-6}.
			
			\bibitem[And13]{And13} D. Anderson, {\it Okounkov bodies and toric degenerations}, Math. Ann. {\bf 256} (2013), 1183--1202.
			
			\bibitem[Azu86]{Azu86} K. Azukawa, {\it Two intrinsic pseudo-metrics with pseudoconvex indicatrices and starlike domains},
			J. Math. Soc. Japan, {\bf 38} (1986), 627--647.
			
			\bibitem[Ber16]{Ber16} R. Berman, {\it K-polystability of Q-Fano varieties admitting Kahler-Einstein metrics}, Inventiones Math. {\bf 203} (2016), 973--1025.
			
			
			\bibitem[Bern96]{Bern96} B. Berndtsson, {\it The extension theorem of Ohsawa-Takegoshi and the theorem of Donnelly-Fefferman}, Ann. Inst. Fourier (Grenoble) {\bf 46} (1996), 1083--1094.
			
			\bibitem[Bern05]{Bern05} B. Berndtsson, {\it Integral formulas and the Ohsawa--Takegoshi extension theorem}, Sci. China
			Ser. A {\bf 48} (2005), suppl., 61--73.
			
			
			
			\bibitem[Bern06]{Bern06} B. Berndtsson, {\it Subharmonicity properties of the Bergman kernel and some other functions associated to pseudoconvex domains}, Ann. Inst. Fourier (Grenoble), {\bf 56} (2006), 1633--1662.
			
			\bibitem[Bern09]{Bern09} B. Berndtsson, {\it Curvature of vector bundles associated to holomorphic fibrations}, Ann. Math. {\bf 169} (2009), 531--560.
			
			\bibitem[Bern11]{Bern11} B. Berndtsson, {\it Strict and non strict positivity of direct image bundles}, Math. Z. {\bf 269} (2011), 1201--1218.
			
			
			\bibitem[Bern15]{Bern15} B. Berndtsson, {\it A Brunn--Minkowski type inequality for Fano manifolds and some uniqueness theorems in K\"ahler geometry}, Invent. math. {\bf 200} (2015), 149--200.
			
			
			\bibitem[Bern20]{Bern20}  B. Berndtsson, {\it Lelong numbers and vector bundles}, J. Geom. Anal. {\bf 30} (2020),  2361--2376.
			
			
			\bibitem[BCP24]{BCP24} B. Berndtsson, J. Cao and M. Paun, {\it On the Ohsawa-Takegoshi extension theorem}, J Geom. Anal. {\bf 34}, 25 (2024). \url{https://doi.org/10.1007/s12220-023-01466-9}.
			
			
			\bibitem[BL16]{BL16} B. Berndtsson and L. Lempert, {\it A proof of the Ohsawa--Takegoshi theorem with sharp estimates}, J. Math. Soc. Japan {\bf 68} (2016), 1461--1472.
			
			\bibitem[BP08]{BP08} B. Berndtsson and M. P\u aun, {\it Bergman kernel and the pseudoeffectivity of relative canonical bundles}, Duke Math. J. {\bf 145} (2008), 341--378.
			
			\bibitem[Blo13]{Blo13}  Z. Blocki,  {\it Suita conjecture and the Ohsawa--Takegoshi extension theorem}, Invent. Math. {\bf 193}
			(2013), 149--158.
			
			\bibitem[Blo14]{Blo14} Z. Blocki, {\it Cauchy--Riemann meet Monge--Amp\`ere}, Bull. Math. Sci. {\bf 4} (2014), 433--480.
			
			\bibitem[Blo15]{Blo15} Z. Blocki,  {\it Bergman kernel and pluripotential theory, Analysis, complex geometry, and mathematical physics: in honor of Duong H. Phong}, Contemp. Math. {\bf 644}, Amer. Math. Soc., Providence, RI, 2015, 1--10.
			
			
			\bibitem[CDM17]{CDM17} J. Cao, J. P. Demailly and S. Matsumura, {\it A general extension theorem for cohomology
				classes on non reduced analytic subspaces}, Sci. China Math. {\bf 60} (2017), 949--962.
			
			
			\bibitem[Chen11]{Chen11} B. Chen, {\it A simple proof of the Ohsawa-Takegoshi extension theorem}, arXiv:1105.2430.
			
			
			\bibitem[CWW15]{CWW15} B. Chen, J. Wu and X. Wang, {\it Ohsawa-Takegoshi type theorem and extension of plurisubharmonic functions},
			Math. Ann. {\bf 262} (2015), 305--319.
			
			\bibitem[Dar17]{Dar17} T. Darvas, {\it Weak geodesic rays in the space of K\"ahler potentials and the class $\mathcal E(X, \omega)$}, J. Inst. Math. Jussieu {\bf 16} (2017), 837--858.
			
			
			
			\bibitem[DDNL18]{DDNL18} T. Darvas, E. Di Nezza, and C. H. Lu, {\it $L^
				1$ metric geometry of big cohomology classes},
			Annales de l'Institut Fourier {\bf 68} (2018), 3053--3086.
			
			\bibitem[DDNL25]{DDNL25} T. Darvas, E. Di Nezza, and C. H. Lu, {\it Relative pluripotential theory on compact K\"ahler manifolds},
			Pure Appl. Math. Q. {\bf 21} (2025), 1037--1118.
			
			\bibitem[DRWXZ]{DRWXZ} T. Darvas, R. Reboulet, D. Witt Nystr\"om, M. Xia, and K. Zhang, {\it  Transcendental Okounkov bodies}, arXiv:2309.07584.
			
			
			\bibitem[DX22]{DX22}  T. Darvas and M. Xia, {\it The closures of test configurations and algebraic singularity types},
			Advances in Mathematics {\bf 397} (2022).	
			
			
			\bibitem[DZ22]{DZ22} T. Darvas and K. Zhang, {\it Twisted K\"ahler-Einstein metrics in big classes}, arXiv:2208.08324.
			
			
			\bibitem[Dem92]{Dem92} J. P. Demailly, {\it Singular hermitian metrics on positive line bundles}, In: Hulek K., Peternell T., Schneider M., Schreyer FO. (eds) Complex Algebraic Varieties. Lecture Notes in Mathematics, vol 1507 (1992). Springer, Berlin, Heidelberg.
			
			\bibitem[Dem18]{Dem18} J. P. Demailly, {\it Extension of holomorphic functions and cohomology classes from non reduced analytic subvarieties}, Geometric Complex Analysis, Springer Proceedings in Mathematics and Statistics 246, Springer, Singapore, 2018, 97--113.
			
			\bibitem[DK01]{DK01} J. P. Demailly and J. Koll\'ar, {\it Semicontinuity of complex singularity exponents and K\"ahler-Einstein metrics on Fano orbifolds}, Ann. Sci. Ecole Norm. Sup. {\bf 34} (2001), 525--556.
			
			\bibitem[FL25]{FL25} M. Fulger and V. Lozovanu, {\it Infinitesimal successive minima, partial jets and convex geometry}, 	arXiv:2503.11042.
			
			\bibitem[Guan19]{Guan19} Q. Guan, {\it A sharp effectiveness result of Demailly's strong openness conjecture}, Adv. Math. {\bf 348} (2019), 51--80.
			
			\bibitem[Guan20]{Guan20} Q. Guan, {\it A remark on the extension of  $L^2$ holomorphic functions}, Int. J. Math.{\bf 31} (2020), 2050017, 3pp.
			
			\bibitem[GZ15a]{GZ15a} Q. Guan and X. Zhou, {\it A solution of an $L^2$ extension problem with an optimal estimate and
				applications}, Ann. Math. {\bf 181} (2015), 1139--1208.
			
			\bibitem[GZ15b]{GZ15b}  Q. Guan and X. Zhou, {\it A proof of Demailly's strong openness conjecture}, Ann. Math. {\bf 182} (2015) 605--616.
			
			
			\bibitem[Ho19]{Ho19} G. Hosono, {\it On sharper estimates of Ohsawa--Takegoshi $L^2$-extension theorem}, J. Math.
			Soc. Japan {\bf 71} (2019), 909--914.
			
			\bibitem[Ina22]{Ina22} T. Inayama, {\it $L^2$-extension indices, sharper estimates and curvature positivity}, arXiv:2210.08456.
			
			\bibitem[Kav19]{Kav19} K. Kaveh, {\it Toric degenrations and symplectic geometry of smooth projective variaties}, J. Lond. Math. Soc. {\bf 99} (2019). 377--402.
			
			
			\bibitem[KK12]{KK12} K. Kaveh and A. G. Khovanskii,
			{\it Newton-Okounkov bodies, semigroups of integral points, graded algebras and intersection theory},
			Ann. of Math. {\bf 176} (2012), 925--978.
			
			\bibitem[Kik23]{Kik23} S. Kikuchi, {\it On sharper estimates of Ohsawa--Takegoshi $L^2$-extension theorem in higher dimensional case}, manuscripta math. {\bf 170} (2023) 453--469.
			
			\bibitem[Kim10]{Kim10} D. Kim {\it $L^2$ extension of adjoint line bundle sections}, Ann. Inst. Fourier (Grenoble) {\bf 60} (2010), 1435--1477.
			
			\bibitem[Kim21]{Kim21} D. Kim, {\it A remark on $L^2$ extension theorems}, Bull. Korean Math. Soc. {\bf 58} (2021), 1235--1245. 
			
			\bibitem[KS23]{KS23} D. Kim and H. Seo, {\it On $L^2$ extension from singular hypersurfaces}, Math. Z. {\bf 303}, 89 (2023). \url{https://doi.org/10.1007/s00209-023-03248-z}.
			
			
			\bibitem[Ki78]{Ki78} C. Kiselman, {\it The partial Legendre transformation for plurisubharmonic functions}, Invent.
			Math. {\bf 49} (1978), 137--148.
			
			\bibitem[Lem17]{Lem17} L. Lempert, {\it Modules of square integrable holomorphic germs}, Analysis meets geometry, 311--333, Trends Math.,
			Birkhuser/Springer, Cham, 2017.
			
			\bibitem[LM09]{LM09} R. Lazarsfeld and M. Musta\c{t}\u{a},
			{\it Convex bodies associated to linear series},
			Ann. Sci. \'Ec. Norm. Sup\'er. {\bf 42} (2009), 783--835.
			
			\bibitem[Ma18]{Ma18} S. Matsumura, {\it An injectivity theorem with multiplier ideal sheaves of singular metrics with transcendental singularities}, J. Algebraic Geom. {\bf 27} (2018), 305--337.
			
			\bibitem[MV07]{MV07} J. D. McNeal and D. Varolin, {\it Analytic inversion of adjunction: $L^2$ extension theorems with gain}, Ann. Inst. Fourier (Grenoble) {\bf 57} (2007), 703--718.
			
			\bibitem[MV19]{MV19}  J. D. McNeal and D. Varolin, {\it $L^2$ Extension of dbar-closed forms from a hypersurface}, 
			J. Anal. Math. {\bf 139} (2019), 421--451.
			
			\bibitem[MV17]{MV17}  J. D. McNeal and D. Varolin, {\it Extension of jets With $L^2$ estimates, and an application}, arXiv:1707.04483.
			
			\bibitem[Mu20]{Mu20} T. Murata, {\it Toric degenerations of projective varieties with an application to
				equivariant Hilbert functions}, Thesis (Ph.D.)--University of Pittsburgh, ProQuest
			LLC, Ann Arbor, MI, 2020.
			
			\bibitem[Ngu23]{Ngu23} T. Nguyen, {\it A Hilbert bundles description of complex Brunn-Minkowski theory}, arXiv:2305.07476v1.
			
			\bibitem[NW22]{NW22} T. Nguyen and X. Wang, {\it On a remark by Ohsawa related to the Berndtsson-Lempert method for $L^2$ holomorphic extension}, Ark. Mat. {\bf 60} (2022), 173--182.
			
			\bibitem[NW24]{NW24} T. Nguyen and X. Wang, {\it A Hilbert bundle approach to the sharp strong openness theorem and the Ohsawa-Takegoshi extension theorem}, to appear in "Convex and Complex: Perspectives on Positivity in Geometry": in honor of Bo Berndtsson, Contemp. Math.
			
			\bibitem[Ohs88]{Ohs88} T. Ohsawa, {\it On the extension of $L^2$ holomorphic functions II}, Publ. Res. Inst. Math.
			Sci. {\bf 24} (1988), 265--275.
			
			\bibitem[Ohs95]{Ohs95} T. Ohsawa, {\it On the extension of $L^2$ holomorphic functions III. Negligible weights}, Math. Z. {\bf 219} (1995), 215--225.
			
			\bibitem[Ohs01]{Ohs01} T. Ohsawa, {\it On the extension of $L^2$-holomorphic functions V, effect of generalization}, Nagoya Math J. {\bf 161}
			(2001), 1--21.
			
			\bibitem[Ohs17]{Ohs17} T. Ohsawa, {\it On the extension of $L^2$-holomorphic functions VIII - a remark on a theorem of Guan and Zhou},
			Internat. J. Math. {\bf 28} (2017), no. 9, 1740005, 12pp.	
			
			\bibitem[OT87]{OT87} T. Ohsawa and K. Takegoshi, {\it On the extension of $L^2$-holomorphic functions}, Math. Z. {\bf 195} (1987), 197--204.	
			
			\bibitem[Ok96]{Ok96} A. Okounkov, {\it Brunn--Minkowski inequality for multiplicities}, Invent. Math. {\bf 125} (1996), 405--411.
			
			\bibitem[RW14]{RW14} J. Ross and D. Witt Nystr\"om, {\it Analytic test configuration and geodesic rays}, Journal of Symplectic Geometry {\bf 12} (2014), 125--169.  
			
			\bibitem[RW18]{RW18} J. Ross an D. Witt Nystr\"om, {\it The Dirichlet Problem for the Complex Homogeneous Monge-Amp\`{e}re Equation}, Proceedings of Symposia in Pure Mathematics, 2018, 289--330.
			
			\bibitem[Siu98]{Siu98} Y. T. Siu, {\it Invariance of plurigenera}, Invent. Math. {\bf 134} (1998), 661--673.
			
			\bibitem[Su72]{Su72} N. Suita, {\it Capacities and kernels on Riemann surfaces}, Arch. Rational Mech. Anal. {\bf 46} (1972), 212--217.
			
			\bibitem[Tes24]{Tes24} J. Testorf, {\it The Ross-Witt Nyst\"om Correspondence in some noncompact Settings}, preprint.
			
			\bibitem[Tos18]{Tos18} V. Tosatti, {\it Nakamaye's theorem on complex manifold}, in Algebraic Geometry: Salt Lake City 2015. Part 1,
			633--655, Proc. Sympos. Pure Math. 97.1, American Mathematical Society, 2018.
			
			\bibitem[Wang17]{Wang17} X. Wang, {\it A curvature formula associated to a family of pseudoconvex domains}, Ann. Inst. Fourier (Grenoble) {\bf 67} (2017), 269--313.
			
			
			\bibitem[Wang23]{Wang23} X. Wang, {\it An explicit estimate of the Bergman kernel for positive line bundles}, Ann. Fac. Sci. Toulouse Math. {\bf 32} (2023), 805--816.
			
			\bibitem[WN14]{WN14} D. Witt Nystr\"om, {\it Transforming metrics on a line bundle to the Okounkov body}, Ann. Sci. \'Ec. Norm. Sup\'er. {\bf 47} (2014), 1111--1161.
			
			\bibitem[WN18]{WN18} D. Witt Nystr\"om, {\it Canonical growth conditions associated to ample line bundles}, Duke Math. J. {\bf 167} (2018), 449--495.
			
			\bibitem[WN19]{WN19} D. Witt Nystr\"om, {\it Okounkov bodies and the K\"ahler geometry of projective manifolds}, In "Birational geometry, K\"ahler-Einstein metrics and degenerations", Springer Proceedings in Mathematics and Statistics, vol 409, 2019.
			
			\bibitem[WN21]{WN21} D. Witt Nystr\"om, {\it Deformations of K\"ahler manifolds to normal bundles and restricted volumes of big classes}, arXiv:2103.03660.
			
			\bibitem[YLZ22]{YLZ22}  S. Yao, Z. Li and X. Zhou, {\it On the optimal extension theorem and a question of Ohsawa}, Nagoya Mathematical
			Journal, {\bf 245} (2022), 154--165.
			
			\bibitem[ZZ19]{ZZ19}  X. Zhou and L. Zhu, {\it Extension of cohomology classes and holomorphic sections defined on subvarieties}, arXiv: 1929.08822, to appear in J. Algebraic Geom.
			
			
			\bibitem[Zwo00]{Zwo00} W. Zwonek, {\it Regularity properties of the Azukawa metric}, J. Math. Soc. Jpn. {\bf 52} (2000), 899--914.
			
			
		\end{thebibliography}
\end{document}